\numberwithin{equation}{section}
\newtheorem{theorem}{Theorem}[section]
\newtheorem{lemma}[theorem]{Lemma}
\newtheorem{prop}[theorem]{Proposition}
\newtheorem{definition}[theorem]{Definition}
\newtheorem{remark}[theorem]{Remark}
\newcommand{\eps}{\varepsilon}
\newcommand{\cP}{{\ensuremath{\mathcal P}} }
\newcommand{\bbE}{{\ensuremath{\mathbb E}} }
\newcommand{\E}{{\ensuremath{\mathbb E}} }
\newcommand{\N}{{\ensuremath{\mathbb N}} }
\newcommand{\bbP}{{\ensuremath{\mathbb P}} }
\newcommand{\R}{{\ensuremath{\mathbb R}} }
\newcommand\I{\operatorname{I}}
\renewcommand\H{\operatorname{H}}
\newcommand{\mc}[1]{{\mathcal #1}}
\newcommand{\mb}[1]{{\mathbf #1}}
\newcommand{\bb}[1]{{\mathbb #1}}
\newfont{\indic}{bbmss12}
\def\un#1{\hbox{{\indic 1}$_{#1}$}}
\title[Large deviations for renewal processes]
      {Large deviations for renewal processes}
\author[R.\ Lefevere]{Rapha\"el Lefevere}
\address{Laboratoire de Probabilit\'es
 et Mod\`eles Al\'eatoires (CNRS UMR 7599), Universit\'e Paris 7
 -- Denis Diderot, UFR Math\'ematiques, Case 7012, B\^atiment
 Chevaleret, 75205 Paris Cedex 13, France}
\email{lefevere\@@math.jussieu.fr}
\author[M.\ Mariani]{Mauro Mariani}
\address{Laboratoire d'Analyse,
 Topologie, Probabilit\'es (CNRS UMR 6632), Universit\'e Aix-Marseille,
 Facult\'e des Sciences et Techniques
 Saint-J\'er\^ome, Avenue Escadrille Normandie-Niemen 13397 Marseille
 Cedex 20, France}
\email{mariani\@@cmi.univ-mrs.fr}
\author[L. Zambotti]{Lorenzo Zambotti}
\address{Laboratoire de Probabilit{\'e}s
 et Mod\`eles Al\'eatoires (CNRS UMR. 7599)
 Universit\'e Paris 6 -- Pierre et Marie Curie,
 U.F.R. Math\'ematiques, Case 188, 4 place
 Jussieu, 75252 Paris cedex 05, France }
\email{lorenzo.zambotti\@@upmc.fr}
\begin{document}

\begin{abstract}
  We investigate large deviations for the empirical measure of the
  forward and backward recurrence time processes associated with a classical
  renewal process with arbitrary waiting-time distribution. The Donsker-Varadhan theory cannot be applied in this case, and indeed it turns 
out that the large deviations rate functional differs from the one suggested by such a theory. In particular, a non-strictly convex and non-analytic rate functional is obtained. 
\end{abstract}

\keywords{Large Deviations; Renewal Process; Cumulative Process; Heavy Tails}

\subjclass[2000]{60F10, 60K05}

\maketitle

\section{Introduction}
\subsection{Motivations from Statistical Physics}
In large deviations theory, the appearance of rate functionals with singular points (that is, points of non-differentiabi\-lity or non-analiticity) is a 
feature marking the existence of critical phenomena in the underlying stochastic processes. Existence of such singularities is of particular interest in a number of situations, for instance  whenever these functionals are associated with deviations of physical quantities in Statistical Mechanics models, as they identify phase transitions. Moreover, values of the parameters in which deviations functionals are convex, or affine, or non-convex are related to different behaviors of the system.

In this respect, this work has been initially motivated by the appearance of affine stretches in large deviations rate 
functionals of Statistical Mechanics models, whose dynamics depends on renewal processes. In \cite{LMZ1} a heat conduction model is introduced, and it is shown that the rate functional of the energy current is convex but not strictly convex, with an affine behavior over two distinct intervals, from which the  appearance of critical points. In these conditions, the classical G\"artner-Ellis Theorem does not yield the full large deviations principle and a more detailed understanding of the random dynamics is necessary. 

In this paper we do not pursue this Statistical Mechanics interpretation, but rather show how affine stretches in large 
deviations rate functionals of renewal processes arise when the inter-arrival times
have heavy tails. We argue that in such situations the Donsker-Varadhan approach \cite{DV} does not yield a good rate functional and therefore the classical framework must be modified.

Before detailing the main result, we recall an example concerning large deviations of the renewal cumulative process, with the aim to 
underline that our Theorem~\ref{t:ld1} below may have interesting consequences not related to Statistical Mechanics.

\subsection{A motivating example}  Suppose a sequence of tasks $i=1,\,2\ldots$ is given, where the task $i$ takes a \emph{service time} 
$\tau_i$ to be accomplished. If the reward paid for executing such a task $i$ is function $F(\tau_i)$ of the time elapsed to accomplish it, then 
the total amount $C_t$ gained at time $t>0$ is
\begin{equation}
\label{Ct}
C_t:=\sum_{i=1}^{N_t-1} F(\tau_i), \qquad t>0,
\end{equation}
where 
\begin{equation*}
N_t:=\inf \left\{ n\geq 0\,:\: \sum_{i=1}^n \tau_i >t \right\},
\end{equation*}
and $C_t=0$ if $N_t=1$. When the service times $\tau_i$ are random, the study of the {\it cumulative} process $(C_t)_{t\geq 0}$ can be of 
interest in many applications, for instance queueing and risk theory. 

We assume throughout the paper that  the sequence $(\tau_i)_{i\ge 1}$ is an i.i.d.\ sequence of positive random variables and that $F \colon ]0,+\infty[\mapsto[0,+\infty[$ is bounded and continuous. The law of $\tau_i$ is an arbitrary probability measure $\psi$ on $]0,+\infty[$, without any 
moment assumption. Then $N_t$ is a so called renewal counting process and it is easily seen that a.s.\
\begin{equation*}
\lim_{t\to+\infty} \frac{C_t}t = \lim_{t\to+\infty}  \frac{N_t-1}t \,  \frac1{N_t-1}
\sum_{i=1}^{N_t-1} F(\tau_i) = \frac{\E(F(\tau_1))}{\E(\tau_1)}\in[0,+\infty[.
\end{equation*}
This is therefore the total cost per unit of time on a large time interval. A natural question, especially in the interpretations provided above, is 
the study of large deviations for the mean payoff $C_t/t$ as $t \to +\infty$.

Define $\Lambda^* \colon [0,+\infty[^2\, \mapsto [0,+\infty]$, the
Legendre transform of the map $\Lambda(x,y) :=\log \psi(e^{x \tau + yF})$, namely
\begin{equation}
\label{Lambda_F}
\Lambda^*(a,b) := \sup_{x,y\in\R} \big\{ ax+by -
      \log \psi\left(e^{x \tau + y F}\right) \big\}, \qquad a,b\geq 0.
\end{equation}
 A first result obtained as a consequence of the theory developed below is
\begin{theorem}
\label{t:ldcounting}
The law of the random variable $C_t/t$ 
defined by \eqref{Ct} satisfies a large deviations principle with good rate functional $J_{F}\colon
 [0,+\infty[ \to [0,+\infty]$ defined as
 \begin{equation}
 \label{e:IN}
 J_F(m):= \inf\left\{ \beta \, \Lambda^*(1/\beta,m/\beta)\,:\:
 \, \beta>0\right\}
\end{equation}
i.e.\ for each closed set $\mc C \subset \R$
\begin{equation*}
\varlimsup_{t\to +\infty} \frac1t \, \log \bbP\left(C_t/t\in \mc C\right)
  \le - \inf_{\mc C} J_F
\end{equation*}
and for each open set $\mc O \subset \R$
\begin{equation*}
\varliminf_{t\to +\infty} \frac1t \, \log \bbP\left(C_t/t\in \mc O\right)
  \ge -\inf_{\mc O} J_F.
\end{equation*}
\end{theorem}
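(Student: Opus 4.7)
The plan is to derive Theorem~\ref{t:ldcounting} from Theorem~\ref{t:ld1} by the contraction principle. Writing $A_s := s - \sum_{i=1}^{N_s-1}\tau_i$ and $B_s := \sum_{i=1}^{N_s}\tau_i - s$ for the backward and forward recurrence times, the empirical measure $\pi_t := \tfrac 1 t \int_0^t \delta_{(A_s,B_s)}\,ds$ is precisely the object for which Theorem~\ref{t:ld1} provides a large deviations principle with some good rate functional $\mc I$. Since on each interval $[S_{i-1},S_i]$ one has $A_s + B_s = \tau_i$ identically, a direct calculation gives
$$
\frac{C_t}{t} = \int \varphi\,d\pi_t + o(1), \qquad \varphi(a,b):=\frac{F(a+b)}{a+b},
$$
the $o(1)$ coming only from the incomplete last interval, which contributes at most $\|F\|_\infty/t$. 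Applying the contraction principle to $\pi \mapsto \int \varphi\,d\pi$ then yields the LDP for $C_t/t$ with rate function $J_F(m)=\inf\{\mc I(\pi):\int\varphi\,d\pi=m\}$.

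The main computation is to identify this infimum with \eqref{e:IN}. The natural parametrization of $\pi$ is by a mass $\beta>0$ (the asymptotic rate of renewals per unit time) together with a probability measure $\nu$ on $]0,+\infty[$ (the typical inter-arrival distribution along the deviant trajectory), via $\pi(da,db)=\beta\,\mb 1_{\{0\leq a\leq \tau\}}\,da\,\nu(d\tau)$ with normalization $\beta\int\tau\,d\nu=1$. The rate function $\mc I$ is expected to decompose as $\beta\,\H(\nu\mid\psi)$, the weight $\beta$ reflecting that the cost accumulates per renewal and the entropy measuring the deviation of the shape $\nu$ from $\psi$. Since $\int\varphi\,d\pi=\beta\int F\,d\nu$, the two constraints read $\int\tau\,d\nu=1/\beta$ and $\int F\,d\nu=m/\beta$; the infimum of $\H(\nu\mid\psi)$ under these two linear constraints is, by Cramér's theorem applied to the i.i.d.\ sequence $(\tau_i,F(\tau_i))$ under $\psi$, exactly $\Lambda^*(1/\beta,m/\beta)$. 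Optimizing $\beta\,\Lambda^*(1/\beta,m/\beta)$ over $\beta>0$ yields \eqref{e:IN}.

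The principal obstacle is that $\varphi$ blows up at $a+b=0$, so the contraction principle does not apply directly in the natural weak topology on $\pi_t$. I would handle this by truncation: approximate $\varphi$ by $\varphi_\eps(a,b):=\varphi(a,b)\,\chi_\eps(a+b)$, where $\chi_\eps$ is continuous, equal to $1$ on $[\eps,+\infty[$ and vanishing near $0$, derive the LDP for $\int\varphi_\eps\,d\pi_t$ by continuity, and then let $\eps\to 0$. The probabilistic error $\int(\varphi-\varphi_\eps)\,d\pi_t$ is bounded by $\|F\|_\infty\int_0^\eps(1/\tau)\,\mu_t^\tau(d\tau)$, where $\mu_t^\tau$ is the pushforward of $\pi_t$ by $(a,b)\mapsto a+b$; the structural identity $\mu_t^\tau(d\tau)=\tau\,\nu_t(d\tau)/\int\tau\,d\nu_t$ cancels the $1/\tau$ singularity, yielding exponentially small errors uniformly in $t$. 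At the level of the rate function, the corresponding limit is handled by lower semicontinuity of $\mc I$ together with its coercivity on mass near the origin. Once these approximations are in place, the remaining variational computation—including the asymptotic regimes $\beta\to 0^+$ and $\beta\to+\infty$—is a mechanical Cramér–Legendre exercise.
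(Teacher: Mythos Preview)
Your overall strategy---contraction from the empirical-measure LDP of Theorem~\ref{t:ld1}, plus a truncation to handle the singularity of $\varphi$ at $a+b=0$---is the paper's strategy too. But there is a real gap in your identification of the rate functional. The rate $\I$ supplied by Theorem~\ref{t:ld1} lives on $\mc P(]0,+\infty]^2)$ and is \emph{not} in general of the form $\beta\,\H(\nu\mid\psi)$: the domain $\Delta$ in \eqref{e:muomega} allows a mass $(1-\alpha)$ at $(+\infty,+\infty)$, contributing an extra cost $(1-\alpha)\xi$, see \eqref{e:I2}. Your parametrization by $(\beta,\nu)$ with $\beta\int\tau\,d\nu=1$ covers only $\Delta_0$, i.e.\ $\alpha=1$, so what you compute is $\inf\{\I_0(\mu):\mu(\varphi)=m\}$, whereas the contraction principle delivers $\inf\{\I(\mu):\mu(\varphi)=m\}$. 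That these coincide is the content of the paper's Lemma~\ref{r:IN}, specifically \eqref{morand}:
\[
\inf_{\alpha\in[0,1]}\big\{\beta\,\Lambda^*(\alpha/\beta,m/\beta)+(1-\alpha)\xi\big\}=\beta\,\Lambda^*(1/\beta,m/\beta),
\]
and the proof uses that $F$ bounded forces $\Lambda(x,y)=+\infty$ for $x>\xi$, so the supremum defining $\Lambda^*$ is effectively over $x\le\xi$. This step is exactly where the heavy-tailed regime $\xi<+\infty$ enters (cf.\ Section~\ref{affine}), and your argument as written skips it.

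On the truncation: the paper does not cut off $\varphi$ but instead replaces $\tau_i$ by $\tau_i\vee\eps$ (equivalently $\psi$ by $\psi^\eps$), obtains the LDP for the truncated process by the continuous contraction, and then shows exponential equivalence with the original via Lemmas~\ref{close}--\ref{close2}; the limit $\eps\downarrow0$ at the level of rate functions is handled by the $\Gamma$-convergence of Proposition~\ref{p:igood} with $\psi_n=\psi^{\eps_n}$. Your alternative of truncating $\varphi$ can be made to work, but the ``structural identity'' you invoke only gives $\int_0^\eps \tau^{-1}\,\mu_t^\tau(d\tau)=t^{-1}\#\{i\le N_t-1:\tau_i<\eps\}$ up to boundary terms, and controlling this exponentially as $\eps\downarrow0$ still requires an estimate of the same flavor as Lemma~\ref{close}; it is not automatic from the cancellation you describe.
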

This result is known for a broader class of cumulative processes, but in the contest of a {\it bounded} sequence $(\tau_i)_{i\ge 1}$, see 
\cite{dm1,russ}, or in more generality for $F\equiv 1$, corresponding to the large deviations of $N_t/t$, see \cite{GW}. Here we address the case where $\tau_i$ has an arbitrary distribution, and
indeed large deviations display a more interesting behavior in the case of heavy tailed distribution of $\tau_i$, as explained in section \ref{affine} below.


\subsection{Empirical measures}
We refer to \cite{demzei} for general large deviations theory and \cite{asmussen} for renewal processes.
We denote the classical renewal process associated with 
$(\tau_i)_{i\geq 1}$ by
\begin{equation*}
S_0:=0, \qquad S_n:=\tau_1+\cdots+\tau_n, \qquad n\geq 1,
\end{equation*}
so that the number of renewals before time $t>0$ is also written as
\begin{equation*}
N_t:=\sum_{n=0}^\infty \un{(S_n\leq t)}=
\inf\left\{ n\geq 0\,:\: S_{n} >t\right\}.
\end{equation*}
Recall that the {\it backward recurrence time process} $(A_t)_{t\geq 0}$ and
the {\it forward recurrence time process} $(B_t)_{t\geq 0}$ are defined by
\begin{equation*}
A_t:=t-S_{N_t-1}, \qquad B_t:=S_{N_t}-t, \qquad t\geq 0.
\end{equation*}

It is well known and easy to prove that the process $(A_t,B_t)_{t\geq 0}$ is Markov. One can consider its empirical measure 
\begin{equation}\label{mu_ti}
\mu_t:= \frac 1t \int_{[0,t[}\delta_{(A_s,B_s)}\, ds\in {\mc P}(]0,+
\infty[^2),
\end{equation}
i.e.\ for all bounded continuous $f\colon ]0,+\infty[^2 \to \R$
\begin{equation*}
\mu_t(f):= \frac 1t \int_{[0,t[}f(A_s,B_s)\, ds.
\end{equation*}
The Donsker-Varadhan (DV) theory \cite{DV}, \cite[Chap.~6]{demzei}, provides a general result for the large deviations of the empirical 
measure of Markov processes on metric spaces. However, the standard assumptions of classical DV theorems do not hold here. In fact, even formally, the DV rate functional does not provide the right large deviations functional, see Section~\ref{s:i5} below for a discussion. 

The main result of this paper, in Theorem~\ref{t:ld1} below, is a large deviations principle for the law ${\bf P}_t$ of $\mu_t$ 
as $t\to+\infty$ with an explicit rate functional $\I$ defined in \eqref{e:I2}. This allows to deduce Theorem~\ref{t:ldcounting} with a 
contraction principle and obtain a relationship between $\I$ and $J_F$, see \eqref{IJ_F} below.

\subsection{The large deviations rate functional}
In order to properly define the rate functional $\I$ for the large
deviations of the law $\mb P_t$ of $\mu_t$, some preliminary notation is needed.


For a Polish space $X$, $C_\mathrm{b}(X)$ denotes the space of real bounded
continuous functions on $X$, and $\cP(X)$ denotes the Polish space of Borel probability measures on $X$, equipped with its narrow (weak) 
topology. For $\mu \in \cP (X)$ is a Borel probability measure on a metric space $X$ and $f \colon X \to [0,+\infty]$ a Borel function, the 
notation
\begin{equation*}
\mu(f):=\int_X f\, d\mu,
\end{equation*}
is used throughout the paper. We also adopt the conventions
\begin{equation*}
  \label{convention}
0\cdot \infty =0, \qquad \frac1\infty=0.
\end{equation*}
The space $]0,+\infty]$ will be endowed throughout
the paper with a metric which makes it isometric to $[0,+\infty[$, for instance by setting $t\colon ]0,+\infty] \to[0,+\infty[$,
\begin{equation*}
t(p):=\frac 1p, \qquad
d(p,p'):=\left|t(p) -t(p')\right|, \qquad p,\,p' \in  ]0,+\infty].
\end{equation*}
Thus $(]0,+\infty],d)$ is a Polish space. Let
\begin{equation*}
\tau \colon ]0,+\infty]\times  ]0,+\infty]\to \, ]0,+\infty], \qquad
\tau(a,b):=a+b,
\end{equation*}
while we understand $\tau \colon ]0,+\infty]\to]0,+\infty]$ to be the identity map. Thus for $\mu\in\cP(]0,+\infty]^2)$ and $\pi\in\cP(]0,+\infty])$ 
\begin{equation*}
\mu(1/\tau)=\mu(1/(a+b))=\int_{]0,+\infty]^2} \frac1{a+b}\, \mu(da,db),
\end{equation*}
\begin{equation}
\label{e:tildepi/tau0}
\pi(\tau)=
\int_{]0,+\infty]} \tau\, \pi(d\tau), \qquad \pi(1/\tau)=
\int_{]0,+\infty]} \frac1\tau\, \pi(d\tau).
\end{equation}
Let us define $\Delta_0\subset\mc P(]0,+\infty]^2)$ as 
\begin{equation}
  \label{e:muomega0}
\begin{split}
\Delta_0:=\Big\{\mu_0\in \mc P(]0,+\infty]^2) \, : \: & \mu_0(da,db)=  \int_{[0,1]\times]0,+\infty[} \delta_{(u\tau,(1-u)\tau)}(da,db)\, du \otimes \pi(d
\tau), 
\\ & \ \pi \in \mc P(]0,+\infty[), \ \pi(1/\tau)<+\infty \Big\}.
\end{split} 
\end{equation}
In other words, $\mu_0$ is the law of $(UP,(1-U)P)$, where $U$ and $P$ are independent, $U$ is uniform on $[0,1]$ and $P$ has law $\pi\in
\mc P(]0,+\infty[)$. We also set $\Delta\subset\mc P(]0,+\infty]^2)$
\begin{equation}  \label{e:muomega}
\begin{split}
\Delta:=\Big\{ &
  \mu= \alpha \mu_0 +(1-\alpha)\delta_{(+\infty,+\infty)}\,:
  \: \mu_0 \in \Delta_0, \, \alpha\in[0,1]  \Big\}.
\end{split}
\end{equation}
If $\mu \in \Delta$ then the writing \eqref{e:muomega} is unique up to
the trivial arbitrary choice of $\mu_0$ when $\alpha=0$.

If $\nu,\mu\in\cP(X)$ then
$\H(\nu\,|\,\mu)$ denotes the relative entropy of $\nu$ with respect to $\mu$;
this notation is used regardless of the space $X$. Finally, we set
\begin{equation}
\label{e:xiphi}
\xi:= \sup \Big\{c \in \bb R\,:\: \psi(e^{c\tau})<+\infty \Big\}
\in[0,+\infty],
\end{equation}
where we recall that $\psi$ denotes the law of $\tau_i$.  

\begin{definition}
Let $\pi\in\mc P(]0,+\infty[)$ satisfy
$\pi(1/\tau)\in\,]0,+\infty[$, and set
\begin{equation}
\label{e:tildepi}
\tilde \pi(d\tau):= \frac 1{\pi(1/\tau)}\, \frac1\tau\, \pi(d\tau)\in\mc P(]0,+\infty[).
\end{equation}
Then the functionals $\I_0,\,\I \colon \mc P(]0,+\infty]^2) \to [0,+\infty]$ are defined by
\begin{equation}
\label{e:I1.5}
\I_0(\mu):=
  \begin{cases}
\pi(1/\tau) \H\big(\tilde\pi\, \big|\, \psi\big)   
           & \text{if $\mu \in \Delta_0$ is given by \eqref{e:muomega0} }
\\
+\infty & \text{if $\mu \notin \Delta_0$},
  \end{cases}
 \end{equation}
\begin{equation}\label{e:I2}
\I(\mu):=
  \begin{cases}
\alpha\, \pi(1/\tau) \H\big(\tilde\pi\, \big|\, \psi\big)   + (1-\alpha)\, \xi
           & \text{if $\mu \in \Delta$ is given by \eqref{e:muomega0}-\eqref{e:muomega} }
\\
+\infty & \text{if $\mu \notin \Delta$}.
  \end{cases}
 \end{equation}
\end{definition}
Any $\mu \in \Delta$ can be written in the form
\eqref{e:muomega}, with the only caveat that $\pi$ is not uniquely defined if
$\alpha=0$.  
 Notice that for $\pi$ and $\tilde\pi$ as in \eqref{e:tildepi} the following relations hold
\begin{equation}
\label{e:tildepi/tau}
\tilde \pi(\tau) = \frac 1{\pi(1/\tau)}, \qquad 
\pi(d\tau):= \frac 1{\tilde \pi(\tau)}\, \tau\, \tilde \pi(d\tau).
\end{equation}

\begin{prop}
\label{p:igood2}
The functional $\I$ is \emph{good}, namely its sublevel sets are compact. 
Moreover $\,\I$ is the lower-semicon\-tinuous envelope of $\I_0$.

For all bounded and continuous $F\colon ]0,+\infty[ \to[0,+\infty[$ the functional $J_F$ defined in \eqref{e:IN} is related to $\I_0$ and $\I$ by 
the formulae
\begin{equation}
\label{I_0J_F}
 J_F(m) = \min\left\{ \I_0(\mu)\,: \: \mu\in{\mc P}(]0,+\infty]^2), \, \int_{]0,+\infty]^2}\frac{F(a+b)}{a+b}\,\mu(da,db)=m
 \right\},
\end{equation}
\begin{equation}
\label{IJ_F}
 J_F(m) = \min\left\{ \I(\mu)\,: \: \mu\in{\mc P}(]0,+\infty]^2), \, \int_{]0,+\infty]^2}\frac{F(a+b)}{a+b}\,\mu(da,db)=m
 \right\}.
\end{equation}
\end{prop}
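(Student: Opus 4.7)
The plan is to prove first that $\I$ is the lsc envelope of $\I_0$ (which automatically makes $\I$ lsc), then derive goodness from a tightness bound, and finally obtain \eqref{I_0J_F} by Cram\'er duality and \eqref{IJ_F} by a monotonicity estimate on $\Lambda^*$.

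\emph{Envelope (recovery).} The inequality $\I\le\I_0$ is immediate since the two functionals agree on $\Delta_0$ and $\I_0\equiv+\infty$ elsewhere. For the converse I build, for each $\mu=\alpha\mu_0+(1-\alpha)\delta_{(+\infty,+\infty)}\in\Delta$ with $\I(\mu)<+\infty$, a recovery sequence $\nu_n\in\Delta_0$ coming from $\pi_n:=\alpha\pi+(1-\alpha)\sigma_n$, where $\sigma_n$ is associated via $\sigma_n(d\tau):=\tau\psi_{c_n}(d\tau)/\psi_{c_n}(\tau)$ with the exponential tilt $\psi_{c_n}:=e^{c_n\tau}\psi/\psi(e^{c_n\tau})$ at $c_n\uparrow\xi$. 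A direct computation gives $\sigma_n(1/\tau)=1/\psi_{c_n}(\tau)$ and
\[
\sigma_n(1/\tau)\,\H(\psi_{c_n}|\psi)\;=\;c_n-\log\psi(e^{c_n\tau})/\psi_{c_n}(\tau)\;\longrightarrow\;\xi,
\]
the quotient vanishing because either the mean $\psi_{c_n}(\tau)$ diverges (steep case) or the numerator stays bounded (non-steep case). Since $\sigma_n\to\delta_{\{+\infty\}}$ narrowly and the reconstruction $\pi\mapsto\int\!\int_0^1\delta_{(u\tau,(1-u)\tau)}\,du\,\pi(d\tau)$ is narrowly continuous, $\nu_n\to\mu$; convexity of $\H$ applied to the mixture, together with the asymptotic support separation of its two pieces, gives $\I_0(\nu_n)\to\alpha\I_0(\mu_0)+(1-\alpha)\xi=\I(\mu)$.

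\emph{Lower semicontinuity.} Conversely, if $\nu_n\in\Delta_0$ (with $\pi_n$) converges narrowly to some $\mu$, the narrowly continuous push-forward $(a,b)\mapsto a+b$ gives $\pi_n\to\lambda=\alpha\pi+(1-\alpha)\delta_{\{+\infty\}}$ in $\cP(]0,+\infty])$, and applying the reconstruction forward identifies $\mu=\alpha\mu_0+(1-\alpha)\delta_{(+\infty,+\infty)}\in\Delta$ (so $\Delta$ is narrowly closed). For the rate I apply the entropy variational
\[
\H(\tilde\pi_n|\psi)\;\ge\;\tilde\pi_n(g+c\tau)-\log\psi(e^{g+c\tau})
\]
with $g\in C_b(]0,+\infty])$ vanishing at $+\infty$ and $c<\xi$, multiply by $\pi_n(1/\tau)$, and use $\pi_n(1/\tau)\tilde\pi_n(\tau)=1$ and $\pi_n(1/\tau)\tilde\pi_n(g)=\pi_n(g/\tau)$ to get
\[
\pi_n(1/\tau)\H(\tilde\pi_n|\psi)\;\ge\;c+\pi_n(g/\tau)-\pi_n(1/\tau)\log\psi(e^{g+c\tau}).
\]
Passing to the limit through $\pi_n(g/\tau)\to\alpha\pi(g/\tau)$ and $\pi_n(1/\tau)\to\alpha\pi(1/\tau)$, then rewriting via the tilted measure $\psi_c$ and supping over $g$ and finally over $c\uparrow\xi$, one obtains $\liminf_n\I_0(\nu_n)\ge\alpha\I_0(\mu_0)+(1-\alpha)\xi=\I(\mu)$. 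This is the delicate step: the standard $C_b$-test variational only captures $\alpha\I_0(\mu_0)$, and recovering the escaping-mass contribution $(1-\alpha)\xi$ requires the unbounded linear piece $c\tau$, whose coefficient $c$ survives precisely because of the identity $\pi_n(1/\tau)\tilde\pi_n(\tau)=1$.

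\emph{Goodness and formulas.} The elementary tightness bound
\[
\mu(\{a<\eps\}\cup\{b<\eps\})\;=\;\alpha\int\min(1,2\eps/\tau)\,\pi(d\tau)\;\le\;2\eps\,\alpha\pi(1/\tau)
\]
reduces tightness of $\{\I\le L\}$ to a uniform bound on $\alpha\pi(1/\tau)$; the Cram\'er lower bound $\H(\tilde\pi|\psi)\ge I_\psi(\tilde\pi(\tau))=I_\psi(1/\pi(1/\tau))$ gives $\alpha\pi(1/\tau)\,I_\psi(1/\pi(1/\tau))\le L$, and since $I_\psi(m)\to+\infty$ as $m\to 0^+$, the map $A\mapsto A\,I_\psi(1/A)$ is coercive at $A=\infty$ and trivially bounded near the typical value, which confines $\alpha\pi(1/\tau)$ to a compact set and yields tightness. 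For \eqref{I_0J_F}: setting $\beta:=\tilde\pi(\tau)=1/\pi(1/\tau)$ recasts the constraint $\mu(F(a+b)/(a+b))=m$ as $\tilde\pi(\tau)=\beta$ and $\tilde\pi(F)=m\beta$, while $\I_0(\mu)=\H(\tilde\pi|\psi)/\beta$; Cram\'er duality gives $\min\H(\tilde\pi|\psi)=\Lambda^*(\beta,m\beta)$, and minimising in $\beta$ (substitute $\beta\leftrightarrow 1/\beta$) reproduces $J_F(m)$. For \eqref{IJ_F}: since $F(\tau)/\tau\to 0$, the constraint on $\mu\in\Delta$ reads $\alpha\mu_0(F(a+b)/(a+b))=m$ and the minimum equals $\inf_{\alpha\in(0,1]}\{\alpha J_F(m/\alpha)+(1-\alpha)\xi\}$; the value at $\alpha=1$ gives $\le J_F(m)$, while the matching $\ge$ comes from $\partial_a\Lambda^*(a,b)=p^*\le\xi$ (the tilt must lie in the exponential-moment domain), giving $\Lambda^*(1/\gamma,m/\gamma)-\Lambda^*(\alpha/\gamma,m/\gamma)\le(1-\alpha)\xi/\gamma$ and hence $\gamma\Lambda^*(\alpha/\gamma,m/\gamma)+(1-\alpha)\xi\ge J_F(m)$ after the change of variables $\gamma=\alpha\beta$, so taking $\inf_\gamma$ closes the argument.
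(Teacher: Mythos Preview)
The recovery-sequence construction has a genuine gap. Your tilts $\psi_{c_n}$ with $c_n\uparrow\xi$ only produce $\sigma_n\to\delta_{+\infty}$ when the family is \emph{steep}, i.e.\ when $\psi_{c}(\tau)\to+\infty$ as $c\uparrow\xi$. If instead $\xi<+\infty$ and $\psi(\tau e^{\xi\tau})<+\infty$ (for instance $\psi(d\tau)\propto e^{-\xi\tau}\tau^{-3}\,d\tau$ on $[1,+\infty[$), then $\psi_{c_n}\to\psi_\xi$ narrowly with $\psi_\xi(\tau)<+\infty$, the size-bias $\sigma_n$ converges to the proper law $\tau\psi_\xi/\psi_\xi(\tau)$ rather than to $\delta_{+\infty}$, and your $\nu_n$ does \emph{not} converge to $\mu$. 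Your own sentence ``the numerator stays bounded (non-steep case)'' names the regime, but in that regime the conclusion $\sigma_n(1/\tau)\H(\psi_{c_n}|\psi)\to\xi$ is false (the limit is $\xi-\log\psi(e^{\xi\tau})/\psi_\xi(\tau)$), and more importantly the sequence targets the wrong limit measure. The case $\xi=0$ with $\psi(\tau)<+\infty$ is already a counterexample: $c_n\uparrow 0$ forces $\psi_{c_n}\to\psi$ and nothing escapes. The paper avoids this by manufacturing the escaping mass from conditionings $\psi(\cdot\,|\,[M,L[)$ with $M,L\to+\infty$, for which $\psi(\tau|[M,L[)^{-1}\H(\psi(\cdot|[M,L[)\,|\,\psi)\le -M^{-1}\log\psi([M,+\infty[)\to\xi$ holds regardless of steepness.

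Two smaller points. In the lsc step, the convergences $\pi_n(g/\tau)\to\alpha\pi(g/\tau)$ and $\pi_n(1/\tau)\to\alpha\pi(1/\tau)$ are not automatic under narrow convergence in $\cP(]0,+\infty])$, since $1/\tau$ and $g/\tau$ are unbounded near $0$; you need either a uniform-integrability input (which you have not established) or, as the paper does, to arrange $\log\psi(e^{g+c\tau})<0$ so that the troublesome term can be \emph{dropped} rather than passed to the limit. In the goodness step, your bound $\alpha\pi(1/\tau)\,I_\psi(1/\pi(1/\tau))\le L$ does not directly control $\alpha\pi(1/\tau)$ because $I_\psi$ vanishes at $\psi(\tau)$; your parenthetical ``trivially bounded near the typical value'' is the right patch but should be made explicit, e.g.\ by splitting on whether $\H(\tilde\pi|\psi)$ or $\tilde\pi(\tau)$ is bounded below, which is exactly the dichotomy the paper uses. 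Your treatment of \eqref{I_0J_F}--\eqref{IJ_F} via Csisz\'ar duality and the subgradient bound $\partial_a\Lambda^*\le\xi$ is correct and close in spirit to the paper's Lemma~\ref{r:IN}, though the paper proves the analogue of \eqref{morand} pointwise in $\beta$ via the sup-formula for $\Lambda^*$ rather than through a derivative estimate.
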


\subsection{The large deviations principle for the empirical measure}
We give here the main result of this paper.
\begin{theorem}\label{t:ld1}
The family $({\bf P}_t)_{t>0}$ satisfies a large deviations
principle with good rate $\I$ defined by \eqref{e:I2} as $t\uparrow+\infty$ with speed $t$, i.e.\ for each closed set $\mc C \subset \mc
    P(]0,+\infty]^2)$
\begin{equation}\label{upper}
 \varlimsup_{t \to +\infty} \frac{1}{t} \log {\bf P}_t(\mc C)
  \le - \inf_{u \in \mc C} \I(u)
\end{equation}
and for each open set $\mc O \subset
    \mc P(]0,+\infty]^2)$
\begin{equation}\label{lower}
 \varliminf_{t \to +\infty} \frac{1}{t}  \log {\bf P}_t(\mc O)
  \ge - \inf_{u \in \mc O} \I(u).
\end{equation}
\end{theorem}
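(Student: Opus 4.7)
My plan is to prove the LDP via the standard route: first establish exponential tightness, then prove matching upper and lower bounds, exploiting the structural identity that expresses $\mu_t$ as a convex combination of the $\Delta_0$-type measure built from the completed renewal intervals and the contribution of the last, incomplete interval. For exponential tightness of $({\bf P}_t)_{t>0}$ on $\cP(]0,+\infty]^2)$, note that compactness in this space fails only near the boundary $\{a=0\}\cup\{b=0\}$, which corresponds to abnormally short service times; a Chernoff-type bound on $\bbP(N_t \ge ct)$ using $\psi\big(]0,\delta]\big) < 1$ for $\delta$ small rules this out, after checking that the mass $\mu_t\big(\{a\wedge b<\delta\}\big)$ is bounded by $\delta N_t/t$.

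The core structural identity is that during the $n$-th renewal interval $[S_{n-1},S_n)$ the map $s\mapsto(A_s,B_s)$ parametrizes $\{a+b=\tau_n\}$ linearly in $u = (s-S_{n-1})/\tau_n\in[0,1]$, so
\begin{equation*}
\mu_t \;=\; \frac{1}{t}\sum_{n=1}^{N_t-1} \tau_n \int_0^1 \delta_{(u\tau_n,(1-u)\tau_n)}\, du \;+\; \frac{1}{t}\int_{S_{N_t-1}}^{t}\delta_{(A_s,B_s)}\,ds .
\end{equation*}
The first sum lies in $\alpha_t \Delta_0$ with $\alpha_t = S_{N_t-1}/t$, and is determined by the weighted empirical measure $\pi_t(d\tau) = \frac 1t\sum_{n=1}^{N_t-1}\tau_n\,\delta_{\tau_n}$. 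The second term has mass $1-\alpha_t$ supported on the segment $\{a+b=\tau_{N_t}\}$, and in the topology of $]0,+\infty]^2$ collapses to $\delta_{(+\infty,+\infty)}$ precisely when $\tau_{N_t}$ is of order $t$, explaining the structure of $\Delta$ in \eqref{e:muomega}.

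For the upper bound I would combine, conditionally on $N_t=n$, a Sanov-type estimate for $\tilde\pi_n := \frac 1n \sum_{i=1}^n \delta_{\tau_i}$ with rate $\H(\cdot|\psi)$, and the exponential estimate $\bbP(\tau_{n+1}\ge\gamma t)\le e^{-\gamma t(\xi-\eps)}$ coming from the definition \eqref{e:xiphi} of $\xi$ together with Markov's inequality. Writing $\mu=\alpha\mu_0+(1-\alpha)\delta_{(+\infty,+\infty)}$ with $\mu_0$ determined by $\pi$, the relations $\alpha = \beta \tilde\pi(\tau)$, $\beta = n/t$, and $\tilde\pi(d\tau)\propto \tau^{-1}\pi(d\tau)$ from \eqref{e:tildepi}--\eqref{e:tildepi/tau} turn $n\,\H(\tilde\pi|\psi)$ into $t\cdot\alpha\,\pi(1/\tau)\,\H(\tilde\pi|\psi)$, yielding the first term of $\I(\mu)$, while the tail estimate yields $(1-\alpha)\xi$. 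Standard Laplace arguments together with the exponential tightness upgrade these pointwise neighborhood estimates to the full upper bound \eqref{upper}.

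For the lower bound I would, for each $\mu\in\Delta$ with $\I(\mu)<+\infty$, construct a tilted measure under which the $\tau_i$ are i.i.d.\ with law $\tilde\pi$ for $i\le n$ with $n\approx \alpha\,\pi(1/\tau)\,t$, and the $(n+1)$-th waiting time is conditioned to be of order $(1-\alpha)t$; an ergodic/LLN computation then gives $\mu_t\to\mu$ in probability under this measure, while the Radon-Nikodym derivative contributes exactly $\I(\mu)$ to the exponential rate. The hard part will be the treatment of the last incomplete interval: unlike the Sanov part, the contribution $(1-\alpha)\xi$ is a genuinely boundary-driven term unexplained by Donsker-Varadhan theory, and requires sharp control of $\psi$'s upper tail near its exponential decay threshold $\xi$, including the degenerate heavy-tailed case $\xi=0$ — precisely the regime in which the affine stretches of $\I$ (and hence of $J_F$) appear, making strict convexity fail and forcing a separate argument even to identify the minimizing $\alpha$.
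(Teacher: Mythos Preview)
Your structural decomposition of $\mu_t$ and the exponential tightness argument are essentially those of the paper (Lemmas~\ref{l:3.2} and~\ref{nut}). However, both your upper and lower bound strategies diverge from the paper's, and the lower bound contains a genuine gap.

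For the upper bound, the paper does not use Sanov. It proves a uniform free-energy bound $\sup_t \E\,e^{t\mu_t(f)} < \infty$ for a class $\Gamma$ of lower-semicontinuous test functions (Proposition~\ref{mgf}), combines this with the fact that $\mu_t$ is deterministically close to $\Delta$ for large $t$ (Lemma~\ref{nut}), and then applies a minimax lemma together with the variational inequality $\sup_{f\in\Gamma}\mu(f)\ge\I(\mu)$ on $\Delta$ (Lemma~\ref{l:3.4}). Your Sanov route is not clearly wrong, but it faces a real obstacle you do not address: recovering $\beta=n/t$ and $\alpha$ from $\mu_t\approx\mu$ requires controlling $\tilde\pi_n(\tau)=S_n/n$, an unbounded functional that is not continuous in the weak topology in which Sanov holds. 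The free-energy approach sidesteps this entirely.

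Your lower bound has a more concrete gap. Conditioning the last waiting time to exceed $(1-\alpha)t$ costs $-\log\psi\big([(1-\alpha)t,+\infty[\big)$ in entropy, and for the argument to yield $\I(\mu)$ you need
\[
\varlimsup_{x\to+\infty}\ \frac{-\log\psi([x,+\infty[)}{x}\ \le\ \xi.
\]
But the definition \eqref{e:xiphi} only guarantees that the \emph{liminf} of this ratio equals $\xi$; the limsup can be strictly larger (even $+\infty$) if the tail of $\psi$ oscillates --- precisely the phenomenon alluded to at the end of Section~\ref{s:i5}. The paper circumvents this by \emph{never} forcing a single $\tau_i$ to be of order $t$. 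It proves the lower bound by direct tilting only for $\mu\in\Delta_0$ (where $\alpha=1$ and no large jump is needed), and then invokes the $\I$-density of $\Delta_0$ in $\Delta$ (Proposition~\ref{p:igood}-(3)): the approximants $\mu_n\in\Delta_0$ are built by giving $\tilde\pi_n$ a component $\psi(\cdot\,|\,[M,L[)$ with \emph{fixed} $M,L$, sent to $+\infty$ only \emph{after} $t\to+\infty$, so that the relevant tail cost is governed by the liminf rather than the limsup. Without this two-step limit (or an equivalent device), your ``condition on a jump of order $t$'' strategy fails for general $\psi$.
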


\subsubsection{Some comments on the rate functional $\I$}
We stress again that the probability distribution $\psi$ on $]0,+\infty[$ is completely arbitrary. However the fine properties of the associated 
renewal process depend on $\psi$, and the same is true for $\I$. Define $\bar \mu \in  \mc P(]0,+\infty]^2)$ as
\begin{equation*}
\bar \mu(da,db):= 
\begin{cases}
 \int_{[0,1]\times]0,+\infty[} \frac{\tau}{\psi(\tau)} \delta_{(u\tau,(1-u)\tau)}(da,db)\, du \otimes \psi(d\tau)
& \text{if $\psi(\tau)<+\infty$}
\\
\delta_{(+\infty,+\infty)} & \text{if $\psi(\tau)=+\infty$}
\end{cases}
\end{equation*}
It follow from our results that $\mu_t\rightharpoonup\bar \mu$ as 
$t\to+\infty$.  Then
\begin{remark} $ $
{\rm
\begin{enumerate}

\item{ If $\xi=+\infty$, i.e.\ if $\psi$ has all exponential moments, then $\I \equiv\I_0$ and $\I(\mu)=0$ iff $\mu = \bar\mu \in \mc P(]0,+\infty[^2)$.
}

\item{If $\xi <+\infty$ and $\psi(\tau)=+\infty$, then $\I\neq \I_0$, and $\I(\mu)=0$ iff $\mu=\delta_{(+\infty,+\infty)}=\bar \mu $.
}

\item{If $\xi<+\infty$ and $\psi(\tau)<+\infty$. Then $\I(\bar\mu)=0$ and thus
\begin{equation*}
\I(\alpha \bar \mu + (1-\alpha) \delta_{(+\infty,+\infty)})=(1-\alpha)\xi
\end{equation*}
Therefore in this case the functional $\I$ is \emph{not strictly convex}. Still, if $\xi>0$, $\I(\mu)=0$ iff $\mu=\bar \mu$. On the other hand, if $
\xi=0$, then $\I$ vanishes identically on the segment $\{\alpha \bar \mu + (1-\alpha) \delta_{(+\infty,+\infty)},\,\alpha\in[0,1]\} $. Therefore the 
large deviations at speed $t$ do not yield the full large deviations behavior if $\xi=0$, and we shall study large deviations at a slower speed  
in a future work.
}
\end{enumerate}
}
\end{remark}
For the reader interested in the relation with the Statistical Mechanics models \cite{LZ,LMZ1} already cited above, we point out that such 
Gaussian models are related to the case (3) with $\xi=0$, so that the non-exponential decay of slow currents there observed is a 
consequence of the fact that $\I^{-1}(\{0\})$ is a whole segment in this case. We refer to \cite[section 3]{LMZ1} for further details.


\subsection{Relation with Donsker-Varadhan approach}
\label{s:i5}


In the case of heavy-tailed distribution of $\tau_i$, the DV theory would yield  $\I_0$, defined in \eqref{e:I1.5}, as rate functional, while Theorem \ref{t:ld1} shows that $\I$ is the correct functional. In fact,
if $\xi<+\infty$, long inter-arrival times $\tau_i$ of length comparable with $t$ may occur with a probability which is not super-exponentially small in $t$.
Thus $\I(\mu)$ is finite at $\mu=\delta_{(+\infty,+\infty)}$, while the DV functional $\I_0$ is finite only on probability measures supported by $]0,+\infty[^2$.

However $\I_0$ is in general {\it not} a good rate functional on ${\mc P}(]0,+\infty[^2)$ by proposition \ref{p:igood2} and it is good if and only if all exponential moments of $\tau_1$ are finite, i.e. $\xi=+\infty$. As long as one exponential moment of $\tau_1$ is infinite, then the sublevels of $\I_0$ in ${\mc P}(]0,+\infty[^2)$ are not compact, 
and the law ${\bf P}_t$ of $\mu_t$ as $t\to+\infty$ does not satisfy a full large deviations principle on ${\mc P}(]0,+\infty[^2)$. 

There are various extensions of DV theory, dealing with the lack of regularity properties of the Markov process, e.g.\ \cite{Krylov}, or ergodicity 
\cite{Wu1,Wu2}. However, even such extensions do not take into account the model studied in this paper, and at the same time do not 
provide the right large deviations rate functional in this case.

We finally remark that this criticality is not a special feature of $(A_t,B_t)$, but also other processes feature singular behavior. In the same 
setting, one may consider for instance the Markov process $\sigma_t:=(\tau_{N_t}, \frac{t-S_{N_t-1}}{\tau_{N_t}})$. If the tail of $\psi$ has an oscillating behavior, then the empirical measure of $(\sigma_t)_t$ does not  
even satisfy a large deviations principle, but it satisfies optimal upper and a lower large deviations bounds with functionals which may 
be different. 
This issue is not addressed here and will be the subject of a 
forthcoming work.

\subsection{Affine stretches}\label{affine} In this section we detail how the structure of the rate functional $\I$ explains the appearance of flat stretches in large deviations rate functionals $J_F$.  Let us consider the case of $F\equiv 1$, i.e.\ the large deviations of 
$N_t/t$ as $t\uparrow+\infty$, where $N_t$ is the counting process. Recall that the rate functional is $J_1(m)=m\Lambda^*(1/m)$, where $\Lambda^*(a):=\sup_x(ax-\log \psi(e^{x \tau}))$. Here we suppose that $\xi<+\infty$, i.e.\ that $\psi$ has some infinite exponential moment, and that 
\begin{equation*}
T:=\sup_{c<\xi}\frac{\E(\tau_1 e^{c\tau_1})}{\E(e^{c\tau_1})}<+\infty.
\end{equation*}
It is then easily seen that $J_1(\cdot)$ is strictly convex on 
$[1/T,+\infty[$, while
\begin{equation*}
J_1(m)=m\Lambda^*(T)+(1-mT)\xi, \qquad m\in[0,1/T].
\end{equation*}
If $\xi=0$ and $T<+\infty$ (which is the case if for instance $\psi$ has polynomial tails and finite mean) , $J_1$ vanishes on $[0,1/T]$.

Therefore, there is a transition between a strictly-convex regime and an affine regime. However, if we go back to the formula \eqref{I_0J_F} above, which becomes for $F\equiv 1$
\begin{equation}
\label{mile}
\begin{split}
 J_1(m) = & \inf\left\{\pi(1/\tau) \H\big(\tilde\pi\, \big|\, \psi\big)\, : \: \pi\in\cP(]0,+\infty[), \, \pi(1/\tau)=m \right\}
 \\ = & m \inf\left\{ \H\big(\zeta\, \big|\, \psi\big)\, : \: \zeta\in\cP(]0,+\infty[), \, \zeta(\tau)=1/m \right\},
 \end{split}
\end{equation}
then it is hard to understand what makes this $\inf$ strictly convex for $m>1/T$ and affine for $m\leq 1/T$. This apparent paradox is solved 
if we take into account formula \eqref{IJ_F} above, which becomes in this case
\begin{equation}
\label{davi}
\begin{split}
 J_1(m) = & \inf\Big\{m \H\big(\zeta\, \big|\, \psi\big) +(1-\alpha)\xi\, : \\ & \qquad \qquad\, \zeta\in\cP(]0,+\infty[), \, \alpha\in[0,1], \, \zeta(\tau)=
\alpha/m \Big\},
 \end{split}
\end{equation}
In \eqref{davi} the appearance of the two regimes is clear.

\begin{itemize}
\item For $m\geq 1/T$, there exists a measure $\zeta_m\in\cP(]0,+\infty[)$ which minimizes the relative entropy $\H\big(\zeta\, \big|\, \psi\big)$ under the constraint $\zeta(\tau)=1/m$, and this minimizer is an exponential tilt of $\psi$, i.e.\
\begin{equation*}
\zeta_m(d\tau)=\frac1{\psi(e^{c(m)\tau})} \, e^{c(m)\tau}\, \psi(d\tau),
\quad \textrm{where $c(m)$ is fixed by} \quad \frac{\psi(\tau\, e^{c(m)\tau})}{\psi(e^{c(m)\tau})}=\frac1m
\end{equation*}
and $\H\big(\zeta_m\, \big|\, \psi\big)=\Lambda^*(1/m)$.
Then the minimizer of \eqref{mile} is $\zeta_m$ and the minimizer of \eqref{davi} is $\zeta_m$ and $\alpha=1$.

\item For $m<1/T$, on the other hand, no minimizer of \eqref{mile} exists and the additional parameter $\alpha$ in \eqref{davi} starts to play a 
role; it turns out that the minimizer of \eqref{davi} is given by $\alpha_m=Tm$ and $\zeta_{1/T}$, and therefore we obtain the correct value of 
$J_1(m)$. 
\end{itemize}
The same picture is correct for more general functions $F$. Although $J_F$ can be expressed as an $\inf$ in terms of $\I_0$, in general 
this $\inf$ is not attained and it is not easy to guess a minimizing sequence; on the other hand this problem is easily solved if one expresses 
$J_F$ as a $\min$ in terms of $\I$ over a larger set of probability measures. 

This phenomenon is discussed in detail in \cite{LMZ1} with applications to a heat conduction model. Although the results of this paper are not explicitly applied there, the intuition behind the proof of \cite[Theorem 3.4]{LMZ1} comes from the understanding of the structure of the functional $\I$ defined above.

For more on minimization of entropy functionals, see \cite{csiszar}.

\section{The functional $\I$}
In this section we analyze the properties of the functional $\I$ and prove in particular Proposition~\ref{p:igood2}. We also prove the following 
stability result which will come useful in the following. Recall the definitions  \eqref{e:muomega0}, \eqref{e:muomega}, \eqref{e:xiphi} and 
\eqref{e:I2}. Then
\begin{prop}
\label{p:igood}
Let $(\psi_n)$ be a sequence in $\mc P(]0,+\infty[)$. Let $\xi_n$ and $\I_n$ be defined as in \eqref{e:xiphi} and \eqref{e:I2} respectively, with $
\psi$ replaced by $\psi_n$. Assume that $\psi_n\rightharpoonup\psi$ and $\xi_n \to \xi$ as $n\to +\infty$. Then
\begin{itemize}
\item[(1)] Any sequence $(\mu_n)$ in $\mc P(]0,+\infty]^2)$ such that $\varlimsup_n \I_n(\mu_n) <+\infty$ is tight, and thus precompact  in $
\mc P(]0,+\infty]^2)$.

\item[(2)] For any $\mu$ and any sequence $(\mu_n)$ in $\mc P(]0,+\infty]^2)$ such that $\mu_n \rightharpoonup \mu$, we have $\varliminf_n 
\I_n(\mu_n) \ge \I(\mu)$.

\item[(3)] For any $\mu$  in $\mc P(]0,+\infty]^2)$ with $\I(\mu)<+\infty$, there exists a sequence $(\mu_n)$ such that $\mu_n  \rightharpoonup 
\mu$, $\mu_n \in \Delta_0$ for all $n$, and $\varlimsup_n \I_n(\mu_n) \le \I(\mu)$.

\end{itemize}
\end{prop}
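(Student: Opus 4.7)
\medskip

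\textbf{Plan.}
The three claims of Proposition~\ref{p:igood} are the standard pieces of a $\Gamma$-convergence-type statement, so I would handle them in the order (1), (2), (3), each leveraging the affine decomposition $\mu = \alpha \mu_0 + (1-\alpha)\delta_{(+\infty,+\infty)}$ and the identity $\pi(1/\tau)=1/\tilde\pi(\tau)$ from \eqref{e:tildepi/tau}. Throughout, I would parametrise each $\mu_n \in \Delta$ as $\mu_n = \alpha_n\mu_{0,n} + (1-\alpha_n)\delta_{(+\infty,+\infty)}$ via $\pi_n,\,\tilde\pi_n$, and translate everything into statements on the one-dimensional laws $\pi_n$.

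For (1), since $]0,+\infty]$ is compact in the modified metric, tightness in $\mc P(]0,+\infty]^2)$ only requires ruling out escape of mass to the boundary $a=0$ or $b=0$. Using the representation of $\mu_{0,n}$, a direct computation gives $\mu_n(\{a<\delta\}\cup\{b<\delta\}) \le 2\alpha_n\bigl[\pi_n(\tau<\delta) + \delta\,\pi_n(1/\tau)\bigr]$, so the task reduces to controlling the small-$\tau$ mass of $\pi_n$ and the product $\alpha_n\pi_n(1/\tau)$. The bound $\I_n(\mu_n)\le C$ gives $\alpha_n\pi_n(1/\tau)\H(\tilde\pi_n\,|\,\psi_n)\le C$; applying the entropy variational formula to the test function $c\,\un{\tau<\delta}$ together with the tightness of $(\psi_n)$ (which, because $\psi_n\rightharpoonup\psi$ in $\mc P(]0,+\infty[)$, gives $\sup_n\psi_n(\tau<\delta)\to 0$ as $\delta\to 0$) yields the needed uniform smallness. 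The threshold argument (splitting cases $\tilde\pi_n(\tau)\ge M$ and $\tilde\pi_n(\tau)<M$) then closes the tightness estimate in both regimes of $\pi_n(1/\tau)$.

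For (2), given $\mu_n\rightharpoonup\mu$ with $\liminf_n \I_n(\mu_n)<+\infty$, I would pass to a subsequence realising the liminf and use (1) to extract further subsequences with $\alpha_n\to\alpha$ and $\mu_{0,n}\rightharpoonup\mu_0^\star$ in $\mc P(]0,+\infty]^2)$. The key observation is that under $\mu_{0,n}$ the sum $a+b$ has law $\pi_n$, so $\pi_n$ converges narrowly on the compactification $]0,+\infty]$ to a limit $\bar\pi = \beta\pi + (1-\beta)\delta_{+\infty}$ with $\pi\in\mc P(]0,+\infty[)$. The $\delta_{+\infty}$ component in the limit pushes the corresponding part of $\mu_{0,n}$ onto $\delta_{(+\infty,+\infty)}$ (by the uniform splitting $u\tau,(1-u)\tau$ with $\tau\to\infty$), so $\mu = \alpha\beta\,\mu_0^\pi + (1-\alpha\beta)\delta_{(+\infty,+\infty)}$. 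For the entropy part one uses joint lower semicontinuity of $\H$ in narrow topology and Fatou on the factor $\pi_n(1/\tau)$ restricted to the bounded part of $\tau$. The delicate point is showing that the mass escaping to infinity contributes at least $(1-\alpha\beta -(1-\alpha))\xi = \alpha(1-\beta)\xi$ to $\liminf_n \I_n(\mu_n)$: for this I would apply the entropy inequality to $g(\tau) = c\tau$ with $c<\xi_n$ on the large-$\tau$ part of $\tilde\pi_n$ and use $\xi_n\to\xi$, optimising in $c\uparrow\xi$.

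For (3), the construction of the recovery sequence mirrors the decomposition: the $\alpha\mu_0^\pi$ piece is kept essentially unchanged (with $\psi$ replaced by $\psi_n$, continuity of $\H(\tilde\pi\,|\,\cdot)$ handling the small perturbation), while the atom $\delta_{(+\infty,+\infty)}$ is replaced by $\mu_0^{\pi_n^{\mathrm{tail}}}$ for $\tilde\pi_n^{\mathrm{tail}}$ the exponential tilt $\tilde\pi_n^{\mathrm{tail}}(d\tau) = \psi_n(e^{c_n\tau})^{-1}e^{c_n\tau}\psi_n(d\tau)$ with $c_n\uparrow\xi$. One verifies that $\tilde\pi_n^{\mathrm{tail}}(\tau)\to+\infty$, so that $\mu_0^{\pi_n^{\mathrm{tail}}}$ concentrates at $(+\infty,+\infty)$, while the Legendre identity gives
\[
\pi_n^{\mathrm{tail}}(1/\tau)\,\H\bigl(\tilde\pi_n^{\mathrm{tail}}\,\big|\,\psi_n\bigr) = c_n - \tilde\pi_n^{\mathrm{tail}}(\tau)^{-1}\log\psi_n(e^{c_n\tau}) \longrightarrow \xi,
\]
choosing $c_n\uparrow\xi$ slowly enough so that $\log\psi_n(e^{c_n\tau})/\tilde\pi_n^{\mathrm{tail}}(\tau)\to 0$. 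Finally, I would form the mixture $\pi^{(n)} = \alpha\pi + (1-\alpha)\pi_n^{\mathrm{tail}}$ (renormalised suitably so $\mu^{(n)}\in\Delta_0$ approximates $\mu$) and verify the energy bound is achieved in the limit. The main obstacle across the proof is the infinity-moment step in (2): correctly accounting for the $\xi$-penalty produced by mass escaping to $+\infty$ is what distinguishes $\I$ from $\I_0$ and requires the Legendre-type argument tied to the definition of $\xi$ in \eqref{e:xiphi}.
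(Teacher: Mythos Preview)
Your outline for parts (1) and (2) is essentially the paper's approach: control of $\alpha_n\pi_n(1/\tau)$ via the dichotomy on $\tilde\pi_n(\tau)$ for tightness, and for the liminf inequality the decomposition $\pi_n\rightharpoonup\beta\pi+(1-\beta)\delta_{+\infty}$ together with the variational formula for entropy applied to test functions of the form $c\,\chi_M+\varphi(1-\chi_M)/\tau$ (your ``$g(\tau)=c\tau$ on the large-$\tau$ part'' is exactly this). So (1) and (2) are fine.

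Part (3), however, has two genuine gaps.

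First, the claim that ``continuity of $\H(\tilde\pi\,|\,\cdot)$'' lets you keep $\tilde\pi$ fixed while replacing $\psi$ by $\psi_n$ is false: relative entropy is only jointly lower semicontinuous, and $\H(\tilde\pi\,|\,\psi_n)$ can be $+\infty$ for every $n$ even when $\H(\tilde\pi\,|\,\psi)<+\infty$ (think of $\tilde\pi$ absolutely continuous with respect to $\psi$ but singular with respect to each $\psi_n$). The paper circumvents this by building $\tilde\pi_{0,n}$ as a mixture of the conditional laws $\psi_n(\,\cdot\,|A_i)$ on a fine partition $\{A_i\}$ of $[1/M,M[$, so that absolute continuity with respect to $\psi_n$ is forced and the entropy can be computed explicitly and shown to converge.

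Second, and more seriously, your exponential-tilt recovery of the atom at $(+\infty,+\infty)$ breaks down exactly in the regime that motivates the paper. You need $\tilde\pi_n^{\mathrm{tail}}(\tau)\to+\infty$ so that $\mu_0^{\pi_n^{\mathrm{tail}}}\rightharpoonup\delta_{(+\infty,+\infty)}$, but if
\[
T:=\sup_{c<\xi}\frac{\psi(\tau e^{c\tau})}{\psi(e^{c\tau})}<+\infty
\]
(the ``affine-stretch'' case discussed in Section~\ref{affine}), then the tilted mean stays bounded as $c_n\uparrow\xi$, the tilted law converges to a nondegenerate limit on $]0,+\infty[$, and your recovery sequence does not approach $\mu$. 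The paper avoids this by using instead the conditioned measures $\psi_n(\,\cdot\,|[M,L[)$, which concentrate at $+\infty$ by construction as $M\to\infty$, and whose normalised entropy is bounded above by $-M^{-1}\log\psi([M,+\infty[)$, recovering the cost $\xi$ via the tail formula rather than via tilting.
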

In the setting of \cite{DalMaso}, Proposition~\ref{p:igood} states that $\I_n$ $\Gamma$-converges to $\I$, and that $\Delta_0$ is $\I$-dense in 
$\mc P(]0,+\infty]^2)$. 
Before proving Proposition~\ref{p:igood}, let us show how Proposition~\ref{p:igood2} follows immediately from it.
\begin{proof}[Proof of Proposition~\ref{p:igood2}]
In Proposition~\ref{p:igood} take $\psi_n=\psi$. Then the statement (1) implies that $\I$ has precompact sublevel set (namely it is coercive), 
statement (2) implies that $\I$ has closed sublevel sets (namely it is lower semicontinuous), and thus (1) and (2) imply that $\I$ is good. Since 
$\I \le \I_0$, statement (2) implies that $\I$ is smaller or equal than the lower semicontinuous envelope of $\I_0$, while (3) states that $\I$ is 
greater or equal to it.
\end{proof}

\begin{lemma}
\label{algfact} For all $\pi\in \cP(]0,+\infty[)$ such that
$\pi(1/\tau)<+\infty$ and $a>0$
\begin{equation*}
\begin{split}
\pi(1/\tau)\, \H\big(\tilde\pi\, \big|\, \psi\big) & =
\sup_{\varphi} \left(\pi(\varphi/\tau) -\pi(1/\tau)\log\psi (e^\varphi)\right)
\\ & = \sup_{\varphi\,:\:
\psi (e^\varphi)=a} \left(\pi(\varphi/\tau) -\pi(1/\tau)\log\psi (e^\varphi)\right)
\\ & 
= \sup_f  \left(\pi(f) -\pi(1/\tau)\log\psi (e^{\tau f})\right)
\end{split}
\end{equation*}
where the suprema are taken over $\varphi\in C_b(]0,+\infty[)$, and over and $f \in C(]0,+\infty[)$ bounded from below and such that $\pi(f)<+
\infty$.

In particular $\pi \mapsto \pi(\tau) \H(\pi|\psi)$ is convex on $\{\pi \in \mc P(]0,+\infty[)\,:\: \pi(1/\tau)<+\infty\}$ and thus $\I$ is convex.
\end{lemma}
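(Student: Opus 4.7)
The plan is as follows. The opening equality is the classical Donsker--Varadhan variational characterization $\H(\nu|\mu)=\sup_{\varphi\in C_\mathrm{b}}(\nu(\varphi)-\log\mu(e^\varphi))$ applied to $\nu=\tilde\pi$ and $\mu=\psi$: substituting $\tilde\pi(\varphi)=\pi(\varphi/\tau)/\pi(1/\tau)$ from the definition \eqref{e:tildepi} and multiplying through by $\pi(1/\tau)$ yields the first formula.

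For the second equality I exploit that the map $\varphi\mapsto\pi(\varphi/\tau)-\pi(1/\tau)\log\psi(e^\varphi)$ is invariant under the translation $\varphi\mapsto\varphi+c$, since both terms gain exactly the constant $c\,\pi(1/\tau)$. Restricting $\varphi$ to any affine slice $\{\psi(e^\varphi)=a\}$ therefore does not lower the supremum.

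For the third equality I perform the substitution $f=\varphi/\tau$. The direction $\ge$ is obtained by taking $\varphi\in C_\mathrm{b}$, using the invariance just proved to assume $\varphi\ge 0$, and setting $f=\varphi/\tau\ge 0$: this $f$ is continuous, bounded below and satisfies $\pi(f)\le\|\varphi\|_\infty\,\pi(1/\tau)<+\infty$. For the direction $\le$, given an admissible $f$, I approximate $\tau f$ by the bounded continuous truncations $\varphi_n:=\max(-n,\min(\tau f,n))\in C_\mathrm{b}(]0,+\infty[)$ and pass to the limit. For $\pi(\varphi_n/\tau)$, the bound $|\varphi_n/\tau|\le|f|$ with $|f|$ $\pi$-integrable (from the lower bound on $f$ together with $\pi(f)<+\infty$) lets me invoke dominated convergence. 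For $\psi(e^{\varphi_n})$, I split on $\{\tau f\ge 0\}$ where $e^{\varphi_n}\uparrow e^{\tau f}$ (monotone convergence) and $\{\tau f<0\}$ where $0\le e^{\varphi_n}\le 1$ (bounded convergence). This truncation-and-limit argument is the main technical point.

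Finally, the convexity statement. The third representation displays $\pi\mapsto\pi(1/\tau)\,\H(\tilde\pi|\psi)$ as a supremum of functionals linear in $\pi$, hence convex. To propagate convexity to $\I$, I observe that any $\mu\in\Delta$ decomposes canonically as $\mu=\alpha\mu_0+(1-\alpha)\delta_{(+\infty,+\infty)}$ with $\mu_0$ built from $\pi\in\mc P(]0,+\infty[)$; the subprobability $\rho:=\alpha\pi$ equals the push-forward of $\mu|_{]0,+\infty[^2}$ under the sum map $(a,b)\mapsto a+b$, so both $\rho$ and $\alpha=\mu(]0,+\infty[^2)$ are affine functions of $\mu$. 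Pulling the factor $\alpha$ inside the supremum rewrites $\I(\mu)=\sup_f\bigl[\rho(f)-\rho(1/\tau)\log\psi(e^{\tau f})\bigr]+(1-\alpha)\,\xi$, a supremum of affine functionals of $\mu$ plus an affine term; together with the easily verified convexity of the set $\Delta$ this yields convexity of $\I$.
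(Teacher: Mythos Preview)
Your argument is correct and follows the same route as the paper: the Donsker--Varadhan variational formula for $\H(\tilde\pi|\psi)$, the additive shift $\varphi\mapsto\varphi+c$ for the slice equality, and a truncation $\varphi_n=(-n)\vee(\tau f)\wedge n$ for the passage from $\varphi$ to $f$. The paper's own proof dismisses this last step as ``a standard approximation argument'' and does not spell out the convexity of $\I$ at all, so your write-up is in fact more detailed; the only cosmetic remark is that the first variational formula (over $\varphi\in C_\mathrm{b}$) already displays $\pi\mapsto\pi(1/\tau)\H(\tilde\pi|\psi)$ as a supremum of linear functionals with a $\pi$-independent index set, which makes the convexity claim slightly cleaner than invoking the third formula whose admissible class depends on $\pi$.
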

\begin{proof} It is well known that
\begin{equation*}
\H\big(\tilde\pi\, \big|\, \psi\big) = \sup_{\varphi\in C_b(]0,+\infty[)}
\left(\tilde\pi(\varphi) -\log\psi (e^\varphi)\right).
\end{equation*}
Now, suppose that $\psi(e^\varphi)=a>0$ and set $\varphi_a:=\varphi-\log a$. Then
\begin{equation*}
\pi(\varphi/\tau) -\pi(1/\tau)\log\psi (e^\varphi) = \pi(\varphi_a/\tau) -
\pi(1/\tau)\log\psi (e^{\varphi_a})
\end{equation*}
and $\psi(e^{\varphi_a})=1$. Therefore the quantity
\begin{equation*}
\sup_{\varphi\,:\:
\psi (e^\varphi)=a} \left(\pi(\varphi/\tau) -\pi(1/\tau)\log\psi (e^\varphi)\right)
= \sup_{\varphi\,:\:
\psi (e^\varphi)=1} \left(\pi(\varphi/\tau) -\pi(1/\tau)\log\psi (e^\varphi)\right)
\end{equation*}
does not depend on $a>0$ and thus
\begin{equation*}
\begin{split}
  \sup_a \sup_{\varphi\,:\:
\psi (e^\varphi)=a} \left(\pi(\varphi/\tau) -\pi(1/\tau)\log\psi (e^\varphi)\right)
& = \sup_{\varphi} \left(\pi(\varphi/\tau) -\pi(1/\tau)\log\psi (e^\varphi)\right)
\\ & =
\pi(1/\tau)\, \H\big(\tilde\pi\, \big|\, \psi\big),
\end{split}
\end{equation*}
where all suprema are taken over $\varphi\in C_b(]0,+\infty[)$.

A standard approximation argument proves that one can take $\varphi = \tau f$ in the supremum, provided the conditions on $f$ in the 
statement of the lemma hold.
\end{proof}

\begin{proof}[Proof of Proposition~\ref{p:igood}-(1)]
Since $\varlimsup_n \I_n(\mu_n)<+\infty$, $\mu_n \in \Delta$ for $n$ large enough, and thus $\mu_n$ admits the writing \eqref{e:muomega0}-
\eqref{e:muomega}, for some $\alpha_n \in [0,1]$ and $\pi_n \in \mc P(]0,+\infty[)$ with $\pi_n(1/\tau)<+\infty$.
We first show that 
\begin{equation}
\label{e:pibound}
\varlimsup_n \alpha_n \pi_n(1/\tau)<+\infty.
\end{equation}
Notice that
\[
\I_n(\mu_n) \ge \alpha_n \pi_n(1/\tau) \H(\tilde \pi_n|\psi_n),
\qquad
\pi_n(1/\tau) = \frac{1}{\tilde{\pi}_n(\tau)},
\]
so that
\begin{equation*}
\begin{split}
\varlimsup_n \alpha_n \pi_n(1/\tau) \le \varlimsup_n  \frac{\I_n(\mu_n)}{\H(\pi_n|\psi_n)} \wedge \frac{\alpha_n}{\tilde{\pi}_n(\tau)} \le \frac{C}
{\varliminf_n \H(\tilde \pi_n|\psi_n) \vee \tilde \pi_n(\tau)}
\end{split}
\end{equation*}
for some $C>0$. The denominator in the right hand side above is uniformly bounded away from $0$. Indeed, if  $\lim_k \H(\tilde \pi_{n_k}|
\psi_{n_k})$ vanishes on some subsequence $n_k$, then $\lim_k \tilde \pi_{n_k} = \lim_k \psi_{n_k}=\psi$, and therefore $\varliminf_k \tilde 
\pi_{n_k}(\tau) \ge \psi (\tau) >0$ and \eqref{e:pibound} holds. Thus $(\mu_n)$ is precompact.

It is easy to see that for each $M>0$ the set $\Delta_M:= \{\mu \in \Delta \,:\: \mu(1/(a+b) \le M\}$ is compact in $\mc P(]0,+\infty]^2)$. Now by 
\eqref{e:pibound}
\begin{equation*}
\varlimsup_n \mu_n(1/(a+b)) = \varlimsup_n \alpha_n \pi_n(1/\tau) <+\infty
\end{equation*}
namely $\mu_n \in \Delta_M$ for $n$ and $M$ large enough.
\end{proof}

\begin{lemma}
\label{l:realineq}
Let $\pi_n \in \mc P(]0,+\infty[)$ be such that $\pi_n(1/\tau)<+\infty$ and
\begin{equation}
\label{e:pinconv}
\lim_n \pi_n = \beta \pi + (1-\beta)\delta_{+\infty}
\end{equation}
for some $\beta \in [0,1]$ and $\pi  \in \mc P(]0,+\infty[)$ such that $\pi(1/\tau)<+\infty$ .
Then
\begin{equation}
\label{e:realineq}
\varliminf_n \pi_n(1/\tau) \H(\tilde \pi_n|\psi_n) \ge \beta \pi(1/\tau) \H(\tilde \pi|\psi) + (1-\beta)\xi .
\end{equation}
\end{lemma}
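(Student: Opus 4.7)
My approach uses the variational formula of Lemma~\ref{algfact} with a family of test functions combining a compactly supported piece $\varphi/\tau$ (meant to recover the continuous-part entropy $\beta\pi(1/\tau)\H(\tilde\pi|\psi)$) and a positive plateau $c\chi_R$ near $+\infty$ (meant to capture the escape cost $(1-\beta)\xi$). A preliminary reduction is to assume $\varliminf_n\pi_n(1/\tau)\H(\tilde\pi_n|\psi_n)<+\infty$, else \eqref{e:realineq} is vacuous. Along a subsequence realising the liminf, the bootstrap argument used in the proof of Proposition~\ref{p:igood}-(1) gives $\pi_n(1/\tau)$ uniformly bounded: if $\H(\tilde\pi_n|\psi_n)$ stays away from zero, then $\pi_n(1/\tau)\le C/\H(\tilde\pi_n|\psi_n)$; otherwise a sub-subsequence has $\H(\tilde\pi_n|\psi_n)\to 0$, and Pinsker combined with $\psi_n\rightharpoonup\psi$ forces $\tilde\pi_n\rightharpoonup\psi$, whence $\varliminf_n\tilde\pi_n(\tau)\ge\psi(\tau)>0$ and $\pi_n(1/\tau)=1/\tilde\pi_n(\tau)$ is bounded. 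Passing to a further subsequence, $\pi_n(1/\tau)\to\alpha\in[\beta\pi(1/\tau),+\infty[$.

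Fix $c\in(0,\xi)$, $\varphi\in C_c(]0,+\infty[)$ with $\psi(e^\varphi)=1$, and a smooth cutoff $\chi_R\in C(]0,+\infty])$ equal to $0$ on $]0,R]$ and to $1$ at $+\infty$, with $R$ larger than the support of $\varphi$. Set
\[
f_{R,c,\varphi}(\tau):=\frac{\varphi(\tau)}{\tau}+c\,\chi_R(\tau).
\]
This $f$ is admissible in the third form of Lemma~\ref{algfact}, giving
\[
\pi_n(1/\tau)\H(\tilde\pi_n|\psi_n)\ge \pi_n(f_{R,c,\varphi})-\pi_n(1/\tau)\log\psi_n\bigl(e^{\tau f_{R,c,\varphi}}\bigr).
\]
Sending $n\to+\infty$, $f_{R,c,\varphi}\in C_b(]0,+\infty])$ with $f_{R,c,\varphi}(+\infty)=c$ yields $\pi_n(f_{R,c,\varphi})\to\beta\pi(f_{R,c,\varphi})+(1-\beta)c$; invoking $\xi_n\to\xi$ to handle the unbounded exponent one has $\psi_n(e^{\tau f_{R,c,\varphi}})\to\psi(e^{\tau f_{R,c,\varphi}})$, and therefore
\[
\varliminf_n \pi_n(1/\tau)\H(\tilde\pi_n|\psi_n)\ge \beta\pi(f_{R,c,\varphi})+(1-\beta)c-\alpha\log\psi\bigl(e^{\tau f_{R,c,\varphi}}\bigr).
\]
Letting $R\to+\infty$ at fixed $\varphi$, the integrand $e^{\tau f_{R,c,\varphi}}$ is pointwise dominated by $e^{\|\varphi\|_\infty}e^{c\tau}\in L^1(\psi)$, so dominated convergence forces $\psi(e^{\tau f_{R,c,\varphi}})\to\psi(e^\varphi)=1$ and $\pi(f_{R,c,\varphi})\to\pi(\varphi/\tau)$; the log term drops out and we get
\[
\varliminf_n \pi_n(1/\tau)\H(\tilde\pi_n|\psi_n)\ge \beta\pi(\varphi/\tau)+(1-\beta)c.
\]
A routine $C_c$-to-$C_b$ density argument together with Lemma~\ref{algfact} identifies $\sup\{\pi(\varphi/\tau):\varphi\in C_c,\,\psi(e^\varphi)=1\}$ with $\pi(1/\tau)\H(\tilde\pi|\psi)$, and finally $c\uparrow\xi$ produces the remaining summand $(1-\beta)\xi$.

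The main technical obstacle is the convergence $\psi_n(e^{\tau f_{R,c,\varphi}})\to\psi(e^{\tau f_{R,c,\varphi}})$: the exponent grows linearly as $c\tau$ near $+\infty$, so the narrow convergence $\psi_n\rightharpoonup\psi$ is insufficient on its own. One must exploit $\xi_n\to\xi$, typically by truncating $e^{c\tau}$ at a large threshold $T$ (where narrow convergence applies to the bounded piece) and then sending $T\to+\infty$ along a diagonal using uniform-in-$n$ tail estimates $\int_{\tau>T}e^{c\tau}\,d\psi_n\to 0$ extracted from the fact that $c<\xi$ is strictly below $\xi_n$ for all large $n$. This is the one place in the argument where the full strength of the hypothesis on $\xi_n$ is essential, as opposed to mere weak convergence of the $\psi_n$.
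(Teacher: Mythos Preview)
Your overall approach matches the paper's: both plug into the variational formula of Lemma~\ref{algfact} a test function of the shape $f(\tau)=\varphi(\tau)/\tau+c\,\chi(\tau)$ with $\chi$ a smooth cutoff equal to $1$ near $+\infty$, then let $n\to\infty$, send the cutoff threshold to infinity, and finally optimise over $\varphi$ and $c<\xi$. The one structural difference is how the term $-\pi_n(1/\tau)\log\psi_n(e^{\tau f})$ is treated. The paper chooses $\varphi$ with $\psi(e^\varphi)<1$ (strictly), so that for $M$ large $\psi(e^{\tau f})<1$, and then argues that $\psi_n(e^{\tau f})<1$ for $n$ large as well. The logarithm is then negative, the whole term is nonnegative and is simply \emph{discarded}. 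This makes your preliminary step (bounding $\pi_n(1/\tau)$ and extracting a subsequential limit $\alpha$) unnecessary, and replaces the exact convergence $\psi_n(e^{\tau f})\to\psi(e^{\tau f})$ you need by the weaker one-sided requirement $\psi_n(e^{\tau f})<1$.

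The gap in your version is precisely that convergence step. Your suggested justification --- uniform-in-$n$ tail estimates $\int_{\tau>T}e^{c\tau}\,d\psi_n\to 0$ ``extracted from the fact that $c<\xi$ is strictly below $\xi_n$'' --- does not follow from the hypotheses: $c<\xi_n$ only guarantees that each $\psi_n(e^{c\tau})$ is finite, not that they are uniformly bounded or uniformly integrable. For instance, with $\psi_n=(1-\tfrac1n)\psi+\tfrac1n\delta_n$ one has $\psi_n\rightharpoonup\psi$ and $\xi_n=\xi$, yet $\int_{\tau>T}e^{c\tau}\,d\psi_n\ge \tfrac1n e^{cn}\to+\infty$ for every fixed $T$ and $c>0$, so the diagonal-in-$T$ argument you sketch cannot close. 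Adopting the paper's normalisation $\psi(e^\varphi)<1$ is the cleaner route: it eliminates the need to track $\alpha=\lim_n\pi_n(1/\tau)$ and reduces what must be shown about $\psi_n$ to an eventual upper bound rather than a limit.
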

\begin{proof}
By Lemma~\ref{algfact}
\begin{equation}
\label{e:algfact}
\begin{split}
\pi_n(1/\tau) \H(\tilde \pi_n|\psi_n) 
 & = \sup_{f} \left(\pi_n(f) -\pi_n(1/\tau)\log\psi_n (e^{\tau f})\right)
\end{split}
\end{equation}
where the supremum is carried over continuous functions $f$ bounded from below and such that $\pi_n(f)<+\infty$.

Fix a $\varphi \in C_b( ]0,+\infty[)$ such that $\psi(e^\varphi)<1$. Fix also $c\in [0,\xi[$ if $\xi>0$ or take $c=0$ if $\xi=0$. For an arbitrary 
$M>0$, let $\chi_M$ be a smooth function on $]0,+\infty[$ such that
\begin{eqnarray*}
& & \chi_M(\tau)=1 \qquad \text{for $\tau \le 1/(M+1)$ or $\tau \ge M+1$},
\\
& & \chi_M(\tau)=0\qquad \text{for $1/M \le \tau \le M$}.
\end{eqnarray*}
Since $\psi(e^\varphi)<1$, there exists $M'\equiv M'(\varphi,c)$ such that 
\begin{equation*}
\psi(e^{c \,\tau \,\chi_M + \varphi\, (1-\chi_M) })<1 \qquad \qquad \text{$M\ge M'(\varphi,c)$}
\end{equation*}
and since $\psi_n \to \psi$ and $\xi_n \to \xi>c$ (the case $\xi=0$ is easily taken care), for $n$ large enough depending on $M$, $\varphi$ 
and $c$
\begin{equation}
\label{e:Mlarge}
\psi_n(e^{c \,\tau \,\chi_M + \varphi\, (1-\chi_M) })<1 \qquad \qquad \text{$M\ge M'(\varphi,c)$ and $n$ large enough.}
\end{equation}
Now, in \eqref{e:algfact} consider a $f$ of the form $f(\tau)=c \,\chi_M(\tau) + \varphi(\tau)\, (1-\chi_M(\tau))/\tau$, which is allowed for $n$ large 
enough such that \eqref{e:Mlarge} holds. Then the logarithm in the right hand side of \eqref{e:algfact} is negative, and therefore recalling 
\eqref{e:pinconv}
\begin{equation*}
\begin{split}
\varliminf_n \pi_n(1/\tau) \H(\tilde \pi_n|\psi_n) 
 & \ge \varliminf_n \pi_n(\varphi (1-\chi_M)/\tau) + \pi_n(c \chi_M)
\\
& = \beta \pi(\varphi (1-\chi_M)/\tau ) + \beta\,c\,\pi(\chi_M)+  (1-\beta)\, c .
\end{split}
\end{equation*}
Taking the limit $M\to \infty$, since $\pi(1/\tau)<+\infty$ and $\pi(\{+\infty\})=0$, by dominated convergence
\begin{equation*}
\varliminf_n \pi_n(1/\tau) \H(\tilde \pi_n|\psi_n) 
 \ge  \beta \pi(\varphi/\tau) + (1-\beta)\,c.
\end{equation*}
Optimizing over $c<\xi$ and $\varphi$ such that $\psi(e^\varphi)<1$
\begin{equation}
\label{e:pinpin}
\begin{split}
\varliminf_n \pi_n(1/\tau) \H(\tilde \pi_n|\psi_n) 
 & \ge \sup_{c < \xi} \sup_{\varphi}  \beta \pi(\varphi/\tau) + (1-\beta)\,c
\\
& = \beta \sup_{\varphi} \pi(\varphi/\tau) + (1-\beta)\xi.
\end{split}
\end{equation}
Still by Lemma \eqref{algfact}
\begin{equation*}
\begin{split}
\sup_{\psi(e^\varphi)<1} \pi(\varphi/\tau)  
& =\sup_{
a<1} \sup_{\psi(e^\varphi)=a } \pi(\varphi/\tau) 
\\ & 
= \sup_{a<1} \big[
\log a + \sup_{\psi(e^\varphi)=a} \pi(\varphi/\tau) - \log \psi(e^\varphi) \big]
\\ & 
= \sup_{a<1}
\log a +  \pi(1/\tau) \H(\tilde \pi|\psi)= \pi(1/\tau) \H(\tilde \pi|\psi)
\end{split}
\end{equation*}
which concludes the proof in view of \eqref{e:pinpin}.
\end{proof}

\begin{proof}[Proof of Proposition~\ref{p:igood}-(2)]
First note that it is enough to prove the statement for a subsequence of $(\mu_n)$, and subsequences will be often indexed by the same $n$ 
in this proof. Therefore one can assume $\sup_n \I_n(\mu_n) <+\infty$, the statement being trivial otherwise. Thus, up to passing to a 
subsequence, $\mu_n \in \Delta$ and according to \eqref{e:muomega0}-\eqref{e:muomega} one can write
\begin{equation}
\label{e:munmu0}
\begin{split}
& \mu_n= \alpha_n 
\mu_{0,n} + (1-\alpha_n)\, \delta_{(+\infty,+\infty)},
\\
& \mu_{0,n}:=\int_{[0,1]\times]0,+\infty[} \delta_{(u\tau,(1-u)\tau)}\, du \otimes \pi_n(d\tau),
\end{split}
\end{equation}
for some $\alpha_n \in [0,1]$ and $\pi_n \in \mc P(]0,+\infty[)$ with $\pi_n(1/\tau)<+\infty$. If $\varlimsup_n \alpha_n = 0$, then $\mu=\lim_n 
\mu_n= \delta_{(+\infty,+\infty)}$ and therefore
\begin{equation*}
\varliminf_n \I_n(\mu_n) \ge \varliminf_n (1-\alpha_n)\xi_n = \xi = \I(\mu).
\end{equation*}
Let us turn to the case $\varlimsup_n \alpha_n =: \bar \alpha >0$. Up to passing to a subsequence, one can assume $\lim_n \alpha_n=\bar 
\alpha>0$. Since $\sup_n \I_n(\mu_n) <+\infty$, the bound on \eqref{e:pibound} holds, and since $\bar \alpha>0$ it yields
\begin{equation*}
\varlimsup_n \pi_n(1/\tau)<+\infty.
\end{equation*}
In particular $\pi_n$ is tight in $\mc P(]0,+\infty])$ (note that $+\infty$ is and should be included here). Thus, up to passing to a further 
subsequence
\begin{equation}
\label{e:pinconvbeta}
\lim_n \pi_n = \beta \pi + (1-\beta)\delta_{+\infty}
\end{equation}
for some $\beta \in [0,1]$. If $\beta>0$ by \eqref{e:pibound}
\begin{equation*}
\label{e:pinconv2}
\pi(1/\tau) \le  \frac{1}{\beta} \varlimsup_n \pi_n(1/\tau)<+\infty
\end{equation*}
while one can choose an arbitrary $\pi$ satisfying $\pi(1/\tau)<+\infty$ if $\beta=0$. In particular the conditions of Lemma~\ref{l:realineq} are 
fulfilled, and therefore \eqref{e:realineq} holds.

Patching \eqref{e:munmu0} and \eqref{e:pibound} together
\begin{equation*}
\begin{split}
& \mu = \lim_n \mu_n = \bar \alpha \beta \mu_0 + (1-\bar \alpha \beta) \delta_{(+\infty,+\infty)},
\\
& \mu_{0}:=\int_{[0,1]\times]0,+\infty[} \delta_{(u\tau,(1-u)\tau)}\, du \otimes \pi(d\tau).
\end{split}
\end{equation*}
In particular $\mu \in \Delta$ with $\alpha=\bar \alpha \beta$. And recalling $\alpha_n \to \bar \alpha$ and $\xi_n \to \xi$
\begin{equation*}
\begin{split}
\I(\mu) 
& =\bar \alpha \beta \pi(1/\tau) \H(\tilde \pi|\psi) + (1-\bar \alpha \beta) \xi
\\ & 
=
\bar \alpha \big[\beta \pi(1/\tau) \H(\tilde \pi|\psi) + (1-\beta)\xi \big]
+   (1-\bar \alpha )\xi 
\\ & 
= \bar \alpha \pi_n(1/\tau) \H(\tilde \pi_n|\psi_n)  + (1-\bar \alpha) \xi
\\ & \phantom{=}
+ \bar \alpha \big [\beta \pi(1/\tau) \H(\tilde \pi|\psi) + (1-\beta)\xi 
- \pi_n(1/\tau) \H(\tilde \pi_n|\psi_n)  \big]
\\ & \le \varliminf_n \I_n(\mu_n)
  + \bar \alpha\varliminf_n  \big [\beta \pi(1/\tau) \H(\tilde \pi|\psi) + (1-\beta)\xi 
- \pi_n(1/\tau) \H(\tilde \pi_n|\psi_n)  \big].
\end{split}
\end{equation*}
The limit in square brackets in the last line is negative, by \eqref{e:pinconvbeta} and Lemma~\ref{l:realineq}. The wanted inequality follows.
\end{proof}

\begin{proof}[Proof of Proposition~\ref{p:igood}-(3)] Since $\I(\mu)<+\infty$, $\mu \in \Delta$ and let $\alpha$ and $\pi$ be as in 
\eqref{e:muomega0}-\eqref{e:muomega} (again, the choice of $\pi$ is not relevant if $\alpha =0$).

Fix $\delta >0,\,L>M>1$ such that $\psi(\{1/M\}) = \psi(\{M\})=0 = \psi(\{L\})=0$. Then there exist $N \in \mathbb N$ and $1/M = T_1 < T_2<\ldots 
<T_N = M$ such that $T_{i+1}-T_i \le \delta$ and $\psi(\{T_i\})=0$ for all $i=1,\ldots,N$. Here of course $N \equiv N(M,\delta)$ and $T_i \equiv 
T_i(M,\delta)$; we also use the shorthand notation $A_i=[T_i,T_{i+1}[$ and $A = \cup_{i=1}^N A_i$ in this proof. Then for $L>M$ define $
\pi_n^{\delta,M,L}(d\tau) \in \mc P(]0,+\infty[)$ as
\begin{equation*}
\pi_n^{\delta,M,L}(d\tau)= \frac{\tau \tilde \pi_n^{\delta,M,L}(d\tau)}{\tilde \pi_n^{\delta,M,L}(\tau)},
\end{equation*}
\begin{equation*}
\tilde \pi_n^{\delta,M,L}(d\tau) =\alpha \sum_{i=1}^N \frac{\tilde \pi(A_i)}{\tilde \pi(A)} \psi_n (d\tau | A_i)
+ (1-\alpha) \psi_n(d\tau | [M,L[).
\end{equation*}
The above definition is well posed if $L>M$ is large enough, and $n$ is large enough depending on $L$ and $M$ ($n$ will be sent to $+\infty
$ before $L$, and $L$ before $M$). Indeed, since $\I(\mu)<+\infty$ and $\psi(\partial A_i)=0$, if $\psi_n(A_i)=0$ for $n$ large, then $\pi 
(A_i)=0$, and similarly if $\psi_n([M,L[)=0$ then $\alpha=1$.

We want to prove
\begin{equation}
\label{e:pin1}
\lim_{M\to +\infty} \lim_{L\to +\infty} \lim_{\delta \downarrow 0} \lim_n \pi_n^{\delta,M,L}(d\tau) =  \alpha \pi + (1-\alpha) \delta_{+\infty},
\end{equation}
\begin{equation}
\label{e:pin2}
\varliminf_M \varliminf_L \varliminf_\delta \varlimsup_n \pi_n^{\delta,M,L}(1/\tau) \H( \tilde \pi_n^{\delta,M,L} | \psi_n)
 \le \alpha \pi(1/\tau) \H( \tilde \pi | \psi)+ (1-\alpha) \xi = \I(\mu),
\end{equation}
where the limits in $M$ and $L$ are understood to run over $M$ and $L$ satisfying the above conditions.

Indeed, once \eqref{e:pin1}-\eqref{e:pin2} are proved, one can extract subsequences $\delta_n \to 0$, $L_n,\,M_n \to +\infty$ such that,
 defining $\pi_n:= \pi_n^{\delta_n,M_n,L_n}$, one has $\pi_n \rightharpoonup \pi$ and also $\varlimsup_n
 \pi_n(1/\tau) \H(\tilde \pi_n| \psi_n) \le \I(\mu)$. It is then easy to verify that the sequence $(\mu_n)$ defined by \begin{equation*}
\mu_n:=\int_{[0,1]\times]0,+\infty[} \delta_{(u\tau,(1-u)\tau)}(da,db)\, du \otimes \pi_n(d\tau)
\end{equation*}
fullfills the wanted requirements.

Note that the convergence $\tilde \pi_n^{\delta,M,L}  \rightharpoonup \tilde \pi$ is immediate, so that \eqref{e:pin1} readily follows. In order to 
prove \eqref{e:pin2} define
\begin{equation*}
\begin{split}
& \tilde \pi_{0,n}(d\tau) \equiv \tilde \pi_{0,n}^{\delta,M} (d\tau)= \sum_{i=1}^N \frac{\tilde \pi(A_i)}{\tilde \pi(A)}
  \psi_n (d\tau | A_i),
\\ &
\tilde \pi^{\delta,M}_{0}(d\tau) \equiv \tilde \pi^{\delta,M}_{0}(d\tau) = \sum_{i=1}^N \frac{\tilde \pi(A_i)}{\tilde \pi(A)}
  \psi (d\tau | A_i).
\end{split}
\end{equation*}
By the convexity statement in Lemma~\ref{algfact}
\begin{equation}
\label{e:ient1}
\begin{split}
& \pi_n^{\delta,M,L}(1/\tau) \H(\tilde \pi_n^{\delta,M,L}| \psi_n) 
 = \frac{1}{\tilde \pi_n^{\delta,M,L}(\tau)} \H(\tilde \pi_n^{\delta,M,L}| \psi_n)  
\\ & \qquad  
\le \alpha \frac{1}{
\tilde \pi_{0,n}(\tau)}   \H(\tilde \pi_{0,n}| \psi_n)+ (1-\alpha) \frac{1}{\psi_n(\tau|[M,L[)} \H( \psi_n(\cdot|[M,L[)|\psi_n).
\end{split}
\end{equation}
All the terms above can be explicitly calculated. In particular, since $\psi(\{M\})=\psi(\{L\})=0$,
\begin{equation}
\label{e:ient2}
\lim_n \frac{1}{\psi_n(\tau|[M,L[)} \H( \psi_n(\cdot|[M,L[)|\psi_n) = \frac{1}{\psi(\tau|[M,L[)} \H( \psi(\cdot|[M,L[)|\psi),
\end{equation}  
and it is easy to check that
\begin{equation}
\label{e:ient3}
\begin{split}
& \varliminf_M \varliminf_L
  \frac{1}{\psi(\tau|[M,L[)} \H( \psi(\cdot|[M,L[)|\psi)
 \le  - \varlimsup_M \frac{1}{M} \log \psi([M,+\infty[) = \xi.
\end{split}
\end{equation}
On the other hand, since $\psi(\partial A_i)=0$, one has
\begin{equation*}
\begin{split}
& \lim_n \pi_{0,n}(\tau) \to \tilde \pi_0(\tau),
\qquad \lim_{\delta \downarrow 0} \tilde \pi_0(\tau)= \tilde \pi(\tau|[1/M,M[),
\\
& \lim_{M \to +\infty}\tilde \pi(\tau | [1/M,M[)= \tilde \pi(\tau),
\end{split}
\end{equation*}
namely
\begin{equation}
\label{e:ient4}
\lim_{M\to +\infty} \lim_{\delta \downarrow 0} \lim_n \tilde \pi_{0,n}(\tau)=\tilde \pi(\tau),
\end{equation}
and
\begin{equation*}
\begin{split}
 \lim_n  \H(\tilde \pi_{0,n}| \psi_n) & = \H(\tilde \pi_0 | \psi) = \sum_{i=1}^N
\frac{\tilde \pi(A_i)}{\tilde \pi(A)} \log  \frac{\tilde \pi(A_i)}{\tilde \pi(A) \psi(A_i)}
 \\ &
=  \frac{1}{\tilde \pi(A)} 
\Big[\tilde \pi(A^c) \log \frac{\tilde \pi(A^c)}{ \psi(A^c)}+ \sum_{i=1}^N \tilde \pi(A_i) \log  \frac{\tilde \pi(A_i)}{ \psi(A_i)}\Big]
\\ & \phantom{=}
- \big[ \log  \tilde \pi(A) + \tilde \pi(A^c) \log \frac{\tilde \pi(A^c)}{ \psi(A^c)}]
\\ &
\le \H(\tilde \pi|\psi) - \big[ \log  \tilde \pi(A) + \tilde \pi(A^c) \log \pi(A^c)].
\end{split} 
\end{equation*}
The term in square brackets in the last line above vanishes as $M \to +\infty$, so that
\begin{equation}
\label{e:ient5}
\varlimsup_M \sup_{\delta<1} \lim_n  \H(\tilde \pi_{0,n}| \psi_n) \le  \H(\tilde \pi \,|\, \psi). 
\end{equation}
The inequality \eqref{e:pin2} finally follows from \eqref{e:ient1}, \eqref{e:ient2}, \eqref{e:ient3}, \eqref{e:ient4}, \eqref{e:ient5}.
\end{proof}
This concludes the proof of Proposition~\ref{p:igood}. We end this section with some additional results concerning the functional $\I$ which 
will come useful in the following.

\begin{lemma}\label{closed}
The set $\Delta$ defined in \eqref{e:muomega} is closed
in $\cP(]0,+\infty]^2)$.
\end{lemma}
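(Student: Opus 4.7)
The plan is to take a narrowly convergent sequence $\mu_n \in \Delta$ with limit $\mu \in \mc P(]0,+\infty]^2)$ and exhibit an explicit representation of $\mu$ in the form \eqref{e:muomega}. Writing $\mu_n = \alpha_n \mu_{0,n} + (1-\alpha_n)\delta_{(+\infty,+\infty)}$ with $\mu_{0,n} \in \Delta_0$ determined by some $\pi_n \in \mc P(]0,+\infty[)$ satisfying $\pi_n(1/\tau)<+\infty$, I would first extract a subsequence so that $\alpha_n\to\alpha\in[0,1]$. If $\alpha=0$, since $\|\alpha_n \mu_{0,n}\|\le \alpha_n\to 0$ in total variation, $\mu_n\to\delta_{(+\infty,+\infty)}$ narrowly, which is trivially in $\Delta$.

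In the main case $\alpha>0$, I would exploit the continuous bounded function $(a,b)\mapsto 1/(a+b)$ on $]0,+\infty]^2$ (with the convention $1/\infty=0$) to get
\[
\alpha_n \pi_n(1/\tau) = \mu_n(1/(a+b)) \;\xrightarrow[n\to\infty]{}\; \mu(1/(a+b)) < +\infty,
\]
hence $\pi_n(1/\tau)\to \mu(1/(a+b))/\alpha$. By Markov's inequality this uniform bound forces tightness of $(\pi_n)$ in $\mc P(]0,+\infty])$ (note: compactification only at $+\infty$, since mass near $0$ is controlled precisely by $1/\tau$). Passing to a further subsequence, $\pi_n\rightharpoonup\tilde \pi = \beta \pi + (1-\beta)\delta_{+\infty}$ for some $\beta\in[0,1]$ and some $\pi\in\mc P(]0,+\infty[)$; Fatou's lemma together with the convergence of $\pi_n(1/\tau)$ yields $\pi(1/\tau)<+\infty$ (when $\beta>0$; otherwise $\pi$ is arbitrary).

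Finally, I would identify $\mu$ by pushing forward through the map $\Phi\colon [0,1]\times ]0,+\infty] \to ]0,+\infty]^2$, $\Phi(u,\tau):=(u\tau,(1-u)\tau)$. Off the Lebesgue-null set $\{u\in\{0,1\}\}$, $\Phi$ is continuous (in particular $\Phi(u,+\infty)=(+\infty,+\infty)$ for $u\in(0,1)$), so narrow continuity of the pushforward gives
\[
\mu_{0,n}=\Phi_*\bigl(du\otimes\pi_n\bigr) \;\rightharpoonup\; \Phi_*\bigl(du\otimes\tilde\pi\bigr) = \beta\,\mu_0 + (1-\beta)\,\delta_{(+\infty,+\infty)},
\]
where $\mu_0\in\Delta_0$ is the element associated with $\pi$ in \eqref{e:muomega0}. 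Combining,
\[
\mu = \lim_n \mu_n = \alpha\beta\,\mu_0 + (1-\alpha\beta)\,\delta_{(+\infty,+\infty)} \in \Delta.
\]

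The main subtlety lies in step two: when compactifying the space on which $\pi_n$ lives, mass may escape to $+\infty$, producing the extra factor $\beta$; this is precisely why the parameter $\alpha$ in the limit is $\alpha\beta$ and not simply $\alpha$. Handling this carefully—and verifying that $\Phi$ behaves well enough under narrow convergence despite being ill-defined on the measure-zero set $\{u=0,1\}$—is the only non-routine point.
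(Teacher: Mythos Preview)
Your overall strategy---extract $\alpha$, show tightness of $(\pi_n)$ in $\mc P(]0,+\infty])$, then identify the limit via the pushforward $\Phi_*(du\otimes\pi_n)$---is sound and parallels the paper's argument, which uses Skorohod's representation theorem in place of your continuous-mapping step. Both routes arrive at $\mu=\alpha\beta\,\mu_0+(1-\alpha\beta)\,\delta_{(+\infty,+\infty)}$.

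There is, however, a genuine gap in your tightness step. The function $(a,b)\mapsto 1/(a+b)$ is continuous on $]0,+\infty]^2$ but \emph{not bounded}: the space $]0,+\infty]^2$ is isometric to $[0,+\infty[^2$ via $(a,b)\mapsto(1/a,1/b)$, so it contains points with $a+b$ arbitrarily small. Hence narrow convergence $\mu_n\rightharpoonup\mu$ does \emph{not} yield $\mu_n(1/(a+b))\to\mu(1/(a+b))$, and neither the uniform bound on $\pi_n(1/\tau)$ nor the conclusion $\pi(1/\tau)<+\infty$ follows from your argument. A concrete obstruction: with $\pi_n(d\tau)\propto \tau^{-1/2}\un{(1/n,1)}(\tau)\,d\tau$ one has $\pi_n(1/\tau)\to+\infty$, yet $\mu_{0,n}\in\Delta_0$ converges in $\mc P(]0,+\infty]^2)$.

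The tightness of $(\pi_n)$ away from $0$ is easily repaired without the unbounded test function: push $\mu_n$ forward under the continuous map $(a,b)\mapsto a+b\in\,]0,+\infty]$ to get $\alpha_n\pi_n+(1-\alpha_n)\delta_{+\infty}\rightharpoonup$ (pushforward of $\mu$) in $\mc P(]0,+\infty])$; since $\alpha_n\to\alpha>0$ this forces $\sup_n\pi_n(]0,\delta[)\to 0$. The paper instead compactifies at both endpoints, letting $\pi_n\rightharpoonup\pi$ in $\mc P([0,+\infty])$ and ruling out mass at $0$ a~posteriori from $\mu(\{(0,0)\})=0$. As for the requirement $\pi(1/\tau)<+\infty$ in the definition of $\Delta_0$, note that the paper's own proof does not verify it either; in the only place the lemma is invoked (the upper bound, always together with the constraint $\mu(1/(a+b))\le M$) this does not cause trouble.
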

\begin{proof}
Let $\mu_n\in\Delta$ such that
$\mu_n\rightharpoonup\mu\in\cP(]0,+\infty]^2)$ in $]0,+\infty]^2$, with $\mu_n$ given by \eqref{e:muomega0}-\eqref{e:muomega} with $
\alpha_n\in[0,1]$ and $\pi_n\in\cP(]0,+\infty[)$.
We can assume that $\alpha_n$ converges to some $\bar \alpha$ and $\pi_n\rightharpoonup\pi\in\cP([0,+\infty])$. By Skorohod's 
representation theorem, there exists a sequence $(P_n)_n$ of random variables such that $P_n$ has law $\pi_n$, $P_n\in\, ]0,+\infty[$ 
converges a.s. to $P\in[0,+\infty]$ and $P$ has law $\pi$. If $U$ is uniform on $[0,1]$ and independent of $(P_n)_n$ then
for any $f\in C_b([0,+\infty]^2)$ we obtain that
\begin{equation*}
\begin{split}
\mu_n(f) & =\alpha_n\E(f(UP_n,(1-U)P_n))+(1-\alpha_n)f(+\infty,+\infty)
\\ & \to
\alpha\E(f(UP,(1-U)P))+(1-\alpha)\, f(+\infty,+\infty)
\end{split}
\end{equation*}
and this limit must be equal to $\mu(f)$. Since $\mu\in\cP(]0,+\infty]^2)$, then $\bbP(P=0)=0$. If $\bbP(P<+\infty)\in\{0,1\}$ then $\mu\in\Delta
$. If $\beta:=\bbP(P<+\infty)\in\,]0,1[$ then
\begin{equation*}
\mu(f) = \alpha\beta \, \E(f(UP,(1-U)P)\, | \, P<+\infty) +
(1-\alpha\beta)\, f(+\infty,+\infty)
\end{equation*}
and therefore $\mu\in\Delta$.
\end{proof}

For a bounded measurable $f\colon ]0,+\infty[\,\times\,]0,+\infty[ \to\R$ set
\begin{equation}\label{overf}
\overline f(r,\tau):=\int_0^r f(u\tau,(1-u)\tau)\, du, \qquad
r\in[0,1], \, \tau>0.
\end{equation}

Let $\Gamma$ be the set of all bounded lower semicontinuous $f\colon  ]0,+\infty]\,\times\,]0,+\infty] \to\R$ such that
\begin{equation}
\label{calpha}
C_f:=
\int_{]0,+\infty[} \psi(d\tau)\, e^{\tau\overline f(1,\tau)} < 1.
\end{equation}
\begin{equation}
\label{e:fsmall}
D_f:=\sup_{s>0} \int_{]s,+\infty[} \psi(d\tau)\,e^{\tau \overline f(s/\tau,\tau) }   <+\infty.
\end{equation}

\begin{lemma}
\label{l:3.4}
For all $\mu \in \Delta$
\begin{equation} \label{e:muf}
  \I(\mu) \le \sup_{f\in\Gamma} \mu(f)
\end{equation}
\end{lemma}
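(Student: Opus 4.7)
The plan is to produce, for any $\epsilon>0$, a function $f\in\Gamma$ with $\mu(f)\ge \I(\mu)-\epsilon$. It will be convenient to restrict to test functions of the form $f(a,b)=h(a+b)$ with $h\in C_b(]0,+\infty])$. For such $f$ a direct computation using the explicit structure of $\mu\in\Delta$ yields
\begin{equation*}
\mu(f)=\alpha\,\pi(1/\tau)\,\tilde\pi(g)+(1-\alpha)\,h(+\infty),\qquad g(\tau):=\tau h(\tau),
\end{equation*}
while $C_f=\psi(e^g)$ and $\tau\bar f(s/\tau,\tau)=s\,h(\tau)$, so that $D_f=\sup_{s>0}\int_s^{+\infty}e^{s h(\tau)}\psi(d\tau)$.

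The function $g$ will be designed to match simultaneously the two variational pieces inside $\I(\mu)$: the entropy $\pi(1/\tau)\H(\tilde\pi|\psi)$ via Lemma~\ref{algfact}, and the quantity $(1-\alpha)\xi$ via the definition \eqref{e:xiphi} of $\xi$. Concretely, given $\epsilon>0$ I would pick $\varphi\in C_b(]0,+\infty[)$ with $\psi(e^\varphi)<1$ and $\tilde\pi(\varphi)\ge\H(\tilde\pi|\psi)-\epsilon$ (possible by Lemma~\ref{algfact}; if $\alpha=0$ one can simply take $\varphi\equiv-\eta$ for small $\eta>0$) and pick $c\in[0,\xi[$ with $c\ge\xi-\epsilon$. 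Choosing a continuous cutoff $\chi_M\colon\,]0,+\infty]\to[0,1]$ equal to $1$ on $]0,M]$ and to $0$ on $[M+1,+\infty]$, I would set
\begin{equation*}
g(\tau):=\varphi(\tau)\chi_M(\tau)+c\,\tau\,(1-\chi_M(\tau)),\qquad h(\tau):=g(\tau)/\tau,
\end{equation*}
so that $f(a,b):=h(a+b)$ is bounded continuous on $]0,+\infty]^2$ with $h(+\infty)=c$.

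As $M\to+\infty$ with $\varphi$ and $c$ fixed, the tail contribution $\int_{M+1}^{+\infty}e^{c\tau}\psi(d\tau)$ vanishes since $c<\xi$, and the transition region $[M,M+1]$ is controlled by the same exponential moment; thus $\psi(e^g)\to\psi(e^\varphi)<1$, so that $C_f<1$ eventually. At the same time $\tilde\pi(g)\to\tilde\pi(\varphi)$ by dominated convergence, using $\tilde\pi(\tau)=1/\pi(1/\tau)<+\infty$ together with \eqref{e:tildepi/tau}.

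The main obstacle I anticipate is verifying $D_f<+\infty$. To handle this, I would fix $c<c'<\xi$ and split $\int_s^{+\infty}e^{s h(\tau)}\psi(d\tau)$ into three pieces: on $\tau\in[s,M]$ one has $s h(\tau)=s\varphi(\tau)/\tau\le\|\varphi\|_\infty$ because $s\le\tau$; on the transition region $[M,M+1]$ one bounds $s h(\tau)\le sc+s\|\varphi\|_\infty/M$ and absorbs it using $\psi([M,M+1])\le e^{-c'M}\psi(e^{c'\tau})$; and on $\tau\ge M+1$ one has $s h(\tau)=sc$, with $e^{sc}\psi([s,+\infty[)\le e^{-(c'-c)s}\psi(e^{c'\tau})$ uniformly bounded in $s$. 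This is precisely where the strict inequality $c<\xi$ is essential. Once these estimates are in place, $\mu(f)\to\alpha\,\pi(1/\tau)\,\tilde\pi(\varphi)+(1-\alpha)c\ge\I(\mu)-C\epsilon$ for a constant $C=\pi(1/\tau)+1$, and letting $\epsilon\downarrow 0$ finishes the proof.
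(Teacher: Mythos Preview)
Your approach is essentially the paper's own: both build test functions of the form $f(a,b)=\varphi(a+b)/(a+b)+c\cdot(\text{cutoff near }+\infty)$ with $\psi(e^\varphi)<1$ and $c<\xi$, then optimize over $\varphi$ and $c$ via Lemma~\ref{algfact}. The paper simply uses the indicator $\un{]M,+\infty]}$ (l.s.c.\ is all $\Gamma$ demands) and bounds $D_f$ in one stroke by $e^{\|\varphi\|_\infty}\psi(e^{c\tau})$, while you smooth the cutoff and split into cases; both routes succeed.

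There is one small but genuine oversight: you take $\varphi\in C_b(]0,+\infty[)$ and set $h(\tau)=\varphi(\tau)/\tau$ on $]0,M]$, which is in general \emph{unbounded} near $\tau=0$, so your $f$ need not lie in $\Gamma$. The paper handles this by taking $\varphi\in C_\mathrm{c}(]0,+\infty])$, i.e.\ supported away from~$0$. You can do the same---replace $\varphi$ by $\varphi\,\eta_\delta$ where $\eta_\delta$ vanishes on $]0,\delta]$---and nothing else changes: by dominated convergence $\tilde\pi(\varphi\eta_\delta)\to\tilde\pi(\varphi)$ and $\psi(e^{\varphi\eta_\delta})\to\psi(e^\varphi)<1$ as $\delta\downarrow 0$, so for $\delta$ small enough all your estimates go through.
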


\begin{proof}
  Let $\varphi \in C_\mathrm{c}(]0,+\infty])$, $c<\xi$ if $\xi>0$ and $c:=0$ if $\xi=0$ and
  $M>0$. Let
\begin{equation*}
   f_{c,\varphi,M}(a,b):=   \frac{\varphi(a+b)}{a+b} + c\,\un {]M,+\infty]}(a+b),
   \qquad (a,b)\in ]0,+\infty]^2.
  \end{equation*}
Then $f_{c,\varphi,M}$ is lower semicontinuous on $]0,+\infty]^2$ and
\begin{equation*}
\overline  f_{c,\varphi,M}(r,\tau):= r\left( \frac{\varphi(\tau)}\tau + c\,\un {]M,+\infty]}(\tau)\right), \qquad \tau>0, \, r\in[0,1].
\end{equation*}
Then
\begin{equation*}
  \begin{split}
 \int_{[s,+\infty[} \psi(d\tau)\,e^{\tau\overline f_{c,\varphi,M}(s/\tau,\tau)} 
& = \int_{[s,+\infty[} \psi(d\tau)\,\exp\left(\frac s\tau\left(\varphi(\tau) + c \tau\un {]M,+\infty]}(\tau)\right)\right)
\\ &  \le  e^{\|\varphi\|_{\infty}}  \psi(e^{c\tau})
 \end{split}
\end{equation*}
which is bounded uniformly in $s$, so that \eqref{e:fsmall} holds for
$f=f_{c,\varphi,M}$. Let now $a<1$. If
\begin{equation}\label{e:varphismall}
\psi(e^\varphi) =a<1
\end{equation}
then there exists $M_0=M_0(c,\varphi)$ such that for all $M>M_0$
\begin{equation*}
C_{f_{c,\varphi,M}} = \psi\left( e^{\varphi + c\tau 1_{]M,+\infty]}}\right)<1
\end{equation*}
and therefore $f_{c,\varphi,M}\in\Gamma$.
Now, if $\mu$ is given by \eqref{e:muomega0}-\eqref{e:muomega} then
\begin{equation*}
\mu(f_{c,\varphi,M}) =
 {\alpha}\,\pi(\varphi/\tau) + c\,(1-\alpha).
 \end{equation*}
 Since $\pi(\varphi/\tau)=\pi(1/\tau) \,\tilde\pi(\varphi)$ then (with the usual convention $0\cdot\infty=0$)
 \begin{equation*}
 \begin{split}
      \sup_{f\in\Gamma} \mu(f) &  \geq
    \sup_{\varphi} \sup_{c,m}\sup_{M} \mu(f_{c,\varphi,M})
    \\
 & = \alpha\,\pi(1/\tau)\, \sup_{\varphi} \left\{\tilde\pi(\varphi)-\log\psi(e^\varphi)\right\} + \pi(1/\tau)\, \log a+(1-\alpha)\, \xi
\end{split}
\end{equation*}
where in the right hand side, the supremum on $M$ is performed
over $]M_0(c,\varphi),+\infty[$, the supremum on $c$ over $[0,\xi[$ and the supremum on $\varphi$ over $\varphi \in C_\mathrm{c}(]0,+\infty])$
satisfying \eqref{e:varphismall}. By Lemma~\ref{algfact} the supremum over $\varphi$ satisfying
\eqref{e:varphismall} does not depend on $a$ and the first term equals $\alpha\, \pi(1/\tau)\H(\tilde\pi\,|\,\psi)$, so that
optimizing over $a$
\begin{equation*}
\sup_{f\in\Gamma} \mu(f)   \geq \sup_{a<1} \left\{\alpha\, \pi(1/\tau)\H(\tilde\pi\,|\,\psi)+(1-\alpha)\xi
 + \alpha\,\pi(1/\tau)\, \log a\right\} = \I(\mu).
\end{equation*}
\end{proof}

\section{Upper bound}
In this section we prove the upper bound \eqref{upper} in Theorem 
\ref{t:ld1}.

\subsection{Exponential tightness}
\begin{lemma}
\label{l:3.2}
\begin{equation}
\label{e:onemom}
\lim_{M\to +\infty} \varlimsup_{t\to +\infty}
\frac{1}{t}  \log \bbP (\mu_t(1/(a+b))>M)=-\infty.
\end{equation}
In particular the sequence $({\bf P}_t)_{t>0}$ is exponentially tight
with speed $t$, namely
\begin{equation*}
\inf_{\mc K \subset \subset \mc P(]0,+\infty]^2)} \varlimsup_{t\to+\infty}
\frac{1}{t}  \log {\bf P}_t (\mc K)=-\infty.
\end{equation*}
\end{lemma}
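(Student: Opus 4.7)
The plan is to reduce \eqref{e:onemom} to a tail bound on $N_t/t$, obtain this bound by a Chernoff estimate, and upgrade it to exponential tightness using the pathwise renewal structure of $\mu_t$.

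The first step exploits $A_s + B_s = \tau_{N_s}$, which is constant equal to $\tau_i$ on each complete renewal interval $[S_{i-1}, S_i)$. This yields
\[
\mu_t\!\left(\tfrac{1}{a+b}\right) = \frac{1}{t} \int_0^t \frac{ds}{\tau_{N_s}} = \frac{N_t-1}{t} + \frac{A_t/\tau_{N_t}}{t} \le \frac{N_t}{t},
\]
so it suffices to show $\lim_{M\to\infty}\varlimsup_t \tfrac 1t \log \bbP(N_t > Mt) = -\infty$. For this I would use the duality $\{N_t > k\} = \{S_k \le t\}$ together with Markov's inequality applied to $e^{-\lambda S_k}$: for any $\lambda > 0$,
\[
\bbP(N_t > Mt) = \bbP(S_{\lceil Mt\rceil} \le t) \le e^{\lambda t}\,\psi(e^{-\lambda \tau})^{\lceil Mt \rceil}.
\]
Since $\psi$ is supported in $]0,+\infty[$, $\psi(e^{-\lambda\tau}) < 1$ for every $\lambda > 0$. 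Fixing $\lambda = 1$ and taking $\tfrac 1t \log$ and then $t \to \infty$ produces the upper bound $1 + M\log\psi(e^{-\tau})$, which tends to $-\infty$ as $M \to \infty$.

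For the exponential tightness I would introduce the closed subsets
\[
\mc K_M := \bigcap_{\eta \in \bbQ_+} \Big\{\mu\in\mc P(]0,+\infty]^2) \,:\, \mu(\{a<\eta\}) + \mu(\{b<\eta\}) \le 2\eta M\Big\}.
\]
Each $\mc K_M$ is closed because $\{a<\eta\}$ is open in $]0,+\infty]^2$, so $\mu \mapsto \mu(\{a<\eta\})$ is lower semicontinuous; it is tight because for $\mu \in \mc K_M$ one has $\mu([\eta,+\infty]^2) \ge 1 - 2\eta M$, and $[\eta,+\infty]^2$ is compact in $]0,+\infty]^2$. Hence $\mc K_M$ is compact. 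A direct pathwise count gives $|\{s\in[0,t) : A_s < \eta\}| \le N_t \eta$, and similarly for $B_s$, so on the event $\{N_t \le Mt\}$ the empirical measure lies in $\mc K_M$. Consequently $\bbP(\mu_t \notin \mc K_M) \le \bbP(N_t > Mt)$, and exponential tightness follows from the bound of the previous paragraph.

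The main obstacle is worth flagging at the last step: the first-moment bound $\mu(1/(a+b)) \le M$ does \emph{not} by itself control mass near the coordinate axes $\{a=0\}$ or $\{b=0\}$ (consider $\delta_{(0,+\infty)}$, for which $1/(a+b) = 0$), so compactness in $\mc P(]0,+\infty]^2)$ really needs the renewal structure of $\mu_t$ to tie the two marginals together through the single counter $N_t$.
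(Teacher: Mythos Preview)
Your derivation of \eqref{e:onemom} is essentially the paper's: bound $\mu_t(1/(a+b)) \le N_t/t$ via the identity $A_s+B_s = \tau_{N_s}$, then apply the exponential Markov inequality to $\bbP(S_{\lfloor Mt\rfloor} \le t)$ with parameter $\lambda=1$.

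For the exponential tightness the paper takes a different route: it simply asserts that the sublevel set $\{\mu \in \mc P(]0,+\infty]^2) : \mu(1/(a+b)) \le M\}$ is tight, and uses these as the required compacts. Your closing remark is on point and in fact flags a real gap in the paper's argument: this sublevel set is \emph{not} tight in $\mc P(]0,+\infty]^2)$, since for instance $\mu_n=\delta_{(1/n,\,1)}$ satisfies $\mu_n(1/(a+b)) = n/(n+1) < 1$ for every $n$, yet the sequence escapes toward the excluded boundary $\{a=0\}$. Your construction of the compacts $\mc K_M$ through uniform control of $\mu(\{a<\eta\})$ and $\mu(\{b<\eta\})$, together with the pathwise count $|\{s\in[0,t) : A_s < \eta\}| \le N_t\,\eta$ (and the symmetric bound for $B_s$), is a correct repair. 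An alternative patch would be to intersect the sublevel set with $\Delta$, where the bound on $\mu(1/(a+b))$ \emph{does} give compactness (as the paper uses elsewhere), but your direct argument is cleaner here.
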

\begin{proof} We recall that $\{S_n\leq t\}=\{N_t> n\}$.
Note that if $\lfloor Mt\rfloor \ge 1$
\begin{equation*}
\begin{split}
\{\mu_t(1/\tau) > M\}
= \left\{\frac{N_t-1}t + \frac{t-S_{N_t}}{t\,\tau_{N_t}} > M \right\}
\subset \big\{N_t > \lfloor Mt\rfloor\big\}=
\left\{S_{\lfloor Mt\rfloor}  \leq t \right\}.
\end{split}
\end{equation*}
Therefore by the Markov inequality
\begin{equation*}
\begin{split}
\bbP(\mu_t(1/\tau) \ge M) \le
 \bbP\left(S_{\lfloor Mt\rfloor} \le t\right)
\le e^t \, \E\left( e^{-S_{\lfloor Mt\rfloor}}\right)
=  e^{t + \lfloor Mt\rfloor \log c}
\end{split}
\end{equation*}
where $c:= \E\left( e^{-\tau_1} \right)<1$, and
inequality \eqref{e:onemom} follows easily. Since for any $M>0$ the set $\{\mu \in
\mc P(]0,+\infty]^2)\,:\: \mu(1/(a+b)) \le M \}$ is tight in $]0,+\infty]^2$,
exponential tightness follows.
\end{proof}

\subsection{The empirical measure is asymptotically close to $\Delta$}
We give here the main argument to show that the rate functional at speed $t$ of $\mu_t$ must be equal to $+\infty$ outside $\Delta$. It will 
follow from the Lemma~\ref{closed}, and the following Lemma stating that $\mu_t$ belongs to an arbitrary neighborhood of $\Delta$ in $
\cP(]0,+\infty]^2)$
for $t$ large enough.
\begin{lemma}\label{nut}
For $f\in C_b(]0,+\infty]^2)$, set
\begin{equation}
\label{nuteq}
\nu_t(f) :=\frac1t \sum_{i=1}^{N_t-1}  {\tau_i} \, \int_0^1 f(u\tau_i,(1-u)\tau_i)\, du\,
+\frac{t-S_{N_t-1}}{t}\, f\left(+\infty,+\infty \right)
\end{equation}
then $\nu_t\in\Delta$. For all $f\in C_b(]0,+\infty]^2)$ and $\delta>0$, there exists $t$ large enough such that the event $\{|\mu_t(f)-\nu_t(f)|>
\delta\}$ is empty.
\end{lemma}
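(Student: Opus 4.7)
The proof splits into two pieces: identifying $\nu_t$ as an element of $\Delta$, and controlling the pathwise error $\mu_t(f)-\nu_t(f)$.

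For the first part, I would simply read off the decomposition \eqref{e:muomega0}--\eqref{e:muomega} from the definition \eqref{nuteq}. Set
\begin{equation*}
\alpha_t := \frac{S_{N_t-1}}{t} \in [0,1], \qquad
\pi_t := \frac{1}{S_{N_t-1}} \sum_{i=1}^{N_t-1} \tau_i\, \delta_{\tau_i} \in \mc P(]0,+\infty[),
\end{equation*}
(with any arbitrary choice when $N_t=1$, since then $\alpha_t=0$). Since $\pi_t(1/\tau) = (N_t-1)/S_{N_t-1} < +\infty$, the first summand in $\nu_t$ is precisely $\alpha_t \mu_0$ with $\mu_0 \in \Delta_0$ built from $\pi_t$, while the second summand is $(1-\alpha_t)\delta_{(+\infty,+\infty)}$. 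Hence $\nu_t \in \Delta$.

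For the second part, I would compute $\mu_t(f)$ by slicing the integral $\int_0^t f(A_s,B_s)\,ds$ along renewals. For $s \in [S_{i-1},S_i[$ one has $(A_s,B_s)=(s-S_{i-1},S_i-s)$, so the substitution $u=(s-S_{i-1})/\tau_i$ gives $\int_{S_{i-1}}^{S_i} f(A_s,B_s)\,ds = \tau_i \int_0^1 f(u\tau_i,(1-u)\tau_i)\,du$. Summing $i=1,\ldots,N_t-1$ reproduces the bulk part of $\nu_t(f)$, leaving only the contribution from the last, incomplete interval. Comparing with the boundary term $(t-S_{N_t-1})f(+\infty,+\infty)/t$ yields
\begin{equation*}
\nu_t(f) - \mu_t(f) \;=\; \frac{1}{t}\int_{S_{N_t-1}}^{t} \bigl[f(+\infty,+\infty) - f(s-S_{N_t-1},\, S_{N_t}-s)\bigr]\,ds.
\end{equation*}

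The final step is a deterministic (pathwise) bound on this residue, which will be where the heart of the argument lies. Given $\delta>0$, the continuity of $f$ at the point $(+\infty,+\infty)\in\,]0,+\infty]^2$ (for the metric making this space isometric to $[0,+\infty[^2$) furnishes $M>0$ such that $|f(a,b)-f(+\infty,+\infty)|\le\delta/2$ whenever $a\wedge b\ge M$. Writing $L:=t-S_{N_t-1}$ and $\tau:=\tau_{N_t}$ (so $0\le L\le \tau$), the integrand above, reparameterised by $u=s-S_{N_t-1}\in[0,L]$, is evaluated at $(u,\tau-u)$; the set where $u<M$ or $\tau-u<M$ has Lebesgue measure at most $2M$ independently of the realisation, and on its complement the integrand is bounded by $\delta/2$. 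Hence
\begin{equation*}
|\nu_t(f)-\mu_t(f)| \;\le\; \frac{4M\|f\|_\infty}{t} + \frac{\delta}{2},
\end{equation*}
which is at most $\delta$ as soon as $t \ge 8M\|f\|_\infty/\delta$; this bound depends only on $f$ and $\delta$, not on the sample path, so the event in the statement is empty for all such $t$. The only subtlety worth flagging is that $L$ itself may be comparable with $t$ (when $\tau_{N_t}$ is huge), but this is exactly the regime in which most of the interval $[0,L]$ lies in the region where $f$ is close to $f(+\infty,+\infty)$, so the estimate still closes uniformly in the realisation.
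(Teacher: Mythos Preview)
Your proof is correct and follows essentially the same approach as the paper: identical identification of $\alpha_t$ and $\pi_t$ to place $\nu_t$ in $\Delta$, the same slicing of $\int_0^t f(A_s,B_s)\,ds$ along renewal intervals to isolate the error as the contribution of the last incomplete cycle, and the same use of continuity of $f$ at $(+\infty,+\infty)$ to close the estimate uniformly in the sample path. The only difference is cosmetic: the paper packages the final bound through the auxiliary function $\zeta(s):=\int_0^1 \sup_{\tau\ge s}|f(us,\tau-us)-f(+\infty,+\infty)|\,du$ and argues that $x\zeta(tx)$ cannot exceed $\delta$ for large $t$, whereas you split the interval $[0,L]$ directly into the region $\{u<M\}\cup\{\tau-u<M\}$ of measure $\le 2M$ and its complement; your version is arguably the more transparent implementation of the same idea.
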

\begin{proof}
It is easy to see that $\nu_t\in\Delta$, and that it is given as in \eqref{e:muomega0}-\eqref{e:muomega} with
\begin{equation*}
\alpha=\frac{S_{N_t-1}}t, \qquad \pi=\frac1{S_{N_t-1}}\sum_{i=1}^{N_t-1}  {\tau_i}\, \delta_{\tau_i}.
\end{equation*}

Recall the definition \eqref{overf}. Then for all $f\in C_b(]0,+\infty]^2)$
\begin{equation}
\label{1}
\begin{split}
\mu_t(f)  =\frac1t \sum_{i=1}^{N_t-1}  {\tau_i} \int_0^1 f(u\tau_i,(1-u)\tau_i)\, du
+ \frac{\tau_{N_t}}{t}\int_0^{\frac{t-S_{N_t-1}}{\tau_{N_t}}} f(u\tau_{N_t},(1-u)\tau_{N_t})\, du
\\ = \frac1t \, \sum_{i=1}^{N_t-1}  {\tau_i} \, \overline f(1,\tau_i)\,
+\frac{\tau_{N_t}}{t}\, \overline f\left(\frac{t-S_{N_t-1}}{\tau_{N_t}},\tau_{N_t} \right).
\end{split}
\end{equation}
We can rewrite
\begin{equation*}
\frac{\tau_{N_t}}{t}\, \overline f\left(\frac{t-S_{N_t-1}}{\tau_{N_t}},\tau_{N_t} \right)=\frac{t-S_{N_t-1}}{t}\int_0^1 f\left(u(t-S_{N_t-1}),\tau_{N_t}-u(t-
S_{N_t-1})\right)\, du.
\end{equation*}
Then
\begin{equation*}
\begin{split}
 & |\mu_t(f)-\nu_t(f)|  
 \\ 
 & =  \frac{t-S_{N_t-1}}{t}
\left|\int_0^1 \left[f\left(u(t-S_{N_t-1}),\tau_{N_t}-u(t-S_{N_t-1})\right)-f(+\infty,+\infty)\right] du\right|.
\end{split}
\end{equation*}
Since $f(a,b)\to f(+\infty,+\infty)$ as $(a,b)\to (+\infty,+\infty)$ and $f$ is bounded, then the function
\begin{equation*}
\zeta(s):=\int_0^1 \sup_{\tau\geq s}\left|f\left(us,\tau-us)\right)-f(+\infty,+\infty)\right|\, du
\end{equation*}
is bounded, monotone non-increasing and tends to 0 as $s\to+\infty$. Then
\begin{equation*}
\begin{split}
& \{|\mu_t(f)-\nu_t(f)|>\delta\}\subset\left\{\frac{t-S_{N_t-1}}{t}\,\zeta(t-S_{N_t-1})>\delta\right\}=\left\{x_t\,\zeta(tx_t)>\delta\right\}
\end{split}
 \end{equation*}
 where $x_t=\frac{t-S_{N_t-1}}{t}\in[0,1]$. If $x\in[0,1]$ satisfies $x\,\zeta(tx)>\delta$, then $\delta<\zeta(tx)$ and $x\leq \zeta^{-1}(\delta)/t$, so 
that $\delta<C_\delta/t$ and this is impossible as soon as $t\geq C_\delta/\delta$. Therefore, for $t$ large enough the event $\{|\mu_t(f)-
\nu_t(f)|>\delta\}$ is empty.
\end{proof}

\subsection{Free energy}
Recall the definition of $\Gamma$ and \eqref{calpha}-\eqref{e:fsmall}.

\begin{prop}
\label{mgf}
For all $f\in\Gamma$
\begin{equation}
\label{freeener}
\sup_{t>0} \, \E\, e^{t\mu_t(f)} =
\sup_{t>0} \, \E\,\exp\left(\int_0^tf( A_s,B_s)\, ds\right)\leq \frac{D_f}{1-C_f} <+\infty.
\end{equation}
\end{prop}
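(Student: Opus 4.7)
The plan is to evaluate the integral $\int_0^t f(A_s,B_s)\,ds$ explicitly by splitting it along the renewal intervals and then to bound the expectation via an independence/geometric-series argument.

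First I would observe that on the interval $[S_{i-1},S_i[$ one has $A_s=s-S_{i-1}$ and $B_s=S_i-s$. Changing variable $s=S_{i-1}+u\tau_i$ on each complete interval $i=1,\dots,N_t-1$, and $s=S_{N_t-1}+u\tau_{N_t}$ on the last partial interval $[S_{N_t-1},t[$, this gives the pathwise identity
\begin{equation*}
\int_0^t f(A_s,B_s)\,ds
= \sum_{i=1}^{N_t-1}\tau_i\, \overline f(1,\tau_i)
+ \tau_{N_t}\, \overline f\!\left(\frac{t-S_{N_t-1}}{\tau_{N_t}},\tau_{N_t}\right),
\end{equation*}
which is exactly the same decomposition already appearing in \eqref{1}.

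Next I would expand the expectation by decomposing on the value of $N_t$, using $\{N_t=n\}=\{S_{n-1}\le t<S_n\}$ for $n\ge 1$. Conditionally on $\cF_{n-1}=\sigma(\tau_1,\dots,\tau_{n-1})$, the variable $\tau_n$ is independent of $S_{n-1}$ with law $\psi$, so the contribution of the level $n$ term reads
\begin{equation*}
\bbE\!\left[\mathbf 1_{\{S_{n-1}\le t\}}\prod_{i=1}^{n-1}e^{\tau_i\overline f(1,\tau_i)}\cdot \int_{]t-S_{n-1},+\infty[}\!\!\psi(d\tau)\, e^{\tau\overline f((t-S_{n-1})/\tau,\tau)}\right].
\end{equation*}
By the definition \eqref{e:fsmall}, the inner $\psi$-integral is bounded by $D_f$ uniformly in $S_{n-1}$ (and also for $s\to 0$, where $\overline f(0,\tau)=0$ so the integral reduces to $1\le D_f$ up to obvious adjustment), and dropping the indicator, the i.i.d.\ structure of $(\tau_i)$ together with \eqref{calpha} gives
\begin{equation*}
\bbE\!\left[\prod_{i=1}^{n-1}e^{\tau_i\overline f(1,\tau_i)}\right]=C_f^{\,n-1}.
\end{equation*}

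Summing over $n\ge 1$ yields the geometric bound
\begin{equation*}
\bbE\, e^{\int_0^t f(A_s,B_s)\,ds}\le \sum_{n\ge 1}C_f^{\,n-1}D_f=\frac{D_f}{1-C_f},
\end{equation*}
which is finite since $C_f<1$ and independent of $t$, giving the wanted supremum bound. This argument is essentially routine once the pathwise decomposition is in hand; the only mild care is in justifying the interchange of sum and expectation (monotone convergence, since all terms are nonnegative) and in treating the edge case $n=1$, where $S_0=0$ and the product over $i=1,\dots,0$ is interpreted as $1$. The choice of the constants $C_f$ and $D_f$ in the definition of $\Gamma$ is precisely designed so that these two factors emerge from the complete intervals and the terminal overshoot, respectively.
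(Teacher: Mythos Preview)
Your proof is correct and follows essentially the same route as the paper. The paper makes the independence argument slightly more explicit by introducing the tilted measure $\psi_f(d\tau)=C_f^{-1}e^{\tau\overline f(1,\tau)}\psi(d\tau)$ and writing the contribution of $\{N_t=n+1\}$ as $C_f^{\,n}\int_{[0,t]}\zeta_n(ds)\int_{]t-s,+\infty[}\psi(d\tau)e^{\tau\overline f((t-s)/\tau,\tau)}$, but after bounding the inner integral by $D_f$ and summing the geometric series, the two arguments are identical.
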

\begin{proof} Since $C_f\in\,]0,+\infty[$, we can introduce the
probability measure
\begin{equation*}
\psi_f(d\tau):=\frac1{C_f}\, \psi(d\tau)\, e^{\tau\overline f(1,\tau)} 
\end{equation*}
and denote by $\zeta_n$ the law of
$S_n$ if $(\tau_i)_{i\in\N^*}$ is i.i.d.\ with common law ${\psi_f}$.
Recalling \eqref{overf} and \eqref{1}
\begin{equation*}
\begin{split}
& \E\exp\left(\int_0^tf( A_s,B_s)\, ds\right)=
\E\left(\un{(N_t=1)}\exp\left( {\tau_1}\, \overline f(t/{\tau_1},{\tau_1})\right) \right)
\\ & +
\sum_{n=1}^{\infty}
\E\left(\un{(N_t=n+1)}\exp\left(\sum_{i=1}^n{\tau_i}\, \overline f(1,\tau_i)
+ \tau_{n+1} \, \overline f\left(\frac{t-S_{n}}{\tau_{n+1}},\tau_{n+1}\right)\right)\right)
\\ & = \int_{]t,+\infty[} \psi(d\tau)\, e^{\tau\,\overline f(t/\tau,\tau)}
+\sum_{n=1}^{\infty}\int_{[0,t]} C_f^n\, \zeta_n(ds) \int_{]t-s,+\infty[} \psi(d\tau) \, e^{\tau\, \overline f((t-s)/\tau,\tau)}
\\ & \leq D_f \sum_{n=0}^{\infty} C_f^n = \frac{D_f}{1-C_f}.
\end{split}
\end{equation*}
\end{proof}

\begin{proof}[Proof of Theorem~\ref{t:ld1}, upper bound]
For $M>0$, $g\in C_b(]0,+\infty]^2)$ and $\delta>0$, let 
\begin{equation*}
\Delta_{M,g,\delta}=\big\{ \mu \in\cP(]0,+\infty]^2)\,: \: \exists\nu\in \Delta, \, |\mu(g)-\nu(g)|\leq\delta, \, \mu(1/(a+b))\le M\big\}
\end{equation*}
and
\begin{equation*}
  R_{M,g,\delta}:= -\varlimsup_{t \to +\infty} \frac{1}{t} \log {\bf P}_t(\Delta_{M,g,\delta}^c).
\end{equation*}
For $\mc A$ measurable subset of $\mc P(]0,+\infty]^2)$ and
for $f \in \Gamma$, by \eqref{freeener},
\begin{equation*}
\begin{split}
    \frac{1}{t} \log {\bf P}_t(\mc A) &
=      \frac{1}{t} \log
        \E\left( e^{t \mu_t(f)}
      e^{-t \mu_t(f)} \un {\mc A}(\mu_t) \right)
\le \frac{1}{t} \log\left[   e^{ -t\inf_{\mu \in \mc A}\mu(f)} \,
\E\left( e^{t \mu_t(f)} \right) \right]
\\ & \leq -\inf_{\mu \in \mc A} \mu(f) +\frac{1}{t} \log\frac{D_f}{1-C_f}
    \end{split}
\end{equation*}
and therefore
  \begin{equation}\label{e:ldmeas}
      \varlimsup_{t \to + \infty} \frac{1}{t} \log \mb P_t(\mc A)
\le  -\inf_{\mu \in \mc A} \mu(f).
\end{equation}
Let now $\mc O$ be
an open subset of $\mc P(]0,+\infty]^2)$. Then applying
\eqref{e:ldmeas} for $\mc A=\mc O \cap \Delta_{M,g,\delta}$
  \begin{equation*}
    \begin{split}
&    \varlimsup_{t\to+\infty} \frac{1}{t} \log {\bf P}_t(\mc O) \le
      \varlimsup_{t\to+\infty} \frac{1}{t} \log\big[2 \max({\bf P}_t
      (\mc O \cap \Delta_{M,g,\delta}),{\bf P}_t(\Delta_{M,g,\delta}^c) )\big]
\\ & \qquad
\le \max\left(-\inf_{\mu \in \mc O \cap \Delta_{M,g,\delta}} \mu(f), -R_{M,g,\delta}\right)=
-\inf_{\mu \in \mc O \cap \Delta_{M,g,\delta}} \mu(f) \wedge R_{M,g,\delta}
    \end{split}
\end{equation*}
which can be restated as
\begin{equation}\label{e:ldopen}
  \varlimsup_{t\to +\infty}  \frac{1}{t} \log {\bf P}_t(\mc O) \le
- \inf_{\mu \in \mc O} \I_{f,M,g,\delta}(\mu)
\end{equation}
for any open set $\mc O$, $f \in \Gamma$ and $M>0$,
where the functional $\I_{f,M,g,\delta}$ is defined as
\begin{equation*}
  \I_{f,M,g,\delta}(\mu):=
  \begin{cases}
    \mu(f) \wedge R_{M,g,\delta} & \text{if $\mu \in \Delta_{M,g,\delta}$}
\\ +\infty & \text{otherwise}.
  \end{cases}
\end{equation*}
Since $f$ is lower semicontinuous and $\Delta_{M,g,\delta}$ is compact
by Lemma~\ref{l:3.2}, then $\I_{f,M,g,\delta}$ is lower
semicontinuous. By minimizing \eqref{e:ldopen} over $\{f,M,g,\delta\}$ we obtain
\begin{equation*}
\varlimsup_{t\to +\infty}  \frac{1}{t} \log {\bf P}_t(\mc O) \le
- \sup_{f,M,g,\delta}\inf_{\mu \in \mc O} \I_{f,M,g,\delta}(\mu)
\end{equation*}
and by applying the minimax lemma
\cite[Appendix 2.3, Lemma 3.3]{KL}, we get that for all compact set $\mc K$
\begin{equation*}
\varlimsup_{t\to +\infty}  \frac{1}{t} \log {\bf P}_t(\mc K) \le
- \inf_{\mu \in \mc K} \sup_{f,M,g,\delta} \I_{f,M,g,\delta}(\mu)
\end{equation*}
i.e.\ $({\bf P}_t)_{t\geq 0}$ satisfies
a large deviations upper bound on compact sets with speed $t$ and rate
$\tilde{\I}(\mu)$ for $\mu\in\cP(]0,+\infty]^2)$
\begin{equation*}
\tilde{\I}(\mu):=  \sup \{\I_{f,M,g,\delta}(\mu)\,: \: f \in \Gamma, \,M>0, \,
g\in C_b(]0,+\infty]^2),\, \delta>0\}.
\end{equation*}
By Lemma~\ref{closed} we have $\cap_{g,\delta}\Delta_{M,g,\delta}\subset
\Delta$, so that $\tilde{\I}(\mu)=+\infty$
if $\mu\notin\Delta$. By Lemma~\ref{l:3.2} and Lemma~\ref{nut}, 
\begin{equation*}
\lim_{M\to +\infty} R_{M,g,\delta}=+\infty, \qquad \forall \, g\in C_b(]0,+\infty]^2),\, \delta>0.
\end{equation*}
Therefore for all $\mu\in\cP(]0,+\infty]^2)$
\begin{equation*}
\tilde{\I}(\mu)\ge  \sup \{\I_{f}(\mu),\,f \in \Gamma\}
\end{equation*}
where
\begin{equation*}
  \I_{f}(\mu):=
  \begin{cases}
    \mu(f) & \text{if $\mu \in \Delta$}
\\ \\
+\infty & \text{otherwise}
  \end{cases}
\end{equation*}
Thus $\tilde{\I}(\mu) \ge \I(\mu)$ by Lemma~\ref{l:3.4}. Therefore
$({\bf P}_t)_{t\geq 0}$ satisfies a large deviations upper bound with rate $\I$ on
compact sets. By Lemma~\ref{l:3.2} and \cite[Lemma 1.2.18]{demzei},
$({\bf P}_t)_{t\geq 0}$ satisfies the full large deviations upper bound on closed sets.
\end{proof}

\section{Lower bound}
In this section we prove the lower bound \eqref{lower} in Theorem~\ref{t:ld1}.

\subsection{Law of large numbers for $\mu_t$}
For any $\pi\in\cP(]0,+\infty[)$ with $\pi(1/\tau)\in\, ]0,+\infty[$ we recall
that
\begin{equation*}
\tilde \pi(d\tau):= \frac 1{\pi(1/\tau)}\, \frac1\tau\, \pi(d\tau).
\end{equation*}
and we denote
by $\bbP_{\tilde \pi}$ the law of an i.i.d.\ sequence $(\tau_i)_{i\geq 1}$ with
marginal distribution $\tilde \pi$, 
i.e.\
\begin{equation}\label{bbqphi}
\bbP_{\tilde \pi}:=\otimes_{i\in\N^*}\tilde \pi(d\tau_i).
\end{equation}

\begin{prop}\label{limit}
Let $\pi\in\cP(]0,+\infty[)$ with $\pi(1/\tau)\in\,]0,+\infty[$.
Under $\bbP_{\tilde \pi}$,  a.s. 
\begin{equation*}
\mu_t\rightharpoonup \int_{[0,1]\times]0,+\infty[} \delta_{(u\tau,(1-u)\tau)}\, du \otimes \pi(d\tau) \quad\text{on $]0,+\infty]^2$}, \qquad t\to+\infty.
\end{equation*}
\end{prop}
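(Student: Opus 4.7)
The plan is to apply the decomposition \eqref{1} established in the proof of Lemma~\ref{nut} and reduce the convergence to the classical strong law of large numbers together with the elementary renewal theorem. I would first observe that the standing hypothesis $\pi(1/\tau)\in\,]0,+\infty[$ combined with the relation \eqref{e:tildepi/tau} yields $\tilde\pi(\tau)=1/\pi(1/\tau)\in\,]0,+\infty[$, so under $\bbP_{\tilde\pi}$ the sequence $(\tau_i)$ is i.i.d.\ with finite mean; the SLLN then gives $S_n/n\to\tilde\pi(\tau)$ a.s., and the elementary renewal theorem yields $N_t/t\to 1/\tilde\pi(\tau)=\pi(1/\tau)$ a.s.

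Fix $f\in C_\mathrm{b}(]0,+\infty]^2)$. Since $|\overline f(1,\tau)|\leq\|f\|_\infty$, the variables $\tau_i\,\overline f(1,\tau_i)$ are $\tilde\pi$-integrable; I would apply the SLLN to them and multiply by $(N_t-1)/t$ to conclude that the first summand on the right-hand side of \eqref{1} converges a.s.\ to $\pi(1/\tau)\,\tilde\pi(\tau\,\overline f(1,\tau))$. Using $\tilde\pi(d\tau)=[\pi(1/\tau)\tau]^{-1}\pi(d\tau)$, this limit equals $\int_{]0,+\infty[}\overline f(1,\tau)\,\pi(d\tau)$, which is exactly the mass that the target measure assigns to $f$.

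The main obstacle is to show that the boundary summand in \eqref{1} vanishes in the limit. Since $|\overline f|\leq\|f\|_\infty$, it suffices to prove $\tau_{N_t}/t\to 0$ a.s. From $S_n/n\to\tilde\pi(\tau)<+\infty$ one obtains $\tau_n/n\to 0$ a.s.; a standard cutoff argument then upgrades this to $\max_{i\leq n}\tau_i/n\to 0$ a.s. Coupled with $N_t/t\to\pi(1/\tau)$, this yields $\tau_{N_t}/t\leq(\max_{i\leq N_t}\tau_i/N_t)(N_t/t)\to 0$ a.s. The finiteness of $\tilde\pi(\tau)$ is crucial here: in a heavy-tail regime it would break down, and indeed the singular term $(1-\alpha)\xi$ in $\I$ precisely reflects this lack of integrability.

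Combining the two preceding paragraphs gives $\mu_t(f)\to\mu^\star(f)$ a.s.\ for each \emph{fixed} $f$, where $\mu^\star$ denotes the measure on the right-hand side of the statement. To upgrade this pointwise statement to the a.s.\ narrow convergence $\mu_t\rightharpoonup\mu^\star$ on $]0,+\infty]^2$, I would fix a countable convergence-determining subfamily $\mathcal F\subset C_\mathrm{b}(]0,+\infty]^2)$ (which exists since $]0,+\infty]^2$ is Polish) and intersect the corresponding null sets.
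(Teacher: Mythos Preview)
Your argument is correct and follows the same decomposition as the paper, with identical treatment of the main term via the SLLN and the renewal theorem. The only difference is in the boundary term: you bound $|\overline f|\le\|f\|_\infty$ and then prove $\tau_{N_t}/t\to 0$ via $\max_{i\le n}\tau_i/n\to 0$, whereas the paper uses the sharper estimate $|\overline f(r,\tau)|\le r\,\|f\|_\infty$ (immediate from the definition $\overline f(r,\tau)=\int_0^r f(u\tau,(1-u)\tau)\,du$), so that the whole boundary summand is bounded by $\tfrac{t-S_{N_t-1}}{t}\|f\|_\infty$, which tends to $0$ simply because $S_{N_t-1}/t\to 1$. This avoids the extra cutoff argument for $\max_{i\le n}\tau_i/n$. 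Your final remark about passing from ``for each $f$, a.s.'' to ``a.s., for all $f$'' via a countable convergence-determining class is a point the paper leaves implicit, and it is good that you made it explicit.
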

\begin{proof}
For all $f\in C(]0,+\infty]^2)$ we recall the notation
\eqref{overf}
\begin{equation*}
\overline f(r,\tau):=\int_0^r f(u\tau,(1-u)\tau)\, du, \qquad
r\in[0,1], \, \tau>0,
\end{equation*}
and, by \eqref{1}
\begin{equation*}
\mu_t(f) =\frac1t \, \sum_{i=1}^{N_t-1}  {\tau_i} \, \overline f(1,\tau_i)\,
+\frac{\tau_{N_t}}{t}\, \overline f\left(\frac{t-S_{N_t-1}}{\tau_{N_t}},\tau_{N_t} \right).
\end{equation*}
By the strong law of large numbers a.s.
\begin{equation*}
\begin{split}
\lim_{n\to+\infty} \frac1n\sum_{i=1}^{n}  {\tau_i}\, \overline f(1,\tau_i)
=\tilde\pi(\tau\, \overline f(1,\tau)) =\frac1{\pi(1/\tau)}\, \pi(\overline f(1,\tau)).
\end{split}
\end{equation*}
By the renewal Theorem, a.s.
\begin{equation*}
\lim_{t\to+\infty} \frac{N_t-1}t = \frac1{\bbE_{\tilde \pi}(\tau_1)}=\frac{\pi(1/\tau)}{\int \tau\,\frac1\tau\, \pi(d\tau)} =\pi(1/\tau)\in\, ]0,+\infty[.
\end{equation*}
Therefore a.s.
\begin{equation*}
\lim_{t\to+\infty} \frac{{N_t-1}}t \, \frac1{{N_t-1}}\sum_{i=1}^{N_t-1}   {\tau_i} \, \overline f(1,\tau_i)=
\, \pi\left(\overline f(1,\tau)\right).
\end{equation*}
On the other hand, by the law of large numbers a.s.
\begin{equation*}
\lim_{n\to+\infty} \frac{S_n}n=\tilde\pi(\tau)=\frac1{\pi(1/\tau)},
\end{equation*}
so that
a.s.
\begin{equation*}
\lim_{t\to+\infty} \frac{S_{N_t-1}}t=\lim_{t\to+\infty} \frac{S_{N_t-1}}{N_t-1} \, \frac{{N_t-1}}t=1,
\qquad \lim_{t\to+\infty} \frac{t-S_{N_t-1}}t=0.
\end{equation*}
It follows that a.s.
\begin{equation*}
\lim_{t\to+\infty} \left| \frac{\tau_{N_t}}{t}\, \overline f\left(\frac{t-S_{N_t-1}}{\tau_{N_t}},\tau_{N_t} \right) \right|
\leq \lim_{t\to+\infty} \frac{t-S_{N_t-1}}t \, \|f\|_\infty =0.
\end{equation*}
\end{proof}

\subsection{Proof of the lower bound}

For the proof of the lower bound, it is well known that it is enough to show the following
\begin{prop}\label{oraet}
For every $\mu\in\Delta$ there exists a family ${\bf Q}_t$ of
probability measures on $\mc P(]0,+\infty]^2)$ such that
${\bf Q}_t\rightharpoonup\delta_\mu$
and
\begin{equation*}
\varlimsup_{t\to+\infty} \frac1t  \H({\bf Q}_t \,| \, {\bf P}_t)\leq \I(\mu).
\end{equation*}
\end{prop}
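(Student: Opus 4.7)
My plan is to reduce the problem, via the $\Gamma$-density statement in Proposition~\ref{p:igood}-(3), to the case $\mu \in \Delta_0$, and for that case to take $\bbQ_t$ to be the product measure $\bbP_{\tilde\pi}$ defined in \eqref{bbqphi}. First I dispose of the trivial case $\I(\mu)=+\infty$. For $\I(\mu)<+\infty$, Proposition~\ref{p:igood}-(3) (applied with $\psi_n\equiv\psi$, so $\I_n\equiv\I$) produces a sequence $\mu_n\in\Delta_0$ with $\mu_n\rightharpoonup\mu$ and $\I_0(\mu_n)\to\I(\mu)$. Assuming the proposition has been established for each $\mu_n\in\Delta_0$, producing families $({\bf Q}_t^{(n)})_{t>0}$ with ${\bf Q}_t^{(n)}\rightharpoonup\delta_{\mu_n}$ as $t\to+\infty$ and $\varlimsup_{t\to+\infty} t^{-1}\H({\bf Q}_t^{(n)}\mid{\bf P}_t)\le \I_0(\mu_n)$, a standard diagonal extraction ${\bf Q}_t:={\bf Q}_t^{(n(t))}$ with $n(t)\uparrow+\infty$ slowly enough transfers both properties to $\mu$. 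This packaging hides the delicate $(1-\alpha)\xi$ contribution to $\I(\mu)$ inside the construction of Proposition~\ref{p:igood}-(3), which already uses conditional measures $\psi(\cdot\mid[M,L[)$ concentrating near infinity at the correct entropy cost.

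For $\mu\in\Delta_0$, written as in \eqref{e:muomega0} with $\pi\in\cP(]0,+\infty[)$, the assumption $\I_0(\mu)<+\infty$ forces $\pi(1/\tau)\in(0,+\infty)$ and $\H(\tilde\pi\,|\,\psi)<+\infty$. I would set $\bbQ_t:=\bbP_{\tilde\pi}$ and let ${\bf Q}_t$ denote the push-forward of $\bbQ_t$ under the measurable map $(\tau_i)_{i\ge 1}\mapsto\mu_t$. By Proposition~\ref{limit}, $\mu_t\rightharpoonup\mu$ holds $\bbP_{\tilde\pi}$-almost surely, hence ${\bf Q}_t\rightharpoonup\delta_\mu$ at once.

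For the entropy bound I would combine the data-processing inequality with Wald's identity. Since $\mu_t$ is measurable with respect to $\cF_t:=\sigma(N_t,\tau_1,\ldots,\tau_{N_t})$ and, on $\{N_t=n\}$, the Radon--Nikodym derivative of $\bbP_{\tilde\pi}|_{\cF_t}$ with respect to $\bbP|_{\cF_t}$ equals $\prod_{i=1}^{n}(d\tilde\pi/d\psi)(\tau_i)$,
\begin{equation*}
\H({\bf Q}_t\mid{\bf P}_t) \;\le\; \H\big(\bbQ_t|_{\cF_t}\,\big|\,\bbP|_{\cF_t}\big)
\;=\; \E_{\bbQ_t}\!\Big[\,\sum_{i=1}^{N_t}\log\tfrac{d\tilde\pi}{d\psi}(\tau_i)\Big]
\;=\; \E_{\bbQ_t}[N_t]\,\H(\tilde\pi\,|\,\psi),
\end{equation*}
the last equality by Wald's identity: $N_t$ is a $(\tau_i)$-stopping time with $\E_{\bbQ_t}[N_t]<+\infty$, and the $\log(d\tilde\pi/d\psi)(\tau_i)$ are $\tilde\pi$-i.i.d.\ and integrable, since $\H(\tilde\pi\,|\,\psi)<+\infty$ and $x\log x\ge-1/e$ force $\tilde\pi$-integrability of $|\log(d\tilde\pi/d\psi)|$. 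The elementary renewal theorem yields $\E_{\bbQ_t}[N_t]/t\to 1/\tilde\pi(\tau)=\pi(1/\tau)$, whence $\varlimsup_{t\to+\infty} t^{-1}\H({\bf Q}_t\mid{\bf P}_t)\le\pi(1/\tau)\,\H(\tilde\pi\,|\,\psi)=\I_0(\mu)=\I(\mu)$.

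The main obstacle is really absorbed into Proposition~\ref{p:igood}-(3): that result is what constructs, for a general $\mu\in\Delta$ with an atom at $(+\infty,+\infty)$, concrete approximants $\mu_n\in\Delta_0$ whose $\I_0$-cost converges to $\I(\mu)$, and it is this construction that produces the "big jump" contribution $(1-\alpha)\xi$ in the limit. With that already in hand, the proof for $\Delta_0$ is a straightforward combination of Proposition~\ref{limit}, the data-processing inequality, Wald's identity, and the elementary renewal theorem; the only point that requires any genuine care is the integrability of $\log(d\tilde\pi/d\psi)$ needed to apply Wald.
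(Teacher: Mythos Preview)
Your proposal is correct. The reduction to $\Delta_0$ via Proposition~\ref{p:igood}-(3) and a diagonal extraction is exactly what the paper does. For $\mu\in\Delta_0$, however, you take a genuinely different route: you use the full product measure $\bbP_{\tilde\pi}$, restrict to the $\sigma$-algebra $\cF_t=\sigma(N_t,\tau_1,\ldots,\tau_{N_t})$ on which the Radon--Nikodym derivative is $\prod_{i=1}^{N_t}(d\tilde\pi/d\psi)(\tau_i)$, and then invoke Wald's identity together with the elementary renewal theorem to get $t^{-1}\H({\bf Q}_t|{\bf P}_t)\to\pi(1/\tau)\H(\tilde\pi|\psi)$. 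The paper instead tilts only the first $T_t=\lfloor\pi(1/\tau)(1+\delta)t\rfloor$ coordinates to $\tilde\pi$ and leaves the rest under $\psi$; this makes the full-sequence entropy exactly $T_t\,\H(\tilde\pi|\psi)$ with no stopping-time argument, at the price of an auxiliary parameter $\delta\downarrow 0$ and a verification (via the law of large numbers) that $\bbP^{t,\delta}(N_t\le T_t)\to 1$. Your version is slicker and avoids the extra parameter; the paper's is more elementary in that it sidesteps Wald's identity and the integrability check for $\log(d\tilde\pi/d\psi)$ under $\tilde\pi$. Both routes rely on Proposition~\ref{limit} for the convergence ${\bf Q}_t\rightharpoonup\delta_\mu$.
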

Indeed, if Proposition~\ref{oraet} is proved, then we reason as follows.
Let $\mu\in\Delta$ and let $\mc V$ be an open neighborhood of $\mu$
in the weak topology. Then
\begin{equation*}
\begin{split}
\log {\bf P}_t(\mc V) & =\log \int_\mc V \frac{d{\bf P}_t}
{d{\bf Q}_t} \, {d{\bf Q}_t} = \log \left( \frac1{{\bf Q}_t(\mc V)}\int_\mc V
\frac{d{\bf P}_t} {d{\bf Q}_t} \, {d{\bf Q}_t}\right) + \log {\bf Q}_t(\mc V)
\\ &\geq \frac1{{\bf Q}_t(\mc V)}\int_\mc V
\log \left( \frac{d{\bf P}_t} {d{\bf Q}_t}\right)  \, {d{\bf Q}_t}+ \log {\bf Q}_t(\mc V)
\end{split}
\end{equation*}
by using Jensen's inequality. Now, since $x\log x\geq -e^{-1}$ for all $x\geq 0$,
we obtain
\begin{equation*}
\begin{split}
\log {\bf P}_t(\mc V) &\geq \frac1{{\bf Q}_t(\mc V)}\left( -\H({\bf Q}_t\,|\,{\bf P}_t)+\int_{\mc V^c}
\log \left( \frac{d{\bf Q}_t} {d{\bf P}_t}\right)  \, \frac{d{\bf Q}_t} {d{\bf P}_t}\, {d{\bf P}_t}\right)+ \log {\bf Q}_t(\mc V)
\\ &\geq \frac1{{\bf Q}_t(\mc V)}\left( -\H({\bf Q}_t\,|\,{\bf P}_t)-e^{-1}
\right)+ \log {\bf Q}_t(\mc V).
\end{split}
\end{equation*}
Since $\mu\in \mc V$, ${\bf Q}_t\rightharpoonup\delta_\mu$ and $\mc V$ is open, then 
${\bf Q}_t(\mc V)\to 1$ as $t\to+\infty$. We obtain
\begin{equation*}
\varliminf_{t\to+\infty} \frac1t \, \log {\bf P}_t(\mc V)\geq 
-\varlimsup_{t\to+\infty} \frac1t  \H({\bf Q}_t \,| \, {\bf P}_t)\geq -\I(\mu).
\end{equation*}
Therefore, for any open set $\mc O$ and for any $\mu\in \mc O$
\begin{equation*}
\varliminf_{t\to+\infty} \frac1t \, \log {\bf P}_t(\mc O)\geq-\I(\mu),
\end{equation*}
and by optimizing over $\mu\in \mc O$ we have the lower bound.
\begin{proof}[Proof of Proposition~\ref{oraet}] Let us first suppose that $\mu\in\Delta_0$ as in \eqref{e:muomega0}.
Notice that $\mu(1/\tau)=\pi(1/\tau)\in\, ]0,+\infty[$.
Fix $\delta>0$ and
set $T_t:= \lfloor\pi(1/\tau)\, (1+\delta)\, t\rfloor$. For
$t>1/\pi(1/\tau)$, let us denote by $\bbP^{t,\delta}$ the law on $]0,+\infty]^{\N^*}$ such that under
$\bbP^{t,\delta}$ the sequence $(\tau_i)_{\tau\geq 1}$ is independent and
\begin{enumerate}
\item for all $i\leq T_t$, $\tau_i$ has law $\tilde\pi$
\item for all $i\geq T_t+1$, $\tau_i$ has law $\psi$.
\end{enumerate}
Let us set ${\bf Q}_{t,\delta}:=\bbP^{t,\delta}\circ\mu_t^{-1}$. Let us prove now that
\begin{equation}\label{sieg}
\lim_{\delta\downarrow 0}\lim_{t\uparrow +\infty} {\bf Q}_{t,\delta} = \delta_\mu.
\end{equation}
By the law of large numbers of Proposition~\ref{limit}, under $\bbP_{\tilde\pi}$ we have a.s.
\begin{equation*}
\lim_{t\to+\infty} \frac{S_{T_t}}t =
\lim_{t\to+\infty} \frac{S_{T_t}}{T_t} \,\frac{T_t}t=
\frac1{\pi(1/\tau)}\, {\pi(1/\tau)}\, (1+\delta)=1+\delta.
\end{equation*}
However $S_{T_t}$
has the same law under $\bbP_{\tilde\pi}$ and under $\bbP^{t,\delta}$, so we obtain
for any $\delta>0$
\begin{equation}\label{mire}
\lim_{t\to+\infty} \bbP^{t,\delta}\left(S_{T_t}\leq t\right)= 
\lim_{t\to+\infty} \bbP_{\tilde\pi}\left(\frac{S_{T_t}}t\leq 1\right)=0.
\end{equation}
Therefore, if we set
\begin{equation*}
D_{t,\delta}:=\left\{S_{T_t}>t\right\}
\end{equation*}
then, by \eqref{mire} we obtain that
for all $\delta>0$
\begin{equation}
\label{e:aa}
\lim_{t\to+\infty} \bbP^{t,\delta}\left(D_{t,\delta} \right)=1.
\end{equation}
We recall that $\{S_n>t\}=\{N_t\leq n\}$. 
Therefore on $D_{t,\delta}$ we have $N_t\leq T_t$ and therefore by \eqref{1} for any $f\in C_b(]0,+\infty]^2)$
\begin{equation*}
\bbP^{t,\delta}(|\mu_t(f)-\mu(f)|>\eps)\leq\bbP_{\tilde\pi}(\{|\mu_t(f)-\mu(f)|>\eps\}\cap D_{t,\delta})+
\bbP^{t,\delta}(D_{t,\delta}^c)
\end{equation*}
By Proposition~\ref{limit}
\begin{equation*}
\lim_{\delta\downarrow 0}\lim_{t\uparrow+\infty} \bbP_{\tilde\pi}\left(\{|\mu_t(f)-\mu(f)|>\eps\}\cap D_{t,\delta} \right)=0,
\end{equation*}
which, in view of \eqref{e:aa}, implies \eqref{sieg}.

Now we estimate the entropy
\begin{equation}
\label{e:Hbound}
 \begin{split}
 \H({\bf Q}_{t,\delta} \,| \, {\bf P}_t)  \le 
 \H\left(\bbP^{t,\delta} \, |\, \bbP_{\psi}\right)
= T_t \H(\tilde\pi \, | \, \psi),
\end{split}
\end{equation}
so that
\begin{equation*}
\lim_{\delta \downarrow 0} \varlimsup_{t\uparrow +\infty}
\frac 1t \H({\bf Q}_{t,\delta} \,| \, {\bf P}_t)  \le  \pi(1/\tau) \H(\tilde\pi \, | \, \psi).
\end{equation*}
Then there exists a map $t\mapsto \delta(t)>0$ vanishing as $t \uparrow +\infty$ such that
${\bf Q}_t:= {\bf Q}_{t,\delta(t)} \to \delta_\mu$ and $\varlimsup_t t^{-1}
\H({\bf Q}_t \,| \, {\bf P}_t) \le \I(\mu)$.

Let now $\mu\in\Delta\setminus\Delta_0$. Then, by Proposition~\ref{p:igood}-(3) (applied with $\psi_n=\psi$)
we can find a sequence $(\mu_n)_n$ in ${\Delta}_0$
such that $\mu_n\rightharpoonup\mu$ and $\varlimsup_n \I(\mu_n) \le \I(\mu)$. Moreover, we now know that there exists for all $n$
 a family ${\bf Q}^n_t$ of
probability measures on $\mc P(]0,+\infty]^2)$ such that
${\bf Q}_t^n\rightharpoonup\delta_{\mu_n}$
and
\begin{equation*}
\varlimsup_{t\to+\infty} \frac1t \, \H({\bf Q}_t^n \,| \, {\bf P}_t)\leq \I(\mu_n).
\end{equation*}
With a standard diagonal procedure we can find a family ${\bf Q}_t$ such that
${\bf Q}_t\rightharpoonup\delta_\mu$
and
\begin{equation*}
\varlimsup_{t\to+\infty} \frac1t \, \H({\bf Q}_t \,| \, {\bf P}_t)\leq \I(\mu).
\end{equation*}
\end{proof}

\section{Large deviations of $C_t/t$}
In this section we prove Theorem~\ref{t:ldcounting},
with $F \colon ]0,+\infty[ \to[0,+\infty[$ continuous and bounded, and we
set ${\tilde F}(\tau):=F(\tau)/\tau$, $\tau\in\,]0,+\infty]$.
We remark that $A_t+B_t=\tau_{N_t}$ and we define the empirical measure
$\nu_t$ of $(\tau_{N_s})_{s\geq 0}$
\begin{equation*}
\nu_t(O):= \frac 1t \int_{[0,t[}\un{O}(\tau_{N_s})\, ds=\int \un{O}(a+b)\, \mu_t(da,db), \qquad O\subset\,]0,+\infty[.
\end{equation*}
Notice that by \eqref{1}
\begin{equation}\label{1bis}
\nu_t(O) = \frac1t \sum_{i=1}^{N_t-1}  {\tau_i}\, \un{O}(\tau_i)
+ \frac{t-S_{N_t-1}}{t}\, \un{O}(\tau_{N_t}).
\end{equation}
Then
\begin{equation*}
\nu_t({\tilde F})=\nu_t(F/\tau)= \frac1t\sum_{i=1}^{N_t-1} F(\tau_i) +
\frac{1}{t}\frac{t-S_{N_t-1}}{\tau_{N_t}}\, F(\tau_{N_t}), \qquad t>0,
\end{equation*}
by the representation \eqref{1}. So that a.s.
\begin{equation}\label{r:picounting}
\left|\nu_t({\tilde F})-\frac1t\sum_{i=1}^{N_t-1} F(\tau_i)\right| \le \frac{\|F\|_\infty}t.
\end{equation}
In particular, $\frac1t \sum_{i=1}^{N_t-1} F(\tau_i)$ and $\nu_t({\tilde F})$
are {\it exponentially equivalent}, i.e.\
\begin{equation*}
\varlimsup_{t\to+\infty} \frac1t \log \bbP\left(\left|\nu_t({\tilde F})-\frac1t\sum_{i=1}^{N_t-1} F(\tau_i)\right|>\delta\right)=-\infty, \qquad \forall \, 
\delta>0.
\end{equation*}
By \cite[Theorem 4.2.13]{demzei}, if the law of $\nu_t({\tilde F})$ satisfies a large deviations principle, the same large deviation principle holds 
for the law of $\frac1t\sum_{i=1}^{N_t-1} F(\tau_i)$. Moreover we have $\nu_t({\tilde F})=\mu_t(G)$ where
\begin{equation*}
G \colon ]0,+\infty]^2 \to [0,+\infty[, \qquad 
G(a,b):={\tilde F}(a+b).
\end{equation*}
This suggests to derive
large deviations for $\frac1t\sum_{i=1}^{N_t-1} F(\tau_i)$ by using the classical contraction principle \cite[Theorem 4.2.1]{demzei} over the 
map
$\mc P(]0,+\infty]^2) \ni \mu \mapsto \mu(G) \in [0,+\infty[$. We shall start off by computing the candidate rate functional, then we
consider the case of ${\tilde F}$ bounded, and finally we show how to remove this assumption.

\subsection{The case of a bounded $\tilde F$}
In the above setting, set
\begin{equation}\label{JJ}
J(m):=\inf \big\{\I(\mu)\,:\: \mu \in \mc P(]0,+\infty]^2),\, 
                                \mu(G)=m  \big\}, \qquad m\in[0,+\infty].
\end{equation}
We compute now this rate functional.
\begin{lemma}
\label{r:IN}
Recall \eqref{Lambda_F} and \eqref{e:IN}. For each $m\in
[0,+\infty]$ we have $J(m)=J_{F}(m)$. Moreover the $\inf$ in \eqref{JJ} is attained for all finite $m$.
\end{lemma}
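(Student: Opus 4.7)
The plan is to unpack $J(m)=\inf\{\I(\mu): \mu(G)=m\}$ using the parameterization of $\Delta$, then prove the matching bounds $J(m) \geq J_F(m)$ and $J(m) \leq J_F(m)$ separately, and finally deduce attainment by compactness. Using \eqref{e:muomega0}--\eqref{e:muomega} and the conventions $0\cdot\infty=0$, $1/\infty=0$, any $\mu\in\Delta$ reads $\mu=\alpha\mu_0+(1-\alpha)\delta_{(+\infty,+\infty)}$, so $\mu(G)=\alpha\pi(F/\tau)$ and $\I(\mu)=\alpha\pi(1/\tau)\H(\tilde\pi|\psi)+(1-\alpha)\xi$. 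Since $\I=+\infty$ outside $\Delta$, the infimum in \eqref{JJ} reduces to one over $\alpha\in[0,1]$ and $\pi\in\cP(]0,+\infty[)$ with $\pi(1/\tau)<+\infty$ subject to $\alpha\pi(F/\tau)=m$.

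For the lower bound $J(m)\geq J_F(m)$, fix admissible $(\alpha,\pi)$ with $\alpha>0$ and set $\beta:=\alpha\pi(1/\tau)>0$. Applying the Donsker--Varadhan variational inequality $\H(\tilde\pi|\psi)\geq \tilde\pi(x\tau+yF)-\Lambda(x,y)$ (cf.\ Lemma~\ref{algfact}), multiplying by $\alpha\pi(1/\tau)$ and using the identities $\pi(1/\tau)\tilde\pi(\tau)=1$ and $\pi(1/\tau)\tilde\pi(F)=\pi(F/\tau)=m/\alpha$, then adding $(1-\alpha)\xi$, gives
\[
\I(\mu)\geq \alpha x + my -\beta\Lambda(x,y)+(1-\alpha)\xi, \qquad x,y\in\R.
\]
The elementary inequality $\alpha x+(1-\alpha)\xi\geq x$ holds whenever $x\leq \xi$; for $x>\xi$ one has $\Lambda(x,y)=+\infty$, so the right-hand side degenerates to $-\infty$ and the bound is trivial. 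Thus $\I(\mu)\geq x+my-\beta\Lambda(x,y)$ for all $(x,y)\in\R^2$; supremizing gives $\I(\mu)\geq \beta\Lambda^*(1/\beta,m/\beta)\geq J_F(m)$, and minimizing over $\mu$ yields the desired lower bound.

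For the upper bound $J(m)\leq J_F(m)$ together with attainment, the generic case is handled by an explicit exponential tilt: when the infimum defining $J_F(m)$ is attained at some $\beta^*>0$ and the supremum in $\sup_{x,y}\{x+my-\beta^*\Lambda(x,y)\}$ at an interior point $(x^*,y^*)$ with $x^*<\xi$, the tilted law
\[
\zeta^*(d\tau):=\frac{e^{x^*\tau+y^*F(\tau)}}{\psi(e^{x^*\tau+y^*F})}\,\psi(d\tau)
\]
satisfies the critical-point identities $\zeta^*(\tau)=1/\beta^*$, $\zeta^*(F)=m/\beta^*$, and $\H(\zeta^*|\psi)=J_F(m)/\beta^*$; choosing $\alpha=1$, $\tilde\pi=\zeta^*$ yields an explicit $\mu\in\Delta_0$ with $\mu(G)=m$ and $\I(\mu)=J_F(m)$. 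In the degenerate regime where the optimum is attained only as $x^*\uparrow\xi$ (the affine-stretch regime of Section~\ref{affine}), an approximating sequence of tilts $\zeta_n$ at parameters $x_n\uparrow\xi$ is blended with $\delta_{(+\infty,+\infty)}$ via a mixing parameter $\alpha_n<1$ chosen to preserve $\mu_n(G)=m$ exactly. Then $\varlimsup_n\I(\mu_n)\leq J_F(m)$ by construction, and by the goodness of $\I$ (Proposition~\ref{p:igood2}) together with the lower-semicontinuity in Proposition~\ref{p:igood}(2), a subsequential limit $\mu\in\Delta$ achieves $\I(\mu)\leq J_F(m)$ with $\mu(G)=m$; continuity of $\mu\mapsto \mu(G)$ on sublevels of $\I$ follows from $|G|\leq\|F\|_\infty/(a+b)$ combined with Lemma~\ref{l:3.2}.

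The main obstacle is precisely this degenerate regime: when the Legendre-dual optimum sits on the boundary $x=\xi$, the minimizer cannot be realized by a tilted probability on $]0,+\infty[$ alone, and the mass-at-infinity contribution $(1-\alpha)\delta_{(+\infty,+\infty)}$ with $\alpha<1$ becomes essential. This is exactly the phenomenon producing the affine stretches in Section~\ref{affine}, and it forces the use of $\I$ rather than $\I_0$ for a well-defined minimization.
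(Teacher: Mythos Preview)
Your lower bound $J(m)\ge J_F(m)$ is correct and essentially coincides with the paper's proof of the inequality $\ge$ in \eqref{morand}: the combination of the Donsker--Varadhan bound with the observation that $\Lambda(x,y)=+\infty$ for $x>\xi$ is exactly what the paper uses. The only omission is the trivial case $\alpha=0$, which forces $m=0$.

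The upper bound, however, has real gaps. First, your case split is not exhaustive: besides the boundary case $x^*\uparrow\xi$, you must also handle the possibilities that the infimum over $\beta$ in $J_F(m)$ is not attained, or that the supremum in $\Lambda^*$ is not attained in $y$. Second, and more seriously, the ``degenerate regime'' construction is asserted rather than carried out: you do not specify how the mixing parameter $\alpha_n$ is chosen, nor why $\varlimsup_n \I(\mu_n)\le J_F(m)$ follows. The paper avoids all this by invoking the convex-duality identity $\inf\{\H(\zeta|\psi):\zeta(\tau)=a,\,\zeta(F)=b\}=\Lambda^*(a,b)$ (Csisz\'ar, or the direct verification that $p^*=\Lambda$ hence $p=p^{**}=\Lambda^*$). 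This yields, for every $\beta>0$ and $\varepsilon>0$, a $\tilde\pi$ with $\tilde\pi(\tau)=1/\beta$, $\tilde\pi(F)=m/\beta$, and $\H(\tilde\pi|\psi)\le\Lambda^*(1/\beta,m/\beta)+\varepsilon$; taking $\alpha=1$ gives $J(m)\le\beta\Lambda^*(1/\beta,m/\beta)+\beta\varepsilon$, with no case analysis.

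Finally, your attainment argument misfires: Lemma~\ref{l:3.2} concerns the random empirical measure $\mu_t$, not deterministic sublevel sets of $\I$, so it does not yield continuity of $\mu\mapsto\mu(G)$ there. Even with the right reference (the bound \eqref{e:pibound} from the proof of Proposition~\ref{p:igood}-(1)), a uniform bound on $\mu_n(1/(a+b))$ does not by itself give uniform integrability of $G$ when $\tilde F$ is unbounded near $0$. The paper instead works directly with the parameterization $(\alpha_n,\tilde\pi_n,\beta_n)$ of a minimizing sequence, uses coercivity of the relative entropy to extract a limit $\zeta$ of $\tilde\pi_n$, and then checks the delicate point that the constraint $\zeta(\tau)=\beta$ survives in the limit via a tightness argument for $(\pi_n)$ in $\mc P(]0,+\infty])$.
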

\begin{proof} By \eqref{e:IN}
\begin{equation*}
\begin{split}
& \inf \big\{\I(\mu)\,:\: \mu \in \Delta,\, \mu(G)=m  \big\}=
 \inf \big\{\I(\mu)\,:\: \mu \ \text{as in \eqref{e:muomega0}-\eqref{e:muomega}}, \, \alpha\pi({\tilde F})=m  \big\}
\\ & =
 \inf \big\{\I(\mu)\,:\: \mu \ \text{as in \eqref{e:muomega0}-\eqref{e:muomega}}, \, \tilde\pi(\tau)=\beta, \, \tilde\pi(\tau {\tilde F})=\beta m/\alpha, 
\beta>0  \big\}
\\ &  =\inf \big\{(\alpha/\beta) \H(\tilde\pi \, | \, \psi)+(1-\alpha)\, \xi \,:\:
\tilde\pi(\tau)=\beta,\,\tilde\pi(F)=\beta m/\alpha, \alpha\in[0,1], \beta>0  
\big\},
\end{split}
\end{equation*}
where we have used that by \eqref{e:tildepi/tau}
\begin{equation*}
\pi({\tilde F})=\frac{\tilde\pi(\tau {\tilde F})}{\tilde\pi(\tau)}=
\frac{\tilde\pi(F)}{\tilde\pi(\tau)}, \qquad \pi(1/\tau)=\frac{1}{\tilde\pi(\tau)}.
\end{equation*}
Now, setting
\begin{equation*}
p(a,b):=\inf\{ \H(\zeta \, | \, \psi)\,:\:
\zeta(\tau)=a,\,\zeta(F)=b  \big\}, 
\end{equation*}
then $p\equiv\Lambda^*$ by \cite[Theorem 3]{csiszar}, where, in the notation \eqref{Lambda_F}, $\Lambda(x,y)=\log \psi(e^{x\tau+y F})$ and 
$\Lambda^*$ is the Legendre transform of $\Lambda$. Another way to check that $p\equiv\Lambda^*$ is the following: $p$ and $\Lambda^*$ 
are easily seen to be lower semicontinuous convex functions of $(a,b)$ and moreover the Legendre transform of $p$ is
\begin{equation*}
\begin{split}
p^*(x,y) & =\sup_{a,b}\left( ax+by-p(a,b)\right)=\sup_{a,b,\zeta}\left\{
ax+by- \H(\zeta \, | \, \psi)\,:\: \zeta(\tau)=a,\,\zeta( F)=b \right\}
\\ & =\sup_\zeta\left\{\zeta(x\tau+y F)- \H(\zeta \, | \, \psi) \right\}
=\log \psi(e^{x\tau+y F})=\Lambda(x,y),
\end{split}
\end{equation*}
so that $p=\Lambda^*$. Therefore
\begin{equation*}
\begin{split}
J(m) & = \inf \big\{\I(\mu),\,\mu \in \Delta,\,:\: \mu({\tilde F})=m  \big\} 
\\ & =\inf\left\{ (\alpha/\beta)\Lambda^*(\beta,\beta m/\alpha)+(1-\alpha)\, \xi,\, \alpha\in[0,1], \, \beta>0\right\}
\\ & =\inf\left\{ \beta\Lambda^*(\alpha/\beta, m/\beta)+(1-\alpha)\, \xi,\, \alpha\in[0,1], \, \beta>0\right\}.
\end{split}
\end{equation*}
We want now to prove that $J(m)=J_F(m)$, recall  \eqref{e:IN}. In particular we show that for all $\beta>0$
\begin{equation}
\label{morand}
\inf_{\alpha\in[0,1]}\left\{ \beta\Lambda^*(\alpha/\beta, m/\beta)+(1-\alpha)\, \xi \right\} = \beta\Lambda^*(1/\beta, m/\beta).
\end{equation}
First notice that the left hand side of \eqref{morand} is clearly less or equal to the right hand side by choosing $\alpha=1$. We now prove the 
converse inequality.
For all $\alpha\in[0,1]$
\begin{equation*}
\begin{split}
\beta\Lambda^*(\alpha/\beta, m/\beta)+(1-\alpha)\, \xi & =  \sup_{x,y} \left(\alpha  x+(1-\alpha)\xi+my-\beta\Lambda(x,y) \right)
\\ &
 \geq \sup_{x,y}  \left(x\wedge\xi+my-\beta\Lambda(x,y) \right).
\end{split}
\end{equation*}
Now, since $F$ is bounded, then $\Lambda(x,y)=+\infty$ for all $x>\xi$, so that
the supremum over $x$ can be restricted to a supremum over $\{x\leq \xi\}$. Therefore we obtain 
\begin{equation*}
\beta\Lambda^*(\alpha/\beta, m/\beta)+(1-\alpha)\, \xi \geq \sup_{x,y}  \left(x+my-\beta\Lambda(x,y) \right) = \beta\Lambda^*(1/\beta, m/\beta)
\end{equation*}
and \eqref{morand} is proven. 

Finally, in order to prove that the $\inf$ in \eqref{JJ} is attained, let us use the formula obtained at the beginning of the proof
\begin{equation*}
\begin{split}
& \inf \big\{\I(\mu), \mu \in \Delta\,:\: \mu(G)=m  \big\}=
\\ &
\quad   =\inf \big\{(\alpha/\beta) \H(\tilde\pi \, | \, \psi)+(1-\alpha)\, \xi, \,
\tilde\pi(\tau)=\beta,\,\tilde\pi(F)=\beta m/\alpha, \alpha\in[0,1], \beta>0  
\big\}.
\end{split}
\end{equation*}
We consider a minimizing sequence $(\alpha_n,\tilde\pi_n,\beta_n)$ and the associated $\mu_n\in\Delta$. use coercivity and lower semi-
continuity of the relative entropy and the bound $|\beta|\leq \|F\|_\infty/m$, and extract a sequence converging to $(\alpha,\zeta,\beta)$. Now 
we have to prove that the limit still satisfies the required constraint, in particular that $\zeta(\tau)=\beta$, since the rest follows easily. Let us 
notice that for all $\delta>0$
\begin{equation*}
\int_{]0,\delta]} \frac1\tau \, \pi_n(d\tau) = \frac{\tilde\pi_n(]0,\delta])}{\tilde\pi_n(\tau)}=\frac{\tilde\pi_n(]0,\delta])}\beta
\end{equation*}
and since $(\tilde\pi_n)$ is tight in $]0,+\infty[$ we obtain
\begin{equation*}
\lim_{\delta\to 0} \sup_n \int_{]0,\delta]} \frac1\tau \, \pi_n(d\tau) = 0.
\end{equation*}
It follows that $(\pi_n)_n$ is tight in $]0,+\infty]$; if $\pi_{n_k}\rightharpoonup\pi$ in $\cP(]0,+\infty])$, by a uniform integrability argument, we 
obtain that $\pi_n(1/\tau)\to\pi(1/\tau)$ and that $\zeta=\tilde\pi$, i.e.\ in particular $\pi_n\rightharpoonup\pi$. Since $\pi_n(1/\tau)=1/\tilde
\pi_n(\tau)=1/\beta$, we obtain that $\zeta(\tau)=\beta$ and the $\inf$ above is attained, so that we can reconstruct $\mu\in\Delta$ attaining 
the minimum in \eqref{JJ}.
\end{proof}

If ${\tilde F}$ is bounded, then $G$ is bounded too and we have the following
\begin{remark}\label{contrac}
The map $\mc P(]0,+\infty]^2) \ni \mu \mapsto \mu(G) \in [0,+\infty[$ is
continuous in the weak topology.
\end{remark}
\begin{proof}
Notice that ${\tilde F}$ is bounded and continuous on $]0,+\infty[$ and ${\tilde F}(\tau)=F(\tau)/\tau\to 0$ as $\tau\to+\infty$, so that it has a 
unique continuous extension to $]0,+\infty]$.
Then the map $G$ defined above
is bounded and continuous and thus $\mu \mapsto \mu(G)$ is continuous.
\end{proof}
By the contraction principle \cite[Theorem 4.2.1]{demzei}, we obtain that
the law of $\mu_t(G)$ satisfies a large deviations principle with speed $t$ and rate functional $J$ given by \eqref{JJ}, which is equal to $J_F$ 
by Lemma~\ref{r:IN}.

\subsection{The case of general $\tilde F$}
Now we remove the assumption that ${\tilde F}$ be bounded, always assuming $F$ to be bounded and continuous. In this case, the map $
\nu\mapsto \nu({\tilde F})$ is no more necessarily continuous as in Remark~\ref{contrac}.

We introduce now the approximation that will allow us to justify the
use of the classical contraction principle.
We fix $\eps>0$ and we define the processes
\begin{equation*}
S^\eps_n:=\sum_{i=1}^n \tau_i\vee \eps, \quad n\geq 1,
\qquad N^\eps_t:=\#\{n\geq 0\,:\: S^\eps_n\leq t \}=
\inf\left\{ n\geq 0\,:\: S_{n}^\eps >t\right\},
\end{equation*}
and for all $t\geq 0$
\begin{equation*}
A^\eps_t:=t-S^\eps_{N^\eps_t-1}, \qquad B^\eps_t:=S^\eps_{N^\eps_t}-t.
\end{equation*}
Define the empirical measure
\begin{equation*}
\mu^\eps_t:= \frac 1t \int_{[0,t[}\delta_{(A^\eps_s,B^\eps_s)}\, ds
\in \mc P(]0,+\infty]^2)
\end{equation*}
and denote by ${\bf P}^\eps_t$ the law of $\mu^\eps_t$. Notice that
$(S^\eps_n,N^\eps_t,A^\eps_t,B^\eps_t,\mu^\eps_t)$ under $\bbP$
have the same law as $(S_n,N_t,A_t,B_t,\mu_t)$
under $\bbP_{\psi^\eps}$ (recall \eqref{bbqphi}), where
\begin{equation*}
\psi^\eps(d\tau):= \psi(]0,\eps])\, \delta_\eps(d\tau)+\un{(\tau>\eps)}\psi(d\tau).
\end{equation*}
We denote by $\Lambda^\eps$, $\xi^\eps$ and $J_F^\eps$ the quantities defined by \eqref{Lambda_F}, \eqref{e:xiphi} and \eqref{e:IN} 
replacing $\psi$ by $\psi^\eps$ and remark that in fact $\xi=\xi^\eps$.
Then we have the following
\begin{lemma}\label{lige}
 The law of the random variable $\frac1t\sum_{i=1}^{N^\eps_t-1} F(\tau_i^\eps)$ satisfies a large deviations principle with rate $J^\eps_{F}$.
\end{lemma}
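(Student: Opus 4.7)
The plan is to reduce the lemma to the already-established large deviations principle, exploiting the distributional identity recorded just before the statement: under $\bbP$ the process $\mu^\eps_t$ has the same law as $\mu_t$ under $\bbP_{\psi^\eps}$. Since $\psi^\eps$ is itself a probability measure on $]0,+\infty[$, I would apply Theorem~\ref{t:ld1} verbatim with $\psi$ replaced by $\psi^\eps$. This yields that $({\bf P}^\eps_t)_{t>0}$ satisfies a large deviations principle at speed $t$ with good rate $\I^\eps$, defined by \eqref{e:I2} with $\psi^\eps$ in place of $\psi$.

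The key simplification brought by the truncation is that $\psi^\eps$ charges only $[\eps,+\infty[$, so the inter-arrival times $\tau_i^\eps = \tau_i\vee\eps$ are all $\geq \eps$. Consequently $A^\eps_t+B^\eps_t=\tau_{N^\eps_t}^\eps\geq\eps$ and $\mu^\eps_t$ is almost surely supported in the closed set $\{(a,b): a+b\geq \eps\}\cup\{(+\infty,+\infty)\}$. On this set the function $G(a,b):=F(a+b)/(a+b)$ is continuous and bounded by $\|F\|_\infty/\eps$, and it extends (e.g.\ by replacing $a+b$ with $(a+b)\vee\eps$) to a bounded continuous function on all of $]0,+\infty]^2$ without altering its values on the relevant support. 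With this extension the map $\mu\mapsto\mu(G)$ is weakly continuous on $\cP(]0,+\infty]^2)$, so the contraction principle \cite[Theorem 4.2.1]{demzei} provides a large deviations principle for $\mu^\eps_t(G)$ with good rate
\[
J^\eps(m):=\inf\{\I^\eps(\mu):\mu\in\cP(]0,+\infty]^2),\,\mu(G)=m\}.
\]
Applying Lemma~\ref{r:IN} with $\psi$ replaced by $\psi^\eps$ (note that $\xi^\eps=\xi$ and $F$ remains bounded) identifies $J^\eps$ with $J^\eps_F$.

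Finally, to transfer the LDP from $\mu^\eps_t(G)$ to $\frac{1}{t}\sum_{i=1}^{N^\eps_t-1}F(\tau_i^\eps)$, I would repeat the calculation leading to \eqref{r:picounting} for the modified process, obtaining
\[
\left|\mu^\eps_t(G)-\frac{1}{t}\sum_{i=1}^{N^\eps_t-1}F(\tau_i^\eps)\right|\leq\frac{\|F\|_\infty}{t},
\]
which shows exponential equivalence, so \cite[Theorem 4.2.13]{demzei} gives the conclusion. I do not anticipate a real obstacle: the entire point of introducing the truncation was precisely to force $\tilde F$ to be bounded, which is the only ingredient missing in the previous subsection's proof, so the argument reuses Theorem~\ref{t:ld1} and Lemma~\ref{r:IN} without substantial modification.
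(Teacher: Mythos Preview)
Your proposal is correct and follows essentially the same approach as the paper's own proof: apply Theorem~\ref{t:ld1} with $\psi^\eps$ in place of $\psi$, use the bounded continuous extension $F((a+b)\vee\eps)/((a+b)\vee\eps)$ (the paper writes this as $\nu\mapsto\nu(F/(\tau\vee\eps))$) to contract, invoke Lemma~\ref{r:IN} with $\psi^\eps$ to identify the rate as $J_F^\eps$, and finish with the exponential equivalence \eqref{r:picounting}. The only point worth making explicit is that the contraction rate is literally $\inf\{\I^\eps(\mu):\mu(G_\eps)=m\}$ with $G_\eps$ the extended function, and one needs the observation that $\I^\eps(\mu)<+\infty$ forces $\tilde\pi\ll\psi^\eps$ and hence $\mu$ supported in $\{a+b\geq\eps\}\cup\{(+\infty,+\infty)\}$, so that $\mu(G_\eps)=\mu(G)$ and Lemma~\ref{r:IN} applies; both your write-up and the paper leave this implicit.
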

\begin{proof}
By Theorem~\ref{t:ld1}, ${\bf P}^\eps_t$ satisfies a large deviations principle with good rate functional
\begin{equation}\label{I^gep}
\I^\eps=\begin{cases}
\alpha\,\pi(1/\tau) \H\big(\tilde\pi\, \big|\, \psi^\eps\big)   + (1-\alpha)\, \xi
           & \text{if $\mu \in \Delta$ is given by \eqref{e:muomega0}-\eqref{e:muomega} }
\\
+\infty & \text{if $\mu \notin \Delta$}.
  \end{cases}
\end{equation}
For each
  $\eps>0$, the map $\mc P(]0,+\infty]^2) \ni \nu \mapsto \nu(F/(\tau
  \vee \eps)) \in [0,\eps^{-1}]$ is continuous. Since $\mu_t(F/\tau) =
  \mu_t(F/(\tau \vee \eps))$ almost surely under $\bbP_{\psi^\eps}$, 
  Lemma~\ref{r:IN} and the classical
  contraction principle imply that the law of $\mu_t^\eps(F/\tau)$ satisfies a large
 deviations principle with speed $(t)$ and rate $J^\eps_{F}$.
  By \eqref{r:picounting}, $\mu^\eps_t(F/\tau)$ and $\frac1t\sum_{i=1}^{N^\eps_t-1} F(\tau_i^\eps)$ are
  exponentially close, so that by \cite[Theorem 4.2.13]{demzei} we obtain
  the desired result.
\end{proof}

The following lemma states that $(N_t^\eps/t)_{t>0}$ is an exponentially good approximation of $(N_t/t)_{t>0}$.
\begin{lemma}\label{close}
For all $\delta>0$
\begin{equation*}
\lim_{\eps\downarrow0} \varlimsup_{t\to+\infty} \frac1t \log \bbP(|N_t-N^\eps_t|
>t\delta)=-\infty.
\end{equation*}
\end{lemma}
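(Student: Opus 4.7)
The plan is to reduce the question to a comparison between the renewal counts and a Bernoulli-type counting random variable, and then combine exponential tightness of $N_t/t$ with a Cramér estimate for Binomials.

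First, I would observe that pointwise $\tau_i \le \tau_i\vee\eps$, hence $S_n \le S_n^\eps$ and therefore $N_t^\eps \le N_t$, so the absolute value can be dropped: one only needs to control $\bbP(N_t - N_t^\eps > t\delta)$. Let $K_n := \#\{i\le n : \tau_i\le \eps\}$, and note the identity $S_n^\eps - S_n = \sum_{i=1}^n (\eps-\tau_i)_+ \le \eps\, K_n$. The crucial deterministic inclusion is
\begin{equation*}
\{N_t - N_t^\eps \ge m\} \subset \{K_{N_t}\ge m\}, \qquad m \in \N.
\end{equation*}
Indeed, on the left event $S_{N_t^\eps+m-1} \le t$ (by monotonicity of $S_n$), while $S_{N_t^\eps+m-1}^\eps \ge S_{N_t^\eps}^\eps + (m-1)\eps > t + (m-1)\eps$; subtracting gives $\eps K_{N_t^\eps + m-1} > (m-1)\eps$, forcing $K_{N_t^\eps+m-1}\ge m$, and hence $K_{N_t}\ge m$.

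Next, for any $M>0$ I split according to whether $N_t$ exceeds $Mt$ or not:
\begin{equation*}
\bbP(N_t - N_t^\eps \ge \lfloor t\delta\rfloor) \le \bbP(N_t > Mt) + \bbP\!\left(K_{\lfloor Mt\rfloor} \ge \lfloor t\delta\rfloor\right).
\end{equation*}
The first term is handled exactly as in Lemma~\ref{l:3.2}: since $\{N_t > Mt\}\subset \{S_{\lfloor Mt\rfloor}\le t\}$, a Chebyshev--Markov estimate with the moment generating function of $-\tau_1$ yields $\bbP(N_t>Mt)\le e^{t(1+M\log c)}$ with $c:=\E(e^{-\tau_1})\in\,]0,1[$. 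For the second term, $K_{\lfloor Mt\rfloor}$ is $\mathrm{Binomial}(\lfloor Mt\rfloor, p_\eps)$ with $p_\eps := \psi(]0,\eps])$, so Cramér's theorem gives
\begin{equation*}
\varlimsup_{t\to+\infty}\frac1t\log \bbP\!\left(K_{\lfloor Mt\rfloor}\ge \lfloor t\delta\rfloor\right) \le -M\, H(\delta/M \,|\, p_\eps),
\end{equation*}
where $H(x|p) = x\log(x/p) + (1-x)\log((1-x)/(1-p))$ is the Bernoulli relative entropy.

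Combining the two bounds yields, for every $M>0$ large enough that $\delta/M < 1$,
\begin{equation*}
\varlimsup_{t\to+\infty}\frac{1}{t}\log\bbP(|N_t-N_t^\eps|>t\delta) \le \max\bigl(1+M\log c,\; -M\, H(\delta/M\,|\,p_\eps)\bigr).
\end{equation*}
The conclusion follows by sending $\eps\downarrow 0$ first (so that $p_\eps\to 0$ and $H(\delta/M\,|\,p_\eps)\to +\infty$), and then $M\to+\infty$ (so that $1+M\log c\to -\infty$). The only mildly delicate point is the deterministic inclusion $\{N_t-N_t^\eps\ge m\}\subset\{K_{N_t}\ge m\}$; once this is in hand, everything else is a routine combination of exponential tightness of $N_t/t$ and Cramér for Bernoulli sums.
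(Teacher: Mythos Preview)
Your proof is correct and takes a genuinely different route from the paper's. Both arguments drop the absolute value via $N_t^\eps \le N_t$ and split on $\{N_t > Mt\}$, handling that piece by the Markov estimate of Lemma~\ref{l:3.2}. The difference lies in the main piece. The paper observes that on $\{N_t \le Mt\}$ one has $S_n^\eps - S_n \le Mt\eps$ for all $n \le N_t$, whence $N_{t-Mt\eps} \le N_t^\eps$; thus $N_t - N_t^\eps$ is dominated by the renewal increment $N_t - N_{t-Mt\eps}$ over a window of length $Mt\eps$, which is then controlled via $\bbP(S_{\lfloor t\delta\rfloor} \le Mt\eps)$ and another Markov inequality (this step incurs a harmless $(Mt)^2$ prefactor from a double union bound over $(n,k)$). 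Your approach instead isolates the combinatorial source of the discrepancy: the deterministic inclusion $\{N_t - N_t^\eps \ge m\} \subset \{K_{N_t} \ge m\}$ pins the excess count on the number of inter-arrivals shorter than $\eps$, after which a Chernoff bound for the $\mathrm{Binomial}(\lfloor Mt\rfloor, p_\eps)$ variable $K_{\lfloor Mt\rfloor}$ finishes the job. Your route is arguably cleaner---no double sum, no polynomial prefactor---while the paper's route stays entirely within renewal-type estimates and avoids invoking Cram\'er. Both ultimately rest on $\psi(]0,\eps]) \to 0$ (equivalently, $\tau_1>0$ a.s.): in your argument this makes $H(\delta/M\,|\,p_\eps)\to+\infty$, in the paper's it makes $\E(e^{-\tau_1/\eps})\to 0$.
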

\begin{proof}
Notice that $N_t\geq N^\eps_t$. For $\delta>0$ and $M>0$
\begin{equation*}
\begin{split}
& \bbP(N_t-N^\eps_t>t\delta)\leq \sum_{n=0}^{\lfloor Mt\rfloor} 
\bbP(N_t-N^\eps_t>t\delta, \, N_t=n) + \bbP(N_t>Mt)
\\ & = \sum_{n=0}^{\lfloor Mt\rfloor} \sum_{k=0}^n
\bbP\left(N_t-N^\eps_t>t\delta, \, N_t=n, \, \sum_{i=1}^n\un{(\tau_i<\eps)}=k\right) + \bbP(S_{\lfloor Mt\rfloor}\leq t).
\end{split}
\end{equation*}
Now, for $k\leq n\leq Mt$ and $m\leq n$, on the event $\left\{N_t=n, \, N^\eps_t=m, \, \sum_{i=1}^n\un{(\tau_i<\eps)}=k\right\}$ we have that 
\begin{equation*}
t<S^\eps_{m} \leq S_{m}+k\eps \, \Longrightarrow \, S_{m}>t-k\eps \, \Longrightarrow \, N_{t-k\eps}\leq m=N^\eps_t
\end{equation*}
and finally $N_{t-Mt\eps}\leq N^\eps_t$.
Therefore
\begin{equation*}
 \bbP(N_t-N^\eps_t>t\delta)\leq \frac{(Mt)^2}2
\bbP\left(N_t-N_{t-Mt\eps}>t\delta\right) + \bbP(S_{\lfloor Mt\rfloor}\leq t).
\end{equation*}
Now, we can write for $s<t$ and $k\in\N$
\begin{equation*}
\{N_t-N_{s}> k\}=\{S_{N_s+k}\leq t\}=\{S_{N_s}+\hat S_k\leq t\}
\subset \{\hat S_k\leq t-s\}
\end{equation*}
where $(\hat S_k:=S_{N_s+k}-S_{N_s}, \, k\geq 1)$, has the same
law as $(S_k, \, k\geq 1)$. Then
\begin{equation*}
 \bbP(N_t-N^\eps_t>t\delta)\leq \frac{(Mt)^2}2
\bbP\left(S_{\lfloor t\delta\rfloor}\leq Mt\eps\right) + \bbP(S_{\lfloor Mt\rfloor}\leq t).
\end{equation*}
Therefore 
\begin{equation*}
\begin{split}
& \lim_{\eps\downarrow0} \varlimsup_{t\to+\infty} \frac1t \log \bbP(N_t-N^\eps_t>t\delta)
\\ & \leq \varlimsup_{M\to+\infty}\lim_{\eps\downarrow0} \varlimsup_{t\to+\infty} \frac1t \log \left[2\max\left\{\frac{(Mt)^2}2
\bbP\left(S_{\lfloor t\delta\rfloor}\leq Mt\eps\right), \, \bbP(S_{\lfloor Mt\rfloor}\leq t)\right\}\right]
\\ & \leq  \max\left\{\lim_{\eps\downarrow0} \varlimsup_{t\to+\infty} \frac1t \log
\bbP\left(S_{\lfloor t\delta\rfloor}\leq t\eps\right), \, \varlimsup_{M\to+\infty}\varlimsup_{t\to+\infty} \frac1t \log\bbP(S_{\lfloor Mt\rfloor}\leq t)\right\}.
\end{split}
\end{equation*}
Arguing as in the proof of Lemma~\ref{l:3.2}, by the Markov inequality
\begin{equation*}
\bbP\left(S_{\lfloor t\delta\rfloor}\leq t\eps\right)=
\bbP\left(e^{-S_{\lfloor t\delta\rfloor}/\eps}\geq e^{-t}\right)\leq e^{t+\lfloor t\delta\rfloor\log\E(e^{-\tau_1/\eps})}
\end{equation*}
so that
\begin{equation*}
\lim_{\eps\downarrow0} \varlimsup_{t\to+\infty} \frac1t \log
\bbP\left(S_{\lfloor t\delta\rfloor}\leq t\eps\right)\leq
\lim_{\eps\downarrow0}\left(1+\delta\log\E(e^{-\tau_1/\eps})\right)=-\infty,
\end{equation*}
and analogously
\begin{equation*}
\begin{split}
\varlimsup_{M\to+\infty}\varlimsup_{t\to+\infty} \frac1t \log\bbP(S_{\lfloor Mt\rfloor}\leq t)
\leq \varlimsup_{M\to+\infty}\varlimsup_{t\to+\infty} \frac1t \log e^{t+(\lfloor Mt\rfloor)\log \E(e^{-\tau_1})}=-\infty.
\end{split}
\end{equation*}
\end{proof}
Let us define
\begin{equation*}
C^\eps_t:=\sum_{i=1}^{N_t^\eps-1} F(\tau_i^\eps), \qquad t>0.
\end{equation*}
We deduce from Lemma~\ref{close} that 
\begin{lemma}\label{close2}
The process $(C_t^\eps/t)_{t>0}$ is an exponentially good approximation of the process $(C_t/t)_{t>0}$, i.e.\ for all $\delta>0$
\begin{equation}\label{demz11}
\lim_{\eps\downarrow0} \varlimsup_{t\to+\infty} \frac1t \log \bbP\left(\left|\frac1t\sum_{i=1}^{N_t^\eps-1} F(\tau_i^\eps)-\frac1t\sum_{i=1}^{N_t-1} 
F(\tau_i)\right|>\delta\right)=-\infty.
\end{equation}
\end{lemma}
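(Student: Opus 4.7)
The plan is to bound $|C_t^\eps-C_t|$ by a sum of two contributions, each of which is easy to control super-exponentially. Since $\tau_i^\eps=\tau_i\vee\eps\geq \tau_i$, we have $S_n^\eps\geq S_n$ and therefore $N_t^\eps\leq N_t$. Writing
\[
C_t^\eps-C_t=\sum_{i=1}^{N_t-1}\bigl[F(\tau_i^\eps)-F(\tau_i)\bigr]-\sum_{i=N_t^\eps}^{N_t-1}F(\tau_i^\eps),
\]
and using that $F(\tau_i^\eps)=F(\tau_i)$ on $\{\tau_i\geq \eps\}$, one obtains the pointwise estimate
\[
|C_t^\eps-C_t|\leq 2\|F\|_\infty K_t+\|F\|_\infty (N_t-N_t^\eps), \qquad K_t:=\#\{1\leq i\leq N_t-1\,:\,\tau_i<\eps\}.
\]
It thus suffices to prove super-exponential decay (in the order $\varlimsup_{t\to\infty}$ then $\lim_{\eps\downarrow 0}$) for $\bbP(K_t>t\delta')$ and $\bbP(N_t-N_t^\eps>t\delta')$, for any $\delta'>0$; combined via a union bound these yield \eqref{demz11}.

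The second tail is handled directly by Lemma~\ref{close}. For the first, I would fix $M>0$ and split
\[
\bbP(K_t>t\delta')\leq \bbP(N_t>Mt)+\bbP\Bigl(\sum_{i=1}^{\lfloor Mt\rfloor}\un{(\tau_i<\eps)}>t\delta'\Bigr),
\]
since on $\{N_t\leq Mt\}$ one has $K_t\leq \#\{i\leq \lfloor Mt\rfloor\,:\,\tau_i<\eps\}$. Exactly as in the proof of Lemma~\ref{l:3.2}, Markov's inequality applied to $e^{-S_{\lfloor Mt\rfloor}}$ yields $\bbP(N_t>Mt)\leq e^t c^{\lfloor Mt\rfloor}$ with $c:=\E(e^{-\tau_1})<1$, so $\varlimsup_t t^{-1}\log\bbP(N_t>Mt)\leq 1+M\log c$, which is arbitrarily negative for $M$ large.

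The remaining Binomial tail has parameter $p_\eps:=\psi(]0,\eps[)\downarrow 0$ as $\eps\downarrow 0$. By Chernoff's bound, for any $\lambda>0$,
\[
\varlimsup_{t\to\infty}\frac{1}{t}\log\bbP\Bigl(\sum_{i=1}^{\lfloor Mt\rfloor}\un{(\tau_i<\eps)}>t\delta'\Bigr)\leq M\log\bigl(1+p_\eps(e^\lambda-1)\bigr)-\lambda\delta',
\]
and the choice $\lambda=\log(1/p_\eps)$ gives the bound $M+p_\eps M-\delta'\log(1/p_\eps)$, which tends to $-\infty$ as $\eps\downarrow 0$. Combining, for each fixed $M>0$,
\[
\lim_{\eps\downarrow 0}\varlimsup_{t\to\infty}\frac{1}{t}\log\bbP(K_t>t\delta')\leq 1+M\log c,
\]
and since the left-hand side does not depend on $M$, letting $M\to\infty$ gives $-\infty$, as required.

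The main subtlety is the order of limits: $M$ must be introduced as an auxiliary parameter before sending $t\to\infty$ and $\eps\downarrow 0$, and only afterward sent to infinity, exploiting the fact that the resulting estimate holds for all $M$ while the quantity being estimated is independent of $M$. The decomposition above and Lemma~\ref{close} reduce all of the work to this elementary interchange-of-limits gymnastics, so no genuinely hard estimate remains.
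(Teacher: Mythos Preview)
Your proof is correct, but it takes a different route from the paper's. Both arguments use the same second ingredient, Lemma~\ref{close}, to control $(N_t-N_t^\eps)/t$. The difference lies in how the first contribution $\sum_{i}|F(\tau_i^\eps)-F(\tau_i)|$ is handled.

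The paper bounds each summand by the oscillation $\omega_F(\eps):=\sup_{\eta\in]0,\eps]}|F(\eps)-F(\eta)|$, obtaining $\tfrac1t\sum_{i=1}^{N_t^\eps-1}|F(\tau_i^\eps)-F(\tau_i)|\le \tfrac{N_t-1}{t}\,\omega_F(\eps)$, and then invokes the exponential tightness of $N_t/t$ (Lemma~\ref{l:3.2}) together with $\omega_F(\eps)\to 0$. This is shorter, but it tacitly requires $F$ to extend continuously to $0$; as stated, $F$ is only assumed bounded and continuous on $]0,+\infty[$, so $\omega_F(\eps)\to 0$ need not hold in general (think of $F(\tau)=1+\sin(1/\tau)$).

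Your approach instead counts the ``bad'' indices $K_t=\#\{i\le N_t-1:\tau_i<\eps\}$ and bounds the first sum by $2\|F\|_\infty K_t$. You then control $K_t/t$ by an auxiliary cut at level $M$ on $N_t/t$ plus a Chernoff bound on a Binomial$(\lfloor Mt\rfloor,p_\eps)$ tail, sending $\eps\downarrow 0$ and finally $M\to\infty$. This is a little more work, but it uses only the boundedness of $F$ and the fact that $p_\eps=\psi(]0,\eps[)\to 0$, so it is more robust than the paper's argument. A harmless slip: with $\lambda=\log(1/p_\eps)$ one gets $M\log(2-p_\eps)-\delta'\log(1/p_\eps)\le M\log 2-\delta'\log(1/p_\eps)$, not $M+p_\eps M-\delta'\log(1/p_\eps)$; the conclusion is of course unaffected.
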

\begin{proof} Let $\omega_F(\eps):=\sup\{|F(\eps)-F(\eta)|,\,\eta\in[0,\eps]\}$.
Since $N_t\geq N_t^\eps$,
\begin{equation*}
\begin{split} 
\left|\frac1t\sum_{i=1}^{N_t^\eps-1} F(\tau_i^\eps)-\frac1t\sum_{i=1}^{N_t-1} F(\tau_i)\right| 
& \leq \frac1t\sum_{i=1}^{N_t^\eps-1} \left|F(\tau_i^\eps)-
F(\tau_i)\right| + \frac1t\sum_{i=N_t^\eps-1}^{N_t-1} \left|
F(\tau_i)\right|
\\ & 
\leq \frac{N_t-1}t \, \omega_F(\eps)+ \frac{N_t-N_t^\eps}t\, \|F\|_\infty.
\end{split}
\end{equation*}
Since $F$ is continuous in $0$, $\omega_F(\eps)\to 0$ as $\eps\downarrow 0$ 
and by Lemmas~\ref{l:3.2}-\ref{close} we conclude.
\end{proof}

Since $(C_t^\eps/t)_{t>0}$ is an exponentially good approximation of the process $(C_t/t)_{t>0}$
by Lemma~\ref{close2}, then by Lemma~\ref{lige} and \cite[Theorem 4.2.16]{demzei} we have that $(C_t/t)$ satisfies a large deviations 
principle with rate 
\begin{equation*}
\tilde J(m):=\sup_{\delta>0} \,\varliminf_{\eps\downarrow 0} \, \inf_{z\,:\: |z-m|\leq \delta} \, J^\eps_F(z).
\end{equation*}

\begin{proof}[Proof of Theorem~\ref{t:ldcounting}]
By Lemma~\ref{close2}, $(C_t^\eps/t)_{t>0}$ is an exponentially good approximation of the process $(C_t/t)_{t>0}$, then by Lemma~\ref{lige} 
and \cite[Theorem 4.2.16]{demzei} we have that $(C_t/t)$ satisfies a large deviations principle with rate
\begin{equation*}
\sup_{\delta>0} \,\varliminf_{\eps\downarrow 0} \, \inf_{z\,:\: |z-m|\leq \delta} \, J^\eps_F(z).
\end{equation*}
which equals $J_F$ as a straightforward consequence of Proposition~\ref{p:igood}. Remark that we have also proved \eqref{IJ_F} and, still 
by 
Proposition~\ref{p:igood}-(3), \eqref{I_0J_F}.
\end{proof}

\end{document}